\newtheorem{theorem}{Theorem}[section]
\newtheorem{remark}[theorem]{Remark}
\newtheorem{lemma}[theorem]{Lemma}
\begin{document}
\textwidth 150mm \textheight 225mm
\title{Improved bounds on the $H$-rank of a mixed graph in terms of the matching number and fractional matching number \footnote{Supported in part by the National Natural Science Foundation of China (Nos. 12371348,  12201258)}}

\author{Qi Wu$^{1}$\footnote{Email: wuqimath@163.com},  Yong Lu$^{2}$\footnote{Email: luyong@jsnu.edu.cn(Corresponding author)}
\\ {\small $^{1}$ School of Mathematical Sciences, Key Laboratory of MEA (Ministry of Education) \& Shanghai}
\\ {\small Key Laboratory of PMMP, East China Normal University, Shanghai, 200241, China}
\\{\small $^{2}$ School of Mathematics and Statistics, Jiangsu Normal University, Xuzhou,
Jiangsu 221116, China}}
\date{}
\maketitle
\begin{center}
\begin{minipage}{120mm}
\vskip 0.3cm
\begin{center}
{\small {\bf Abstract}}
\end{center}
{\small A mixed graph $\widetilde{G}$ is obtained by orienting some edges of a graph $G$, where $G$ is the underlying graph of $\widetilde{G}$. Let $r(\widetilde{G})$ be the $H$-rank of $\widetilde{G}$.
Denote by $r(G)$, $\kappa(G)$, $m(G)$ and $m^{\ast}(G)$ the rank, the  number of even cycles, the matching number and the fractional matching number of $G$, respectively.   Zhou et al. [Discrete Appl. Math. 313 (2022)] proved that $2m(G)-2\kappa(G)\leq r(G)\leq 2m(G)+\rho(G)$, where $\rho(G)$ is the largest number of disjoint odd cycles in $G$.
We extend their results to the setting of mixed graphs and prove that $2m(G)-2\kappa(G)\leq r(\widetilde{G}) \leq 2m^{\ast}(G)$ for a mixed graph $\widetilde{G}$. Furthermore,  we characterize some classes of mixed graphs with rank $r(\widetilde{G})=2m(G)-2\kappa(G)$, $r(\widetilde{G})=2m(G)-2\kappa(G)+1$ and $r(\widetilde{G})=2m^{\ast}(G)$, respectively. Our results also improve those of Chen et al. [Linear Multiliear Algebra. 66 (2018)].
In addition, our results can be applied to signed graphs and oriented  graphs in some situations.

\vskip 0.1in \noindent {\bf Key Words}: \ Mixed graph; $H$-rank; Matching number; Fractional matching number. \vskip
0.1in \noindent {\bf AMS Subject Classification (2010)}: \ 05C35; 05C50. }
\end{minipage}
\end{center}

\section{Introduction }

In this paper, all graphs are finite and simple (no multiedges or loops). For a graph $G$ with vertex set $V(G) = \{v_1, \dots, v_n\}$ and edge set $E(G)$, its \textit{adjacency matrix} $A(G) = (a_{ij})_{n \times n}$ is defined as follows:
$a_{ij}=1$ if $v_{i}$ is adjacent to $v_{j}$ and $a_{ij}=0$ otherwise.
The \textit{rank} $r(G)$ and \textit{nullity} $\eta(G)$ of $G$ are the rank and nullity of $A(G)$, respectively. Standard notations $P_n$, $C_n$, and $K_n$ denote a \emph{path}, \emph{cycle}, and \emph{complete graph} of order $n$.

A graph $G$ is called \textit{singular} if its nullity $\eta(G)> 0$. The nullity of graphs plays an important role in the H$\mathrm{\ddot{u}}$ckel molecular orbital model. For a molecular graph $G$, if $\eta(G)>0$, then the corresponding chemical compound is highly reactive and unstable, or nonexistent (see \cite{AP, CIG}). In 1957, Collatz and Sinogowitz \cite{CS} first posed the challenging open problem of characterizing all singular graphs, which remains unsolved. There are lots of studies  on  the nullities (or ranks) of graphs (see \cite{BBD, CL,  LGUO, MWTDAM, SST, WW, ZWS, zwt}).

Let $G$ be a graph. The \emph{degree} of a vertex $x$ in $G$, denoted by $d_G(x)$, as the number of its neighbors in $G$. A vertex with degree $1$ is called a \emph{pendant vertex}, and its unique neighbor is referred to as a \emph{quasi-pendant vertex}.
A \emph{pendant cycle} of $G$ is a cycle which has unique vertex of degree $3$ and each other vertex has degree $2$ in $G$. A \emph{connected component} of $G$ is a maximal connected subgraph of $G$, and the number of such components is denoted by $\omega(G)$. The \emph{cyclomatic number} of  $G$,  denoted by $c(G)$, is defined as $c(G)=|E(G)|-|V (G)|+\omega(G)$. We call $G$  \emph{acyclic}, \emph{unicyclic} and \emph{bicyclic} if $c(G)=0$, $c(G)=1$ and $c(G)=2$, respectively. For a graph $G$ whose distinct cycles (if any) are pairwise vertex-disjoint,  we use $T_{G}$ to denote an acyclic
graph obtained from $G$ by contracting each cycle of $G$ into a vertex, called a \emph{cyclic vertex}.
The resulting acyclic graph after removing all cyclic vertices from $T_G$ is denoted by $[T_G]$.

A \emph{matching} of  $G$ is a set of  edges of $G$ such that any two edges in it are not incident. A matching with the maximum possible number of edges is a \emph{maximum matching} of $G$. The \emph{matching number} of  $G$, denoted by $m(G)$, is the size of a maximum matching of $G$. If all vertices of $G$ are endpoints of the edges of a matching of $G$, we call this matching  a \emph{perfect matching} of $G$. A \emph{fractional matching} of $G$ is defined as a function $f$ that assigns a weight to each edge within the closed interval $[0,1]$, satisfying $\sum_{e \in \Gamma(v)} f(e) \leq 1$ for every vertex $v \in V(G)$, where $\Gamma(v)$ represents the set of edges incident to $v$.
The \emph{fractional matching number} of $G$, denoted $m^{\ast}(G)$, is the maximum value of $\sum_{e \in E(G)} f(e)$ over all fractional matchings of $G$. It is straightforward to observe that $m^{\ast}(G) \geq m(G)$.
From the inequality $\sum_{e \in \Gamma(v)} f(e) \leq 1$ for each $v \in V(G)$, we deduce that $\sum_{e \in E(G)} f(e) = \frac{1}{2} \sum_{v \in V(G)} \sum_{e \in \Gamma(v)} f(e)$. Substituting the bound $\sum_{e \in \Gamma(v)} f(e) \leq 1$, this implies $\sum_{e \in E(G)} f(e) \leq \frac{1}{2} \sum_{v \in V(G)} 1 = \frac{n}{2}$ (with $n = |V(G)|$), so $m^{\ast}(G) \leq \frac{n}{2}$.
A \emph{fractional perfect matching} in $G$ is a fractional matching $f$ for which $\sum_{e \in E(G)} f(e) = \frac{n}{2}$; in this scenario, $m^{\ast}(G) = \frac{n}{2}$. If a fractional perfect matching takes only the values $0$ or $1$, it is equivalent to a (classical) perfect matching of $G$. Denote by $\kappa(G)$ the number of even cycles of graph $G$.



 In 2009, Guo~et~al.~[12] established that for a unicyclic graph $G$, $r(G) \in \{2m(G)-2, 2m(G), 2m(G)+1\}$. Wang and Wong \cite{WW} extended Guo's results to arbitrary graphs $G$: $ 2m(G) - 2c(G) \leq r(G) \leq 2m(G) + c(G)$. Subsequent improvements to these bounds include:
\begin{itemize}
    \item Feng~et~al.~\cite{FHLL}: $2m(G) - 2c(G) \leq r(G) \leq 2m(G) + N_0$ where $N_0$ is the number of  odd cycles of $G$;
    \item Ma and Fang~\cite{MF}: $r(G) \leq 2m(G) + N_1$ with $N_1$ being the minimum edge-deletion size to make $G$  non-bipartite;
    \item Zhou~et~al.~\cite{ZWT}: $2m(G) - 2\kappa(G) \leq r(G) \leq 2m(G) + \rho(G)$ where $\rho(G)$ is the maximum number of disjoint odd cycles;
    \item Chen and Guo~\cite{CG0}: $2m^*(G) - 2c(G) \leq r(G) \leq 2m^*(G)$.
\end{itemize}

A \emph{mixed graph} $\widetilde{G}$ is formed by assigning directions to a subset of edges in an undirected graph $G$, which serves as the \emph{underlying graph} of $\widetilde{G}$. For such a mixed graph $\widetilde{G}$ of order $n$, its Hermitian adjacency matrix is a matrix $H(\widetilde{G}) = (h_{uv})_{n \times n}$, with entries defined as follows:
$h_{uv} = 1$ if $uv$ is an undirected edge;
$h_{uv} = i$ if there is an arc from $u$ to $v$;
$h_{uv} = -i$ if there is an arc from $v$ to $u$ and
$h_{uv} = 0$ otherwise.
The \textit{H-rank} of $\widetilde{G}$, denoted by $r(\widetilde{G})$, is the rank of  $H(\widetilde{G})$.

 Recent work has extended the study of  rank (or nullity) of graphs to mixed graphs. Mohar \cite{M} characterized all  mixed graphs with $H$-rank equal to 2. Wang et al. \cite{WYL} identified all mixed graphs with $H$-rank 3. Yang et al. \cite{YWY} gave some mixed graphs with $H$-rank 4, 6 or 8.  Chen et al. \cite{CLZ} established the relationship between the $H$-rank of a mixed graph and the rank of its underlying graph.  Li et al. \cite{LZX} derived bounds for the $H$-rank of a mixed graph using the independence number. Wei et al. \cite{WLM} studied the relationship between the $H$-rank of a mixed graph and the maximum degree of its underlying graph. The relationship between the $H$-rank of a mixed graph and the girth of its underlying graph was established by Khan \cite{K}.

A \emph{Sachs subgraph} of a mixed graph $\widetilde{G}$ is a mixed spanning subgraph with only $K_{2}$ or mixed cycles as components. For a given Sachs subgraph $\widetilde{S}$ and a chosen orientation of its cycles, the \emph{signature} of $\widetilde{S}$, denoted by $\sigma(\widetilde{S})$, is defined as $|f - b|$. Here, $f$ represents the number of forward-oriented edges and $b$ denotes the number of backward-oriented edges within the mixed cycles of $\widetilde{S}$. A Sachs subgraph $\widetilde{S}$ is termed \emph{basic} if for every cycle $C$ in $S$, the signature $\sigma(C)$ is an even integer.

Chen et al. \cite{CHL} studied the relation between the rank of  a mixed graph and the matching number of its underlying graph.
They obtained that if $\widetilde{G}$ is a mixed graph, then
\begin{equation}\label{eq:1}
2m(G)-2c(G)\leq r(\widetilde{G})\leq 2m(G)+c(G).
\end{equation}
Moreover, they characterized all mixed graphs $\widetilde{G}$ with rank $r(\widetilde{G})=2m(G)-2c(G)$.
\noindent\begin{theorem}\label{th:1.1}\cite{CHL}
Let $\widetilde{G}$ be a mixed graph. Then $r(\widetilde{G})=2m(G)-2c(G)$ if and only if the following conditions all hold:
\begin{enumerate}[(a)]
\item any two  cycles (if any) of $\widetilde{G}$ share no common vertices;
\item  each  cycle $\widetilde{C}_{q}$ of $\widetilde{G}$ satisfies  $\sigma(\widetilde{C}_{q})\equiv q~(\mathrm{mod}~4)$;
\item  $m(T_G)=m([T_G])$.
\end{enumerate}
\end{theorem}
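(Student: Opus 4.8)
I would prove the two implications together by induction on $n=|V(\widetilde G)|$, driven by a short toolkit. (i)~Deleting an isolated vertex changes none of $r(\widetilde G)$, $m(G)$, $c(G)$ or conditions (a)--(c). (ii)~If $y$ is a pendant vertex with neighbour $x$ and $x$ lies on no cycle, then expanding $H(\widetilde G)$ and each of its principal submatrices along the row of $y$ gives $r(\widetilde G)=r(\widetilde G-x-y)+2$, while $m(G)=m(G-x-y)+1$, $c(G)=c(G-x-y)$, $T_{G-x-y}=T_G-x-y$, $m(T_{G-x-y})=m(T_G)-1$ and $m([T_{G-x-y}])=m([T_G])-1$. (iii)~The crude bounds $r(\widetilde G-v)\le r(\widetilde G)\le r(\widetilde G-v)+2$ and the analogues for edge deletion. (iv)~For an isolated mixed cycle, $r(\widetilde C_q)=2m(C_q)-2c(C_q)$ if and only if $q$ is even and $\sigma(\widetilde C_q)\equiv q\pmod 4$, in which case $r(\widetilde C_q)=q-2$; otherwise $r(\widetilde C_q)$ strictly exceeds $2m(C_q)-2c(C_q)$. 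I would obtain (iv) from the Sachs-type expansion of the Hermitian characteristic polynomial, or from the eigenvalues of $\widetilde C_q$ after a unitary switching that puts all of the orientation on one edge. Finally a preliminary observation rules out ``bad'' pendants: if $y$ is pendant at $x$ with $x$ on a cycle, then using $r(\widetilde G-x-y)=r(\widetilde G)-2$, $m(G-x-y)=m(G)-1$, $c(G-x-y)\le c(G)-1$ and the lower bound in \eqref{eq:1} for $\widetilde G-x-y$ one gets $r(\widetilde G)>2m(G)-2c(G)$, and one also checks $m(T_G)>m([T_G])$. So in both implications such a pendant cannot occur, and (ii) always applies to a pendant vertex.

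For \emph{sufficiency}, assume (a), (b), (c) and remove isolated vertices by (i). While a pendant vertex survives, apply (ii): its neighbour is off every cycle by the preliminary observation, so $\widetilde G-x-y$ still satisfies (a) and (b) (no cycle dies, no signature changes) and (c) (by the identities in (ii)), while $2m(G)-2c(G)=\bigl(2m(G-x-y)-2c(G-x-y)\bigr)+2$, so the inductive hypothesis closes the step. When no pendant remains, every vertex of $G$ has degree at least $2$, and (a) together with (c) forces $G$ to be a disjoint union of mixed cycles: a component carrying two vertex-disjoint cycles, or a cycle with an attached path, would give $m(T_G)>m([T_G])$. Now (b), which in view of (iv) amounts to each of these cycles being even with $\sigma\equiv q\pmod 4$, makes every cycle tight, so $r(\widetilde G)=\sum_i(q_i-2)=2m(G)-2c(G)$.

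For \emph{necessity}, assume $r(\widetilde G)=2m(G)-2c(G)$. If some $v$ lies on two distinct cycles, then $c(G-v)\le c(G)-2$ while $m(G-v)\ge m(G)-1$, so (iii) together with the lower bound in \eqref{eq:1} for $\widetilde G-v$ gives $r(\widetilde G)\ge r(\widetilde G-v)\ge 2m(G-v)-2c(G-v)\ge 2m(G)-2c(G)+2$, a contradiction; hence (a) holds and $T_G$, $[T_G]$ exist. Now prune pendant vertices via (ii) (legitimate by the preliminary observation): this preserves $r=2m-2c$, deletes no cycle, and — by the identities in (ii) — preserves the equality $m(T_G)=m([T_G])$ in both directions. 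We arrive at a disjoint union of mixed cycles with $r=2m-2c$; by (iv) each of them is even with $\sigma\equiv q\pmod 4$, which (as no cycle was pruned) gives (b) for $\widetilde G$; and there $m(T_G)=m([T_G])=0$, so by the preservation under pruning (c) holds for $\widetilde G$.

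The step I expect to be the real obstacle is the necessity of (a). The estimate $c(G-v)\le c(G)-2$ for a vertex on two distinct cycles is routine, but turning the bound $r(\widetilde G-v)\ge 2m(G-v)-2c(G-v)$ into the strict gain I need is the delicate point: I would first reduce, via (ii) and (iii), to the case where $\widetilde G$ is assembled from mixed cycles that meet only at such vertices, and then run the case analysis on the parities and signatures of the two cycles through $v$ and on whether they share one vertex or several. Funnelling everything into the core case, where $\widetilde G$ is a disjoint union of mixed cycles and (iv) applies to each component, confines the remaining work to matching-versus-cyclomatic arithmetic.
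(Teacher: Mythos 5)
Note first that the paper contains no proof of Theorem \ref{th:1.1}: it is quoted from \cite{CHL}, so your proposal has to stand on its own. Your sufficiency direction is essentially workable (the pruning step and the reduction, under (a) and (c), of a pendant-free graph to a disjoint union of cycles via a Lemma \ref{le:2.11}-type argument are correct, if tersely justified). The necessity direction, however, has two genuine gaps. The smaller one: your estimate $c(G-v)\le c(G)-2$ for an \emph{arbitrary} vertex $v$ lying on two distinct cycles is false. In a theta graph an internal degree-$2$ vertex of one of the three paths lies on two cycles, yet $G-v$ is connected and $c(G-v)=c(G)-1$; your chain then yields only $r(\widetilde G)\ge 2m(G)-2c(G)$, which is no contradiction. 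The step is repairable — choose $v\in V(C_1)\cap V(C_2)$ incident with an edge of $C_2$ not on $C_1$ (such a $v$ exists and has degree at least $3$), and a count of the components of $G-v$ meeting $N(v)$ gives $c(G-v)\le c(G)-2$ — but you call the estimate routine and your fallback (``reduce to graphs assembled from mixed cycles and do a case analysis'') is not an argument.

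The larger gap is in the pruning endgame. After deleting pendant $K_2$'s you assert that what remains is a disjoint union of mixed cycles. A pendant-free graph with pairwise vertex-disjoint cycles need not have this form: two vertex-disjoint cycles joined by a path (or any configuration of cycles linked by a tree structure with minimum degree $2$) has no pendant vertex, so your pruning does nothing to it. Nothing in your proposal shows that such a graph cannot satisfy $r(\widetilde G)=2m(G)-2c(G)$, and since you derive both (b) and (c) for $\widetilde G$ by transporting them back from the disjoint-cycles end state, the necessity of (b) and of (c) collapses without this case. This is precisely where condition (c) earns its place, and handling it is the bulk of the real proof: one needs a pendant-cycle reduction in the spirit of Lemma \ref{le:2.8} (remove a leaf-block cycle, or a neighbour of its unique branch vertex, and re-prune) combined with the tree-matching fact of Lemma \ref{le:2.11}, which is how \cite{CHL} and the analogous arguments of this paper (Lemmas \ref{le:4.10}, \ref{le:4.11}, \ref{le:4.13}) proceed. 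A final minor remark: as printed, condition (b) does not require the cycles to be even, and literal sufficiency fails for a single odd mixed cycle with odd $\sigma(\widetilde C_q)\equiv q\ (\mathrm{mod}\ 4)$, where $r=q-1>q-3=2m-2c$; your reading of (b) through your item (iv) — every cycle even with $\sigma(\widetilde C_q)\equiv q\ (\mathrm{mod}\ 4)$ — is the intended statement of \cite{CHL}, and that discrepancy is the paper's, not yours.
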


 He et al. \cite{HHLG} and Chen and Guo \cite{CG1} proved that there is no mixed graph $\widetilde{G}$ with $H$-rank $r(\widetilde{G})=2m(G)-2c(G)+1$, respectively.

A \emph{signed graph} is a graph with a sign attached to each of its edges. Formally, a signed graph $\Gamma=(G, \sigma)$ consists of the underlying graph $G$ of $\Gamma$, and a sign function $\sigma: E\rightarrow\{+, -\}$. The \emph{adjacency matrix}  of $\Gamma$ is $A(\Gamma)=(a^{\sigma}_{ij})=\sigma(v_{i}v_{j})a_{ij}$, where $a_{ij}$ is an element of the adjacency matrix $A(G)$ of the underlying graph $G$. The \emph{rank} $r(\Gamma)$  of a signed graph $\Gamma$ is defined as the rank  of $A(\Gamma)$.

Let $\Gamma$ be a signed graph and $\widetilde{G}$ be a  mixed graph with the same underlying graph $G$. If $\widetilde{G}$ has  no directed edges and $\sigma=+$ (or $\sigma=-$) for $\Gamma$, then  $H(\widetilde{G})=A(\Gamma)=A(G)$ (or $H(\widetilde{G})=-A(\Gamma)=A(G)$). Hence, $H(\widetilde{G})$ and $A(\Gamma)$ have the same rank. If $\widetilde{G}$ has some directed edges, then by the definitions of $H(\widetilde{G})$ and $A(\Gamma)$,  there is no necessary connection between their ranks,  no matter whether $\sigma=+$,  $\sigma=-$, or $\sigma\in\{+, -\}$ in $\Gamma$. Hence the results about rank of signed graph and those of  mixed graph cannot be deduced from each other.

He et al.~\cite{HHL} gave bounds on the rank of a signed graph in terms of
matching number: $2m(G)-2c(G)\leq r(\Gamma)\leq 2m(G)+c(G)$. Chen and Guo \cite{CG} improved their results and obtained that $2m(G)-2\kappa(G)\leq r(\Gamma)\leq 2m^{\ast}(G)$.

Inspired by their results and building on it,   we improve the bounds of Inequality (1) and prove that $2m(G)-2\kappa(G)\leq r(\widetilde{G})\leq 2m^{\ast}(G)$ for a mixed graph $\widetilde{G}$. Furthermore, we characterize some classes of mixed graphs with rank $r(\widetilde{G})=2m(G)-2\kappa(G)$, $r(\widetilde{G})=2m(G)-2\kappa(G)+1$ and $r(\widetilde{G})=2m^{\ast}(G)$, respectively. Below are our main results.

\noindent\begin{theorem}\label{th:1.2}
Let $\widetilde{G}$ be a mixed graph. Then
\begin{equation}\label{eq:2}
2m(G)-2\kappa(G)\leq r(\widetilde{G})\leq 2m^{\ast}(G).
\end{equation}
\end{theorem}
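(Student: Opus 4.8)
The plan is to prove the two inequalities separately; the lower bound is the substantial one.

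\emph{The upper bound $r(\widetilde G)\le 2m^{\ast}(G)$.} I would deduce this from the linear-programming description of the fractional matching number. By LP duality together with the half-integrality of the fractional vertex-cover polytope,
\[
2m^{\ast}(G)=n-\max_{S\subseteq V(G)}\bigl(i(G-S)-|S|\bigr),
\]
where $n=|V(G)|$ and $i(G-S)$ is the number of isolated vertices of $G-S$. Fix a set $S$ attaining the maximum and let $I$ be the set of isolated vertices of $G-S$, so that $N_G(v)\subseteq S$ for every $v\in I$. Then each of the $|I|$ rows of $H(\widetilde G)$ indexed by $I$ is supported on the coordinate set $S$, hence these rows span a subspace of dimension at most $|S|$, while the remaining $n-|I|$ rows span a subspace of dimension at most $n-|I|$. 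Adding, $r(\widetilde G)\le |S|+n-|I|=n-\bigl(i(G-S)-|S|\bigr)=2m^{\ast}(G)$. (Equivalently, in the Sachs-type expansion of $\det(xI-H(\widetilde G))$, a nonzero coefficient of $x^{\,n-k}$ forces a Sachs subgraph on $k$ vertices; weighting its $K_2$-components by $1$ and its cycle-edges by $\tfrac12$ gives a fractional matching of weight $k/2$, so $k\le 2m^{\ast}(G)$.)

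\emph{The lower bound $r(\widetilde G)\ge 2m(G)-2\kappa(G)$: reductions.} I would induct on $n$. Since $r$, $m$ and $\kappa$ are additive over connected components, assume $\widetilde G$ connected; isolated vertices are trivial. If $\widetilde G$ has a pendant vertex $v$ with neighbour $u$, the standard elimination identity $r(\widetilde G)=r(\widetilde G-u-v)+2$ for Hermitian adjacency matrices, together with $m(G)=m(G-u-v)+1$ and $\kappa(G-u-v)\le\kappa(G)$, gives $r(\widetilde G)\ge 2m(G-u-v)-2\kappa(G-u-v)+2\ge 2m(G)-2\kappa(G)$ by the inductive hypothesis. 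So assume $\delta(G)\ge 2$, whence $G$ has a cycle. If $G$ is a single cycle $C_k$, the spectrum of $H(\widetilde{C}_k)$ is known explicitly and yields $\eta(\widetilde{C}_k)\le 2$ for $k$ even and $\eta(\widetilde{C}_k)\le 1$ for $k$ odd, i.e. $r(\widetilde{C}_k)\ge 2m(C_k)-2\kappa(C_k)$. Otherwise $c(G)\ge 2$. If some vertex $v$ is missed by a maximum matching (so $m(G-v)=m(G)$), then, since $\widetilde G-v$ is a principal submatrix of $H(\widetilde G)$ and $\kappa(G-v)\le\kappa(G)$, the inductive hypothesis gives $r(\widetilde G)\ge r(\widetilde G-v)\ge 2m(G-v)-2\kappa(G-v)=2m(G)-2\kappa(G-v)\ge 2m(G)-2\kappa(G)$.

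\emph{The lower bound: the main case.} What remains is $\delta(G)\ge 2$, $c(G)\ge 2$, and $G$ has a perfect matching (equivalently, every vertex is covered by every maximum matching), so vertex deletion always loses one unit of matching number and a direct determinantal argument is required. The key point is that $H(\widetilde G)$ is an integer matrix and that, in the determinant expansion of any principal submatrix $H(\widetilde G)[W]$, the terms whose permutation has a cycle of length $\ge 3$ cancel in pairs up to an even integer: a term and the term obtained by reversing all such long cycles have the same sign and conjugate products, so their sum is $\pm\,2\cos\!\bigl(\tfrac{\pi}{2}(f-b)\bigr)\in\{0,\pm 2\}$ for each long cycle (with $f,b$ the numbers of forward/backward arcs along it) times real factors $1$ coming from the $K_2$'s. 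Consequently
\[
\det H(\widetilde G)[W]\;\equiv\;\#\{\text{perfect matchings of }G[W]\}\pmod 2 .
\]
It therefore suffices to produce $W\subseteq V(G)$ with $|W|=2m(G)-2\kappa(G)$ such that $G[W]$ has an \emph{odd} number of perfect matchings, for then $\det H(\widetilde G)[W]$ is odd, hence nonzero, so $r(\widetilde G)\ge |W|$.

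\emph{The main obstacle.} The construction of such a $W$ is, I expect, where the real work lies: it is a purely combinatorial statement about $G$ (independent of the orientation) which I would prove by the structural analysis underlying Theorem~\ref{th:1.1} — splitting according to whether the cycles of $G$ are pairwise vertex-disjoint, deleting in the disjoint case one carefully chosen vertex (and its mate) from each even cycle while tracking the contracted trees $T_G$ and $[T_G]$ so that exactly one perfect matching of $G[W]$ survives, and exploiting in the non-disjoint case the extra even cycles that intersecting cycles create (so that $\kappa(G)$ is large enough to absorb the deletions). Precisely because the displayed congruence holds verbatim for $H(\widetilde G)$ — the cycle contributions to determinants being even no matter how the edges are oriented — this combinatorial core is exactly the one already needed for signed graphs by Chen and Guo, which is what makes the passage to mixed graphs possible; the genuinely new content is organizing the orientation-dependent part into the single parity statement above.
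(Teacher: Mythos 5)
Your upper bound is correct, and it is a genuinely different route from the paper's: you use the fractional Berge--Tutte formula $2m^{\ast}(G)=n-\max_{S}\bigl(i(G-S)-|S|\bigr)$ together with a row-support count for $H(\widetilde{G})$, whereas the paper reads off a basic (Sachs) subgraph of order $r(\widetilde{G})$ from the nonvanishing coefficient $a_{r}$ of the characteristic polynomial and weights its $K_2$'s by $1$ and its cycle edges by $\tfrac12$ (essentially your parenthetical remark). Both are short and correct; yours has the mild cost of invoking LP duality/half-integrality, the paper's the mild cost of the coefficient formula.

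The lower bound, however, has a genuine gap. Your reductions (components, pendant $K_2$'s, single cycles, vertices missed by a maximum matching) are fine, and the parity congruence $\det H(\widetilde{G})[W]\equiv\#\{\text{perfect matchings of }G[W]\}\pmod 2$ is correct --- it is exactly the mechanism of the paper's Lemma \ref{le:3.2} --- but the whole argument then rests on producing a set $W$ with $|W|=2m(G)-2\kappa(G)$ such that $G[W]$ has an odd number of perfect matchings, and you never construct it: you explicitly defer it to ``the structural analysis underlying Theorem \ref{th:1.1}'', which is both uncarried-out and much heavier than what is needed. The two missing ingredients that close this are simple. First, if $u$ lies on an even cycle then $\kappa(G-u)\leq\kappa(G)-1$ while $m(G-u)\geq m(G)-1$ and $r(\widetilde{G})\geq r(\widetilde{G}-u)$ (Lemmas \ref{le:2.1} and \ref{le:2.6}), so one can induct directly on $\kappa(G)$ and eliminate all even cycles one vertex at a time; in particular, in your ``main case'' with a perfect matching, any $\kappa(G)\geq 1$ is disposed of instantly by this deletion, so the elaborate case analysis you envisage never arises. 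Second, in the base case $\kappa(G)=0$, the subgraph induced by the endpoints of a maximum matching is even-cycle-free and has a perfect matching, and such a graph has a \emph{unique} maximum matching (the paper's Lemma \ref{le:3.1}, quoted from Chen--Guo); this is precisely what makes the coefficient $a_{2m}$ odd in Lemma \ref{le:3.2}, i.e.\ it supplies the odd perfect-matching count your parity reduction needs. Without these two facts the ``main obstacle'' you name is still open in your write-up, so the proof of the lower bound is incomplete as it stands.
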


When $0\leq \kappa(G)\leq c(G)$, $2m(G)-2c(G)\leq 2m(G)-2\kappa(G)$. According to Remark 2 and Lemma 3.1 in \cite{CG},  $2m^{\ast}(G)\leq 2m(G)+c(G)$ and  $2m^{\ast}(G)\leq 2m(G)+\rho(G)$. Thus the lower bound and upper bound of Inequality  (2) improve those of Inequality  (1) and the results of Zhou et al. \cite{ZWT}.

Let $C_{p}$ and $C_{q}$ be two vertex-disjoint cycles and $v\in V (C_{p}), u\in V (C_{q})$, $P_{l}=v_{1}v_{2}\ldots v_{l}~(l\geq 1)$ be a path of length $l-1$. Let $\infty(p, l, q)$ (as shown in Fig. 1) be
the graph obtained from $C_{p}, C_{q}$ and $P_{l}$ by identifying $v$ with $v_{1}$ and $u$ with $v_{l}$, respectively.  When $l=1$, the graph $\infty(p, 1, q)$ (as shown in Fig. 1) is obtained from $C_{p}$ and $C_{q}$ by identifying $v$ with $u$.

Let $P_{p}, P_{l}, P_{q}$ be three paths, where $\min\{p, l, q\}\geq 2$ and at most one of $p, l, q$
is $2$. Let $\theta(p, l, q)$ (as shown in Fig. 1) be the graph obtained from $P_{p}$, $P_{l}$ and $P_{q}$ by
identifying the three initial vertices and terminal vertices.

\begin{figure}[htbp]
\centering
 \includegraphics[scale=0.8]{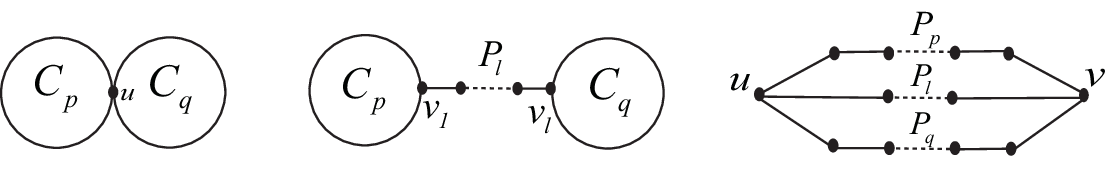}
\caption{$\infty(p, 1, q)$, $\infty(p, l, q)$ and $\theta(p, l, q)$.}
\end{figure}

Let $G$ be a graph and $S$ be a non-empty subset of its vertex set $V(G)$. The notation $G - S$ represents the induced subgraph formed by removing all vertices in $S$ and their incident edges from $G$. When $S$ consists of a single vertex $x$, we simplify the notation to $G - x$. Similarly, for any subgraph $H$ of $G$, we use $G - H$ to denote the subgraph obtained from $G$ by deleting all vertices of $H$ and their incident edges.
Denote by $H + x$  the subgraph induced by the vertex set $V(H) \cup \{x\}$.

Let $\mathfrak{G}_{0}$ be the set of connected mixed graphs $\widetilde{G}$~($c(G)\geq 1$) with pairwise vertex-disjoint cycles which satisfies the following properties: (a) $\widetilde{G}$ containing no pendant vertices; (b)  $\widetilde{G}$ has a pendant odd cycle $\widetilde{C}_{q}$ with odd signature; (c) $r(\widetilde{H})=2m(H)-2c(H)$~(i.e. $\widetilde{H}$ satisfies three conditions in Theorem \ref{th:1.1}), where $\widetilde{H}=\widetilde{G}-V(C_{q})+x$ and $x$ is the unique vertex of degree $3$ in $C_q$.

For an even integer $p$ and two odd integers $q$ and $l$, we define the following four sets:
\begin{itemize}
\item $\mathfrak{X}_{1}$: the set of mixed bicyclic graphs $\widetilde{\infty}(p,1,q)$ whose odd cycle  has odd signature and even cycle $\widetilde{C}_{p}$  satisfies $\sigma(\widetilde{C}_{p})\equiv p~(\mathrm{mod}~4)$;

\item  $\mathfrak{X}_{2}$: the set of mixed bicyclic graphs $\widetilde{\theta}(p,l,q)$ whose two odd cycles have odd signatures and  even cycle $\widetilde{C}_{q+l-2}$ satisfies $q+l+\sigma(\widetilde{C}_{q+l-2})\equiv 2~(\mathrm{mod}~ 4)$;

    \item $\mathfrak{Y}_{1}$: the set of mixed bicyclic graphs $\widetilde{\infty}(p,1,q)$ whose odd cycle  has even signature and even cycle $\widetilde{C}_{p}$  satisfies $\sigma(\widetilde{C}_{p})\equiv p~(\mathrm{mod}~4)$;

    \item $\mathfrak{Y}_{2}$: the set of mixed bicyclic graphs $\widetilde{\theta}(p,l,q)$ whose two odd cycles have even signatures and even cycle $\widetilde{C}_{q+l-2}$ satisfies $q+l+\sigma(\widetilde{C}_{q+l-2})\equiv 2~(\mathrm{mod}~ 4)$.
    \end{itemize}

Let $\mathfrak{G}_{1}=\mathfrak{G}_{0}\cup \mathfrak{X}_{1} \cup \mathfrak{X}_{2}$.  Let $\mathfrak{G}_{2}$ be the set consisting of $\mathfrak{G}_{1}$ and mixed odd cycles with odd signature.

If $u$ is a pendant vertex of a mixed graph $\widetilde{G}$ and $v$ is its unique neighbor in $\widetilde{G}$, then the
operation of obtaining $\widetilde{G}-\{u, v\}$ from $\widetilde{G}$ is called a \emph{pendant $K_{2}$ deletion}. The resultant subgraph $\widetilde{G}_{0}$ of $\widetilde{G}$ without pendant vertices, obtained from $\widetilde{G}$ by applying a series of pendant $K_2$ deletion operations, is called a \emph{crucial subgraph} of $\widetilde{G}$.

Let $\mathfrak{F}_{0}$ be the set of connected mixed graphs $\widetilde{G}$~($c(G)\geq 2$) with pairwise vertex-disjoint cycles which satisfies the following properties: (a) $\widetilde{G}$ containing no pendant vertices; (b)  $\widetilde{G}$ has a pendant odd cycle  with even signature and each other cycle  $\widetilde{C}_{p}$ is even cycle with $\sigma({\widetilde{C}_p})\equiv p~(\mathrm{mod}~4)$; (c) a series of pendant $K_2$ deletion operations can switch $\widetilde{H}$ to a crucial subgraph $\widetilde{H}_1$, which is disjoint union of $c(H)$ even cycles and an odd cycle with even signature, where $\widetilde{H}=\widetilde{G}-V(C_{q})+x$, $\widetilde{C}_{q}$ is a pendant even cycle of $\widetilde{G}$ and $x$ is the unique vertex of degree $3$ of $C_q$.

\begin{remark}
In the definition of $\mathfrak{F}_{0}$, since $\widetilde{G}$  is connected with pairwise vertex-disjoint cycles, $c(G)\geq 2$ and $\widetilde{G}$ contains no pendant vertices,  $\widetilde{G}$ has exactly two pendant cycles.
So $\widetilde{G}$ must have a pendant even cycle $\widetilde{C}_{q}$.
\end{remark}

Let $\mathfrak{F}_{1}=\mathfrak{F}_{0}\cup \mathfrak{Y}_{1}\cup \mathfrak{Y}_{2}$. Let $\mathfrak{F}_{2}$ be the set consisting of $\mathfrak{F}_{1}$ and mixed odd cycles with even signature.

We characterize some classes of mixed graphs with rank $r(\widetilde{G})=2m(G)-2\kappa(G)$ and $r(\widetilde{G})=2m(G)-2\kappa(G)+1$, respectively.

\noindent\begin{theorem}\label{th:1.3}
Let $\widetilde{G}$ be a mixed graph and $\kappa(G)=c(G)-1$. Then $r(\widetilde{G})=2m(G)-2\kappa(G)$ if and only if the following conditions both hold:
\begin{enumerate}[(a)]
\item any two even cycles (if any) of $\widetilde{G}$ share no common vertices;
\item  a series of pendant $K_{2}$ deletion operations can switch $\widetilde{G}$ to a crucial subgraph $\widetilde{G}_{1}$ with $\kappa(\widetilde{G}_{1})=\kappa(\widetilde{G})$, which is an empty  graph or has at most a connected component that is a graph in  $\mathfrak{G}_{2}$ and each other connected component is either an even cycle $\widetilde{C}_{q}$ with $\sigma(\widetilde{C}_{q})\equiv q~(\mathrm{mod}~4)$ or an isolated vertex.
\end{enumerate}
\end{theorem}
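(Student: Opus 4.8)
The plan is to reduce $\widetilde{G}$ to a crucial subgraph by repeated pendant $K_{2}$ deletions and then analyse that crucial subgraph component by component. The engine is the elementary reduction lemma: if $u$ is a pendant vertex of a mixed graph $\widetilde{G}$ with unique neighbour $v$, then clearing the $v$-th row and column of $H(\widetilde{G})$ against the $u$-th row and column (an operation that alters no other row or column) gives $r(\widetilde{G})=r(\widetilde{G}-\{u,v\})+2$, while $m(G)=m(G-\{u,v\})+1$. Hence $2m(G)-r(\widetilde{G})$ is invariant under a pendant $K_{2}$ deletion, so it equals $2m(G_{0})-r(\widetilde{G}_{0})$ for every crucial subgraph $\widetilde{G}_{0}$ of $\widetilde{G}$. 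Since deleting vertices never creates a cycle, $\kappa(G_{0})\le\kappa(G)$ and $c(G_{0})\le c(G)$. Assume now $r(\widetilde{G})=2m(G)-2\kappa(G)$. Applying Theorem~\ref{th:1.2} to $\widetilde{G}_{0}$ yields $2\kappa(G)=2m(G_{0})-r(\widetilde{G}_{0})\le 2\kappa(G_{0})\le 2\kappa(G)$, whence $\kappa(G_{0})=\kappa(G)$ and $r(\widetilde{G}_{0})=2m(G_{0})-2\kappa(G_{0})$ for every crucial subgraph. As $\widetilde{G}_{0}$ is an induced subgraph in which the number of even cycles is unchanged, every even cycle of $\widetilde{G}$ already lies inside $\widetilde{G}_{0}$.

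For necessity, fix a crucial subgraph $\widetilde{G}_{1}$ of $\widetilde{G}$ and write it as a disjoint union $D_{1}\cup\cdots\cup D_{s}$ of pendant-vertex-free components (possibly single vertices). Additivity of $r$, $m$, $\kappa$ together with $r(\widetilde{G}_{1})=2m(G_{1})-2\kappa(G_{1})$ and the lower bound of Theorem~\ref{th:1.2} forces $r(D_{j})=2m(D_{j})-2\kappa(D_{j})$ for each $j$. I would then classify such components. If $c(D_{j})=\kappa(D_{j})$, then $r(D_{j})=2m(D_{j})-2c(D_{j})$, so $D_{j}$ satisfies the three conditions of Theorem~\ref{th:1.1}; since $D_{j}$ has no pendant vertex, the requirement $m(T_{D_{j}})=m([T_{D_{j}}])$ can hold only when $D_{j}$ is a single even cycle $\widetilde{C}_{q}$ (and then $\sigma(\widetilde{C}_{q})\equiv q~(\mathrm{mod}~4)$ by Theorem~\ref{th:1.1}(b)) or an isolated vertex. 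If $c(D_{j})=\kappa(D_{j})+1$, then $D_{j}\in\mathfrak{G}_{2}$. Finally $\kappa(G)=c(G)-1$ gives $\sum_{j}\bigl(c(D_{j})-\kappa(D_{j})\bigr)=c(G_{1})-\kappa(G_{1})\le c(G)-\kappa(G)=1$, and since the classification shows every component with $r(D_{j})=2m(D_{j})-2\kappa(D_{j})$ has $0\le c(D_{j})-\kappa(D_{j})\le 1$, at most one $D_{j}$ has $c(D_{j})=\kappa(D_{j})+1$. This is exactly the structure in~(b); and since all even cycles of $\widetilde{G}$ lie in $\widetilde{G}_{1}$ while the even cycles of the components listed in~(b) are pairwise vertex-disjoint, condition~(a) follows as well.

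For sufficiency, take a pendant $K_{2}$ deletion sequence realising~(b), say of length $k$, ending at $\widetilde{G}_{1}$ with $\kappa(G_{1})=\kappa(G)$ and the stated shape; then $r(\widetilde{G})=r(\widetilde{G}_{1})+2k$ and $m(G)=m(G_{1})+k$. It remains to verify $r(\widetilde{G}_{1})=2m(G_{1})-2\kappa(G_{1})$, which by additivity reduces to verifying $r(D)=2m(D)-2\kappa(D)$ for each component $D$ of $\widetilde{G}_{1}$: an isolated vertex gives $0=2\cdot 0-2\cdot 0$; an even cycle $\widetilde{C}_{q}$ with $\sigma(\widetilde{C}_{q})\equiv q~(\mathrm{mod}~4)$ has $r=q-2=2m(C_{q})-2\kappa(C_{q})$ by the standard formula for the $H$-rank of a mixed cycle; and the at most one component in $\mathfrak{G}_{2}$ has this property by construction --- for an odd cycle with odd signature this is again the mixed-cycle rank formula, for $\widetilde{\infty}(p,1,q)\in\mathfrak{X}_{1}$ and $\widetilde{\theta}(p,l,q)\in\mathfrak{X}_{2}$ it is a direct $H$-rank computation using the prescribed signature and parity congruences, and for a member of $\mathfrak{G}_{0}$ it follows from Theorem~\ref{th:1.1} applied to $\widetilde{H}$ together with the fact that attaching a pendant odd cycle of odd signature at its degree-$3$ vertex raises both $r$ and $2m$ by $q-1$ while leaving $\kappa$ fixed. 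Summing gives $r(\widetilde{G})=2m(G_{1})-2\kappa(G_{1})+2k=2m(G)-2\kappa(G)$. (Note that~(a) is in fact implied by~(b): $\kappa(G_{1})=\kappa(G)$ places all even cycles of $\widetilde{G}$ inside $\widetilde{G}_{1}$, where they are pairwise disjoint.)

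The heart of the argument, and the main obstacle, is the classification invoked above: a connected pendant-vertex-free mixed graph $\widetilde{D}$ with $c(D)=\kappa(D)+1$ and $r(\widetilde{D})=2m(D)-2\kappa(D)$ is exactly a member of $\mathfrak{G}_{2}$, and no such $\widetilde{D}$ has $c(D)-\kappa(D)\notin\{0,1\}$. This splits into two cases. When the cycles of $\widetilde{D}$ are pairwise vertex-disjoint, one must locate a pendant odd cycle with odd signature and show the remainder $\widetilde{H}$ (obtained by deleting that cycle but keeping its degree-$3$ vertex) falls under Theorem~\ref{th:1.1}, i.e. $\widetilde{D}\in\mathfrak{G}_{0}$, or else $\widetilde{D}$ is that odd cycle itself; the key technical inputs here are the pendant-odd-cycle reductions for $r$ and $m$ and the matching lemma that a pendant-free graph with pairwise vertex-disjoint cycles and at least two cycles satisfies $m(T_{D})>m([T_{D}])$, which is what forces the non-special components to be single cycles or isolated vertices. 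When two cycles of $\widetilde{D}$ share a vertex, one must show $\widetilde{D}$ is an $\widetilde{\infty}(p,1,q)$ or $\widetilde{\theta}(p,l,q)$ with the parity pattern and signature congruences defining $\mathfrak{X}_{1}$ and $\mathfrak{X}_{2}$, which requires the exact $H$-rank of these two bicyclic families. Careful bookkeeping of $c$, $\kappa$, $m$ through the deletions, and repeated use of the standing hypothesis $\kappa(G)=c(G)-1$, tie the two cases together.
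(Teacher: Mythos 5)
Your reduction framework is sound and in fact packages the paper's Lemmas \ref{le:4.3}--\ref{le:4.5} nicely: the invariance of $2m(G)-r(\widetilde{G})$ under pendant $K_{2}$ deletion together with the lower bound of Theorem \ref{th:1.2} does show that any crucial subgraph $\widetilde{G}_{1}$ inherits $\kappa(G_{1})=\kappa(G)$ and $r(\widetilde{G}_{1})=2m(G_{1})-2\kappa(G_{1})$, and your treatment of the components with $c(D_{j})=\kappa(D_{j})$ via Theorem \ref{th:1.1}(c) and Lemma \ref{le:2.11} is correct. The sufficiency direction is also essentially the paper's (Lemmas \ref{le:2.5}, \ref{le:4.7}(a), \ref{le:4.8}(a), \ref{le:4.10}). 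But the necessity direction has a genuine gap: the entire argument hinges on the classification you yourself flag as ``the main obstacle,'' namely that a connected pendant-free component $\widetilde{D}$ with $r(\widetilde{D})=2m(D)-2\kappa(D)$ and $c(D)=\kappa(D)+1$ lies in $\mathfrak{G}_{2}$, and that no admissible component has $c(D)-\kappa(D)\notin\{0,1\}$. You state what ``must'' be shown but do not show it. This is not a routine verification: in the paper it occupies Lemmas \ref{le:4.6}, \ref{le:4.9}, \ref{le:4.10} and \ref{le:4.11}, and requires (i) the Berge-type argument that the rank hypothesis forces a perfect matching or an odd-cycle component, (ii) the parity-of-order argument obtained by deleting a neighbour $y$ of the branch vertex of a (pendant) even cycle and passing to a crucial subgraph of $\widetilde{G}-y$, which is what forces the no-pendant-even-cycle case to be bicyclic $\widetilde{\infty}(p,1,q)$ or $\widetilde{\theta}(p,l,q)$, and (iii) the exact rank computations for these two families via the coefficient formula of Lemma \ref{le:2.9} (the $\theta$-case cannot be done by vertex deletion alone). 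None of these appear in your proposal.

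A second, related problem is the order in which you obtain condition (a). The paper proves (a) first and directly: $r(\widetilde{G})=2m(G)-2\kappa(G)$ gives $\kappa(G-u)=\kappa(G)-1$ for every vertex $u$ on an even cycle (Lemma \ref{le:4.1}), and Lemma \ref{le:2.10} then yields that the even cycles are pairwise vertex-disjoint; only with (a) in hand does every subgraph satisfy $\kappa\le c$, which is what legitimises a case split on $c(D_{j})-\kappa(D_{j})\in\{0,1\}$. You instead derive (a) at the end from the classification, but your counting argument $\sum_{j}(c(D_{j})-\kappa(D_{j}))\le 1$ does not by itself exclude a component with intersecting even cycles (so $c-\kappa<0$, e.g. an all-even $\theta$-graph) compensated by another component of excess $\ge 2$; ruling that out is exactly the unproven classification again, so as written the derivation of (a) is circular unless you first run the Lemma \ref{le:4.1}/\ref{le:2.10} argument. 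With that step inserted and the classification lemmas actually proved, your outline matches the paper's proof; as it stands, the heart of the theorem is assumed rather than established.
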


\noindent\begin{theorem}\label{th:1.4}
Let $\widetilde{G}$ be a mixed graph and $\kappa(G)=c(G)-1$. Then $r(\widetilde{G})=2m(G)-2\kappa(G)+1$ if and only if the following conditions both hold:
\begin{enumerate}[(a)]
\item any two even cycles (if any) of $\widetilde{G}$ share no common vertices;
\item  a series of pendant $K_{2}$ deletion operations can switch $\widetilde{G}$ to a crucial subgraph $\widetilde{G}_{1}$ with $\kappa(\widetilde{G}_{1})=\kappa(\widetilde{G})$, which has exactly a connected component that is a graph in  $\mathfrak{F}_{2}$ and each other connected component is  either an even cycle $\widetilde{C}_{q}$ with $\sigma(\widetilde{C}_{q})\equiv q~(\mathrm{mod}~4)$ or an isolated vertex.
\end{enumerate}
\end{theorem}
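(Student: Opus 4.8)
The plan is to prove both implications by reducing $\widetilde{G}$ to a crucial subgraph and analysing its connected components, in close parallel with the proof of Theorem~\ref{th:1.3}. Write $\delta(\widetilde{G})=r(\widetilde{G})-2m(G)+2\kappa(G)$, so that Theorem~\ref{th:1.2} says $\delta(\widetilde{G})\ge 0$, the assertion to be proved is ``$\delta(\widetilde{G})=1$'', and $\delta$ is additive over connected components. The main tools are: (i) the pendant-$K_{2}$ reduction --- if $u$ is a pendant vertex with neighbour $v$, then $r(\widetilde{G})=r(\widetilde{G}-\{u,v\})+2$ and $m(G)=m(G-\{u,v\})+1$, while $\kappa$ is non-increasing under the deletion and preserved exactly when $v$ lies on no even cycle, so the deletion preserves $\delta$ precisely when it preserves $\kappa$; (ii) Hermitian interlacing, $r(\widetilde{G}-v)\le r(\widetilde{G})\le r(\widetilde{G}-v)+2$, with $m(G)-1\le m(G-v)\le m(G)$ and $\kappa(G-v)\le\kappa(G)$; (iii) the reduction lemmas for a pendant cycle and the exact $H$-ranks of mixed cycles and of $\widetilde{\infty}(p,1,q)$, $\widetilde{\infty}(p,l,q)$ and $\widetilde{\theta}(p,l,q)$ --- in particular, a mixed odd cycle has nullity $1$ iff its signature is odd, and a mixed even cycle $\widetilde{C}_{q}$ has nullity $2$ iff $\sigma(\widetilde{C}_{q})\equiv q~(\mathrm{mod}~4)$ and nullity $0$ otherwise.

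Necessity. Suppose $\delta(\widetilde{G})=1$. If two even cycles of $\widetilde{G}$ shared a vertex $v$, then at least two even cycles pass through $v$, so $\kappa(G-v)\le\kappa(G)-2$; combining this with $m(G-v)\ge m(G)-1$, interlacing, and Theorem~\ref{th:1.2} gives $r(\widetilde{G})\ge r(\widetilde{G}-v)\ge 2m(G-v)-2\kappa(G-v)\ge 2m(G)-2\kappa(G)+2$, that is $\delta(\widetilde{G})\ge 2$, a contradiction; this proves (a). Next, fix any reducing sequence of pendant-$K_{2}$ deletions from $\widetilde{G}$ to a crucial subgraph $\widetilde{G}_{1}$ and consider the first deletion, if any, that meets an even cycle: since all earlier deletions preserve $\kappa$ they preserve $\delta$, so the graph just before this step still has $\delta=1$, while the bad deletion lowers $r$ by $2$ and $m$ by $1$ but lowers $\kappa$ by at least $1$, giving $\delta<0$ afterwards and contradicting Theorem~\ref{th:1.2}. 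Hence $\kappa(\widetilde{G}_{1})=\kappa(\widetilde{G})$ and $\delta(\widetilde{G}_{1})=1$. Splitting $\widetilde{G}_{1}$ into connected components, their $\delta$-values are nonnegative and sum to $1$, so exactly one component $\widetilde{K}$ has $\delta(\widetilde{K})=1$ and every other has $\delta=0$. Now $\kappa(G)=c(G)-1$ forces $c(\widetilde{G}_{1})\le c(\widetilde{G})=\kappa(\widetilde{G}_{1})+1$, while (a) makes the even cycles of $\widetilde{G}_{1}$ independent in the cycle space; hence at most one component of $\widetilde{G}_{1}$ has more independent cycles than even cycles, and since an all-even-cycle component contributes an even value of $\delta$, the component $\widetilde{K}$ must be that exceptional one, with $c(K)=\kappa(K)+1$, and every other component is an isolated vertex or an even cycle $\widetilde{C}_{q}$ with $\sigma(\widetilde{C}_{q})\equiv q~(\mathrm{mod}~4)$. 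It remains to show that a connected, pendant-free mixed graph $\widetilde{K}$ with $c(K)=\kappa(K)+1$ and $\delta(\widetilde{K})=1$ belongs to $\mathfrak{F}_{2}$; this is proved by induction on $c(K)$, where the base case $c(K)=1$ is a mixed odd cycle (forced to have even signature), the case $c(K)=2$ is one of $\widetilde{\infty}(p,1,q)$, $\widetilde{\infty}(p,l,q)$ $(l\ge2)$, $\widetilde{\theta}(p,l,q)$ whose $H$-ranks pin down the signature congruences defining $\mathfrak{Y}_{1}$, $\mathfrak{Y}_{2}$ and the two-cycle members of $\mathfrak{F}_{0}$, and the case $c(K)\ge3$ is handled by deleting a pendant even cycle $\widetilde{C}_{q}$ and reducing --- via the crucial subgraph of $\widetilde{H}=\widetilde{K}-V(C_{q})+x$, which is exactly what condition (c) in the definition of $\mathfrak{F}_{0}$ records --- to a graph covered by the inductive hypothesis. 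This establishes (b).

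Sufficiency. Assume (a) and (b). Since $\kappa$ is non-increasing and $\kappa(\widetilde{G}_{1})=\kappa(\widetilde{G})$, every deletion in the prescribed sequence preserves $\kappa$ and hence preserves $\delta$, so $\delta(\widetilde{G})=\delta(\widetilde{G}_{1})$ and it suffices to prove $\delta(\widetilde{G}_{1})=1$. By additivity, isolated vertices and even cycles $\widetilde{C}_{q}$ with $\sigma(\widetilde{C}_{q})\equiv q~(\mathrm{mod}~4)$ contribute $\delta=0$, so the whole direction reduces to the lemma that $\delta(\widetilde{H})=1$ for every $\widetilde{H}\in\mathfrak{F}_{2}$. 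For a mixed odd cycle with even signature this is the computation $r=q=2m(C_{q})-2\kappa(C_{q})+1$; for $\mathfrak{Y}_{1}$ and $\mathfrak{Y}_{2}$ it follows from the $H$-rank formulas for $\widetilde{\infty}(p,1,q)$ and $\widetilde{\theta}(p,l,q)$, using that these graphs have a (near-)perfect matching so that $\delta=1$ is equivalent to a one-dimensional kernel; and for $\mathfrak{F}_{0}$ one applies the pendant-cycle reduction lemmas together with condition (c), which guarantees that after passing to the appropriate crucial subgraph one is left with a disjoint union of even cycles (each with $\sigma\equiv p~(\mathrm{mod}~4)$) and a single odd cycle with even signature, of total $\delta$ equal to $1$.

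The routine bookkeeping above is straightforward; the genuine difficulty lies in two places. First, the exact $H$-rank evaluations for $\widetilde{\infty}(p,l,q)$ and $\widetilde{\theta}(p,l,q)$ and the inductive treatment of $\mathfrak{F}_{0}$: a pendant even cycle with $\sigma\equiv q~(\mathrm{mod}~4)$ carries a two-dimensional kernel that touches the rest of the graph only through its degree-$3$ vertex, and one must track exactly how this kernel, and the matching number, behave under deletion; the several cases (governed by parities of path lengths and by which of $\widetilde{H}-x$ or $\widetilde{H}$ attains the maximum matching) are precisely what the recursive conditions (c) in the definitions of $\mathfrak{G}_{0}$, $\mathfrak{F}_{0}$ and congruences such as $q+l+\sigma(\widetilde{C}_{q+l-2})\equiv 2~(\mathrm{mod}~4)$ are designed to encode, and this is also where the Hermitian structure --- through the cycle ``value'' $i^{\sigma}$ --- does work invisible in the undirected case. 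Second, in the necessity direction, proving that the hypotheses leave no room for extra cyclic structure in $\widetilde{G}_{1}$ --- that $\kappa(G)=c(G)-1$, $\delta(\widetilde{G})=1$ and (a) together force all but one component of $\widetilde{G}_{1}$ to be an even cycle or an isolated vertex and the remaining one to lie in $\mathfrak{F}_{2}$ --- requires care with the parity of $\delta$ on all-even-cycle subgraphs and with the $\theta$-type configurations in which two odd cycles overlap.
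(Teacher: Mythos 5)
Your overall architecture (pass to a crucial subgraph, use additivity of $\delta=r-2m+2\kappa$ over components, identify the one exceptional component as a member of $\mathfrak{F}_{2}$) runs parallel to the paper's, and some individual observations are sound — e.g.\ your argument that no pendant $K_{2}$ deletion can meet an even cycle is exactly Lemmas \ref{le:4.3}--\ref{le:4.4} iterated, and your parity remark that an all-even-cycle component has even $\delta$ is a legitimate shortcut to Lemma \ref{le:4.15} (it follows because such a component is bipartite, so its Hermitian adjacency matrix has even rank), though you assert rather than justify it. The difficulty is that the steps you leave as assertions are precisely the substantive content of the theorem. Most critically, your induction on $c(K)$ for the exceptional component says ``the case $c(K)\ge 3$ is handled by deleting a pendant even cycle'', presupposing that a pendant even cycle exists. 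That existence is not obvious: a connected, pendant-free graph with $\kappa=c-1$ may a priori have all of its even cycles attached at two or more vertices, or have a $\theta$-shaped block hiding its odd cycles. Ruling this out is exactly what the paper proves in Lemmas \ref{le:4.6} and \ref{le:4.9} (and then Lemmas \ref{le:4.11}, \ref{le:4.16}): via a perfect-matching/parity argument one shows that if there is no pendant vertex and no pendant even cycle, the graph must be one of the two bicyclic types $\widetilde{\infty}(p,1,q)$ or $\widetilde{\theta}(p,l,q)$. Your proposal contains no substitute for this argument, so the induction cannot get started in general; you flag this area as ``the genuine difficulty'' but do not resolve it.

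Two further gaps of the same nature: (i) the exact $H$-rank classification of the mixed theta graphs (the paper's Lemma \ref{le:4.8}, proved by a Sachs-coefficient computation) is not among the preliminary results you may quote, yet you use it both to handle the $c(K)=2$ case and to pin down $\mathfrak{Y}_{2}$; listing it as a ``tool'' leaves the base of your induction unproved. (ii) Your claim that every $\delta=0$ component of $\widetilde{G}_{1}$ is an isolated vertex or a single even cycle with $\sigma(\widetilde{C}_{q})\equiv q\ (\mathrm{mod}\ 4)$ needs an argument: such a component is pendant-free and bipartite with $\kappa=c$, so $r=2m-2c$, and one must then invoke Theorem \ref{th:1.1}(c) together with the tree lemma (Lemma \ref{le:2.11}, as in Lemma \ref{le:4.13}) to exclude, say, two even cycles joined by a path. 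Finally, in the sufficiency direction for $\mathfrak{F}_{0}$ you need the identity $m(G)=m(H)+q/2$, which is not automatic: it holds only if some maximum matching of $H$ misses the attachment vertex $x$, and establishing this from condition (c) of the definition of $\mathfrak{F}_{0}$ is where the paper's Lemma \ref{le:4.17} does its matching construction. As it stands, the proposal is a plausible roadmap that matches the paper's strategy in outline, but the pendant-even-cycle existence step, the bicyclic rank computations, and the component/matching bookkeeping are genuine missing pieces rather than routine details.
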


Define $E_1(C)$ as the edge set where one endpoint resides in $C$ while the other lies outside $C$. Let $\mathcal{F}$ denote the collection of functions $f$ satisfying $\sum_{e \in E(G)} f(e) = m^{\ast}(G)$ and $f(e) \in \left\{ 0, \frac{1}{2}, 1 \right\}$ for all $e \in E(G)$. A fractional matching $f$ of graph $G$ is termed an optimal fractional matching if $f \in \mathcal{F}$ and $f$ attains the largest number of edges with $f(e)=1$ within $\mathcal{F}$. It is evident that a perfect matching is an optimal fractional matching. For an optimal fractional matching $f$, we define $E_2(G)$ as $\left\{ e \in E(G) \mid f(e) = 1 \right\}$.
\begin{remark}
According to \cite{SU}, for any graph $G$, $2m^{\ast}(G)$ is an integer. Moreover, there is a fractional matching $f$ for which
$$\sum_{e\in E(G)} f(e) = m^{\ast}(G),$$ such that $f(e)\in \{0, \frac{1}{2}, 1\}$ for every edge $e$. Thus for any graph $G$, $\mathcal{F}\neq \emptyset$.
\end{remark}

 Some class of mixed graphs with rank  $r(\widetilde{G})=2m^{\ast}(G)$ is identified as follows.
\noindent\begin{theorem}\label{th:1.5}
Let $\widetilde{G}$ be a bipartite cycle-disjoint mixed graph of order $n$. Then $\widetilde{G}$ is nonsingular if and only if the following conditions both hold:
\begin{enumerate}[(a)]
\item $G$ has a perfect fractional matching;
\item there exists an optimal fractional matching, for any even cycle $\widetilde{C}_{q}$ with $\sigma(\widetilde{C}_{q})\equiv q~(\mathrm{mod}~4)$, $E_{1}(C_{q})\cap E_{2}(G)\neq \emptyset$.
\end{enumerate}
\end{theorem}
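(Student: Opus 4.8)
The plan is to recast the statement as a nonvanishing question for $\det H(\widetilde{G})$ and to evaluate that determinant through a Sachs-type expansion. Two elementary reductions come first. Since $r(\widetilde{G})\le 2m^{\ast}(G)\le n$ by Theorem~\ref{th:1.2}, a nonsingular $\widetilde{G}$ must have $m^{\ast}(G)=n/2$, so condition~(a) is necessary; assuming~(a), ``$\widetilde{G}$ nonsingular'' means exactly $\det H(\widetilde{G})\neq 0$. Moreover, since $G$ is bipartite we have $m^{\ast}(G)=m(G)$ (integrality of the bipartite fractional matching polytope), so~(a) forces $G$ to possess a perfect matching; and for such a $G$ the members of $\mathcal{F}$ attaining the maximum number of weight-$1$ edges are precisely the indicator functions of perfect matchings, so an optimal fractional matching $f$ satisfies $E_{2}(G)=\{e:f(e)=1\}=M$ for the associated perfect matching $M$. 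Hence condition~(b) is equivalent to: \emph{there is a perfect matching $M$ of $G$ with $M\cap E_{1}(C)\neq\emptyset$ for every cycle $C$ satisfying $\sigma(\widetilde{C})\equiv|C|\pmod 4$}.

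Next I would expand $\det H(\widetilde{G})$ over permutations. Each nonzero term corresponds to a spanning Sachs subgraph of $\widetilde{G}$: an edge component contributes $-1$, and a cycle component $\widetilde{C}$ (of even length, as $G$ is bipartite) contributes $-2\cos\!\bigl(\pi\sigma(\widetilde{C})/2\bigr)$, so only \emph{basic} Sachs subgraphs survive. Using that $G$ is cycle-disjoint, its cycles $C_{1},\dots,C_{s}$ are pairwise vertex-disjoint and chordless, and every spanning Sachs subgraph consists of a subset $I\subseteq[s]$ of these cycles together with a perfect matching of $G-\bigcup_{i\in I}V(C_{i})$; writing $N_{I}$ for the number of such matchings and factoring out the global sign, one gets
\begin{equation*}
\det H(\widetilde{G})=(-1)^{n/2}\sum_{I\subseteq[s]}N_{I}\prod_{i\in I}\beta_{i},\qquad
\beta_{i}:=-2\,(-1)^{|C_{i}|/2}\cos\!\Bigl(\tfrac{\pi\sigma(\widetilde{C}_{i})}{2}\Bigr).
\end{equation*}
A short case check modulo $4$ on $|C_{i}|$ and $\sigma(\widetilde{C}_{i})$ yields the closed form: $\beta_{i}=-2$ exactly when $\sigma(\widetilde{C}_{i})\equiv|C_{i}|\pmod 4$ (the \emph{bad} cycles, set $B$), $\beta_{i}=2$ when $\sigma(\widetilde{C}_{i})$ is even but $C_{i}$ is not bad (set $E$), and $\beta_{i}=0$ when $\sigma(\widetilde{C}_{i})$ is odd.

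To finish I would exchange the order of summation using the identity $2^{|I|}N_{I}=\#\{M:\,J(M)\supseteq I\}$, where $M$ ranges over perfect matchings of $G$ and $J(M):=\{i:\,M\cap E_{1}(C_{i})=\emptyset\}$; this uses that the $C_{i}$ are chordless and that a cycle has exactly two perfect matchings. Substituting and collecting terms cycle by cycle, the contribution of a fixed $M$ becomes $\bigl(\sum_{I\subseteq B\cap J(M)}(-1)^{|I|}\bigr)\,2^{|E\cap J(M)|}$, which equals $2^{|E\cap J(M)|}$ if $B\cap J(M)=\emptyset$ and $0$ otherwise. Therefore
\begin{equation*}
\det H(\widetilde{G})=(-1)^{n/2}\!\!\sum_{\substack{M\ \text{perfect matching of}\ G\\ M\cap E_{1}(C)\neq\emptyset\ \text{for every bad}\ C}}\!\!2^{\,|E\cap J(M)|},
\end{equation*}
a sum of strictly positive integers. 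Hence $\det H(\widetilde{G})\neq 0$ precisely when the index set is nonempty, which is exactly the reformulated condition~(b); combined with the necessity of~(a) this proves the theorem.

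The step I expect to demand the most care is the sign bookkeeping in the Sachs expansion---in particular verifying the closed form for $\beta_{i}$ via the mod-$4$ case analysis of $|C_{i}|$ and $\sigma(\widetilde{C}_{i})$---together with justifying the reduction ``optimal fractional matching $=$ perfect matching, $E_{2}(G)=M$'' from the integrality of the bipartite fractional matching polytope. Once these are settled, the inclusion--exclusion collapse producing the final positive-term formula is routine.
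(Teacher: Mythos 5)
Your proposal is correct, and it reaches the theorem by a genuinely different route than the paper. The paper splits the equivalence: necessity of (b) is Lemma~\ref{le:5.1}, where basic spanning subgraphs are grouped into triples (a bad cycle versus its two perfect matchings) whose contributions to $a_{2m}$ cancel; sufficiency is an induction on $n$ that either lands in the unique-perfect-matching case (settled by the odd-coefficient parity argument of Lemma~\ref{le:3.2} via Lemma~\ref{le:3.1}) or strips off a non-bad cycle $\widetilde{C}_q$ avoided by some perfect matching and relates $a_{2m(G)}$ to $a_{2m(H)}$ through Lemma~\ref{le:2.9}. You instead evaluate $\det H(\widetilde{G})$ once and for all: the Sachs-type permutation expansion, the parametrization of spanning Sachs subgraphs by subsets of the pairwise disjoint (hence chordless) cycles, the identity $2^{|I|}N_I=\#\{M:\,J(M)\supseteq I\}$, and the inclusion--exclusion collapse give $|\det H(\widetilde{G})|=\sum_M 2^{|E\cap J(M)|}$, summed over perfect matchings meeting $E_1(C)$ for every cycle $C$ with $\sigma(\widetilde{C})\equiv|C|\pmod 4$; your sign and mod-$4$ bookkeeping for $\beta_i$ checks out against Lemma~\ref{le:2.5}/\ref{le:2.9}. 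This buys an induction-free argument that proves both directions simultaneously (nonemptiness of the index set already yields a perfect matching, hence condition (a) without invoking Theorem~\ref{th:1.2}), and it is quantitatively stronger, giving an explicit positive formula for $|\det H(\widetilde{G})|$ rather than only its nonvanishing; it also sidesteps the paper's structural claim that every basic spanning subgraph restricts on $C_q$ to the cycle or one of its two perfect matchings, which in your framework is exactly the chordless/vertex-disjoint parametrization. The paper's route, for its part, stays entirely within its lemma toolkit and mirrors the signed-graph argument of Chen and Guo. Two small points to write out when finalizing: justify that under (a) and bipartiteness the optimal fractional matchings are precisely indicator functions of perfect matchings (any $f\in\mathcal{F}$ has its weight-$1$ edges forming a matching of size at most $n/2$, and equality forces all remaining weights to vanish), so condition (b) is equivalent to your reformulation with $E_2(G)=M$; and note that for odd $n$ a bipartite graph has no spanning Sachs subgraph and no perfect matching, so both sides of the equivalence fail consistently.
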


\begin{remark}
In Theorem \ref{th:1.5}, if $\widetilde{G}$ is nonsingular, then $r(\widetilde{G})=n$. Recall that in definition of fractional matching, we show that $2m^{\ast}(G)\leq n$.
 By Inequality $(2)$, $r(\widetilde{G})\leq 2m^{\ast}(G)\leq n$.  Thus $r(\widetilde{G})=2m^{\ast}(G)= n$.
\end{remark}

An \emph{oriented graph} $G^{\sigma}$ obtained from $G$ is defined by assigning to each edge of $G$ a direction $\sigma$, where $G$ is called the underlying graph of $G^{\sigma}$. The \emph{skew-adjacency matrix} associated to $G^{\sigma}$ of order $n$, denoted by $S(G^{\sigma})$, is defined as a matrix $(s_{xy})_{n \times n}$ such that $s_{xy}=1$ if there is an arc from $x$ to $y$, $s_{xy }= -1$ if there is an arc from $y$ to $x$ and $s_{xy}=0$ otherwise. The \emph{skew-rank} of $G^{\sigma}$, denoted by $sr(G^{\sigma})$, is defined to be the rank of $sr(G^{\sigma})$. Obviously, an oriented graph $G^{\sigma}$ can be regraded as a mixed graph $\widetilde{G}$ without undirected edges. Consequently, $H(\widetilde{G})=iS(G^{\sigma})$ and $r(\widetilde{G})=sr(G^{\sigma})$. Hence Theorems \ref{th:1.2}, \ref{th:1.3}, \ref{th:1.4} and \ref{th:1.5} can be applied to  oriented graphs.


\section{Preliminaries}

~~~~In this section, we list some elementary lemmas and known results that will be
useful in the proof of our main results. For a mixed graph, we list the following lemmas.


\noindent\begin{lemma}\label{le:2.1}\cite{M}
If $v$ is a vertex of a mixed graph $\widetilde{G}$, then $r(\widetilde{G})-2\leq r(\widetilde{G}-v)\leq r(\widetilde{G})$.
\end{lemma}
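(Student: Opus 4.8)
The plan is to observe that vertex deletion turns $H(\widetilde{G})$ into a principal submatrix of itself, and then to invoke only elementary facts about how the rank of a matrix changes under the removal of a single row or column. By the definition of $H(\widetilde{G})$, deleting the vertex $v$ together with all edges and arcs incident with it leaves every entry $h_{xy}$ with $x,y \ne v$ unchanged; hence $H(\widetilde{G}-v)$ is precisely the $(n-1)\times(n-1)$ principal submatrix of $H := H(\widetilde{G})$ obtained by striking out the row and the column indexed by $v$. I would then introduce the intermediate matrix $H^{\ast}$ obtained from $H$ by deleting only the row indexed by $v$ (all columns retained), so that $H(\widetilde{G}-v)$ arises from $H^{\ast}$ by deleting the column indexed by $v$.

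Next I would use two standard facts of linear algebra: removing one row (or one column) from a matrix cannot increase its rank, and it decreases the rank by at most one. Since the row space of $H^{\ast}$ is spanned by all but one of the rows of $H$, we get $\mathrm{rank}(H)-1 \le \mathrm{rank}(H^{\ast}) \le \mathrm{rank}(H)$, and likewise $\mathrm{rank}(H^{\ast})-1 \le \mathrm{rank}\bigl(H(\widetilde{G}-v)\bigr) \le \mathrm{rank}(H^{\ast})$ for the subsequent column deletion. Chaining these inequalities yields $r(\widetilde{G})-2 \le r(\widetilde{G}-v) \le r(\widetilde{G})$, which is the claim.

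An equally short alternative exploits that $H$ is Hermitian and applies the Cauchy interlacing theorem: writing $p$ and $q$ for the numbers of positive and negative eigenvalues of $H$ (so that $r(\widetilde{G})=p+q$), interlacing forces the principal submatrix $H(\widetilde{G}-v)$ to have at least $p-1$ positive and at least $q-1$ negative eigenvalues, hence rank at least $p+q-2$, and at most $p$ positive and $q$ negative eigenvalues, hence rank at most $p+q$. Since the statement is essentially pure linear algebra, I do not anticipate any real obstacle; the only point deserving a moment's care is the bookkeeping identification of $H(\widetilde{G}-v)$ with the correct principal submatrix of $H(\widetilde{G})$, which is immediate from the definition of the Hermitian adjacency matrix and will be used repeatedly in the sequel.
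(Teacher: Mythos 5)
Your proposal is correct. The paper does not prove this lemma itself but quotes it from Mohar \cite{M}, and your argument — identifying $H(\widetilde{G}-v)$ as the principal submatrix of $H(\widetilde{G})$ obtained by deleting the row and column indexed by $v$, and using that deleting a single row or column changes the rank by at most one and never increases it — is exactly the standard proof of this fact; the Cauchy interlacing variant you sketch is an equally valid route exploiting that $H(\widetilde{G})$ is Hermitian.
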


\noindent\begin{lemma}\label{le:2.2}\cite{WYL}
Let $\widetilde{G}$ be a mixed graph containing a pendant vertex $x$ with the unique neighbor $y$. Then $r(\widetilde{G})=r(\widetilde{G}-x-y)+2$.
\end{lemma}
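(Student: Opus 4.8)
The plan is to read off the block structure of the Hermitian adjacency matrix $H(\widetilde{G})$ forced by the pendant vertex $x$, and then reduce it by rank-preserving elementary operations. First I would order the vertices of $\widetilde{G}$ as $x, y, v_3, \dots, v_n$, so that $H := H(\widetilde{G})$ takes the bordered form
\begin{equation*}
H = \begin{pmatrix} 0 & \alpha & \mathbf{0}^{T} \\ \overline{\alpha} & 0 & \mathbf{b}^{T} \\ \mathbf{0} & \overline{\mathbf{b}} & M \end{pmatrix},
\end{equation*}
where $\alpha = h_{xy}$, the row vector $\mathbf{b}^{T}$ collects the entries $h_{y v_j}$ for $j \ge 3$ (so that $\overline{\mathbf{b}}$ collects the conjugates $h_{v_j y}$, as $H$ is Hermitian), and $M = H(\widetilde{G} - x - y)$. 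The crucial point, forced by the definition of the Hermitian adjacency matrix together with the hypothesis that $x$ is pendant with unique neighbour $y$, is that the row and column of $x$ each have a single nonzero entry $\alpha \in \{1, i, -i\}$; hence $\alpha \neq 0$, $|\alpha| = 1$, and the $x$-row and $x$-column contribute nothing inside the $M$-block.

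Next I would clear the border using $\alpha$. Since column $1$ (the $x$-column) has its only nonzero entry $\overline{\alpha}$ in row $2$, the column operations $C_j \mapsto C_j - (b_j/\overline{\alpha})\, C_1$ for $j \ge 3$ annihilate every off-diagonal entry of row $2$ while touching no row $\ge 3$; dually, the row operations $R_i \mapsto R_i - (\overline{b_i}/\alpha)\, R_1$ for $i \ge 3$ annihilate the below-diagonal entries of column $2$ while touching no column $\ge 3$. These two families do not interfere: the row operations alter only column $2$ and the column operations alter only row $2$, so row $1$ and column $1$ stay equal to $(0,\alpha,\mathbf{0}^T)$ and $(0,\overline{\alpha},\mathbf{0})^{T}$ throughout, and the operations may be applied in either order. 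Because elementary row and column operations preserve rank, $H$ is transformed into the block-diagonal matrix $\operatorname{diag}\!\left(\begin{pmatrix} 0 & \alpha \\ \overline{\alpha} & 0 \end{pmatrix},\, M\right)$.

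Finally I would add the ranks. The $2 \times 2$ block has determinant $-|\alpha|^{2} = -1 \neq 0$ and hence rank $2$, while the remaining block is exactly $M = H(\widetilde{G} - x - y)$, so $r(\widetilde{G}) = \operatorname{rank}(H) = 2 + \operatorname{rank}(M) = r(\widetilde{G} - x - y) + 2$, as claimed. I do not expect a serious obstacle: the only thing needing care is the bookkeeping that the two families of elementary operations act on disjoint parts of the matrix, which is precisely what guarantees the $M$-block emerges untouched. An alternative, fully equivalent route would be to expand the rank along the sparse $x$-row via the bordered-matrix formula, but the explicit elimination above makes the invariance of $M$ most transparent and also yields, as a byproduct, the nullity identity $\eta(\widetilde{G}) = \eta(\widetilde{G} - x - y)$.
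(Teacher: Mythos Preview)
Your argument is correct: the bordered block form is exactly what the pendant hypothesis gives, and the elimination you describe is clean because column~1 is supported only in row~2 and row~1 is supported only in column~2, so the $M$-block is genuinely untouched. Note that the paper does not supply its own proof of this lemma; it simply quotes the result from \cite{WYL}, and your elementary-operations argument is the standard one used there and elsewhere for pendant reduction in Hermitian (and skew, and signed) adjacency matrices.
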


\noindent\begin{lemma}\label{le:2.3}\cite{M}
Let $\widetilde{G}$ be a mixed graph.
\begin{enumerate}[(a)]
\item If $\widetilde{G}=\widetilde{G}_{1}\cup \widetilde{G}_{2}\cup \cdots \cup \widetilde{G}_{t}$, where $\widetilde{G}_{1}, \widetilde{G}_{2},\ldots, \widetilde{G}_{t}$ are connected components of $\widetilde{G}$, then $r(\widetilde{G})=\Sigma_{i=1}^{t} r(\widetilde{G}_{i})$.
\item   $r(\widetilde{G})=0$ if and only if $\widetilde{G}$ is a graph without edges.
\end{enumerate}
\end{lemma}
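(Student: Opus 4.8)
The plan is to argue directly from the structure of the Hermitian adjacency matrix $H(\widetilde{G})$, since both assertions are purely linear-algebraic once the block structure is made explicit. Recall that $H(\widetilde{G})$ is Hermitian, with off-diagonal entry $h_{uv}$ nonzero (equal to $1$, $i$, or $-i$) exactly when $u$ and $v$ are joined by an undirected edge or an arc, and $h_{uv}=0$ otherwise.

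For part (a), I would first fix an ordering of $V(\widetilde{G})$ that lists all vertices of $\widetilde{G}_{1}$, then all vertices of $\widetilde{G}_{2}$, and so on up to $\widetilde{G}_{t}$. Because $\widetilde{G}_{1},\ldots,\widetilde{G}_{t}$ are distinct connected components, no vertex of $\widetilde{G}_{i}$ is joined (by an undirected edge or by an arc in either direction) to a vertex of $\widetilde{G}_{j}$ when $i\neq j$; hence every entry $h_{uv}$ with $u\in V(\widetilde{G}_{i})$, $v\in V(\widetilde{G}_{j})$ and $i\neq j$ vanishes by definition. With respect to this ordering $H(\widetilde{G})$ is therefore block diagonal, with diagonal blocks exactly $H(\widetilde{G}_{1}),\ldots,H(\widetilde{G}_{t})$. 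Passing from the original ordering to this one amounts to replacing $H(\widetilde{G})$ by $P^{\top}H(\widetilde{G})P$ for a suitable permutation matrix $P$, which leaves the rank unchanged. Since the rank of a block-diagonal matrix is the sum of the ranks of its diagonal blocks, I conclude $r(\widetilde{G})=\sum_{i=1}^{t}r(\widetilde{G}_{i})$.

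For part (b), I would simply read off the definition of $H(\widetilde{G})$ in both directions. If $\widetilde{G}$ has no edges, then there is no undirected edge and no arc between any pair of vertices, so every off-diagonal entry is $0$; the diagonal entries are $0$ as well, whence $H(\widetilde{G})$ is the zero matrix and $r(\widetilde{G})=0$. Conversely, if $r(\widetilde{G})=0$, then $H(\widetilde{G})$ must be identically $0$, since a matrix has rank $0$ precisely when it is the zero matrix. In particular $h_{uv}=0$ for every pair $u,v$, which by the definition of $H(\widetilde{G})$ forbids any undirected edge or arc between $u$ and $v$; therefore $\widetilde{G}$ has no edges.

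I expect no genuine obstacle here: both parts reduce to standard facts about matrix rank, namely its invariance under permutation similarity together with additivity over a block-diagonal decomposition for (a), and the characterization of rank-zero matrices for (b). The only point that warrants explicit care is the vanishing of the cross-block entries in (a), and this is immediate from the defining property of connected components, that no edge or arc joins vertices in different components.
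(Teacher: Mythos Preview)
Your argument is correct: both parts follow directly from basic linear algebra once the block structure of $H(\widetilde{G})$ is made explicit, and you have spelled out the only nontrivial point (the vanishing of cross-block entries) carefully. Note that the paper does not actually supply a proof of this lemma; it is simply quoted from Mohar~\cite{M}, so there is no in-paper argument to compare against.
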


 \noindent\begin{lemma}\label{le:2.4}\cite{CHL}
 Let $\widetilde{T}$ be a mixed tree. Then $r(\widetilde{T})=r(T)=2m(T)$.
\end{lemma}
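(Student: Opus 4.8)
The plan is to prove the single equality $r(\widetilde{T}) = 2m(T)$ by induction on the number of vertices $n$ of the underlying tree $T$; the full chain $r(\widetilde{T}) = r(T) = 2m(T)$ then follows immediately, since the underlying graph $T$ (with all edges undirected) is itself a mixed tree, so the same argument applied to it yields $r(T) = 2m(T)$. The key tool I would use is Lemma \ref{le:2.2}, which strips off a pendant edge while tracking the rank exactly, paired with the elementary matching recursion for trees and the additivity of rank over components from Lemma \ref{le:2.3}(a).

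For the base case, if $n \le 1$ then $\widetilde{T}$ has no edges, so $r(\widetilde{T}) = 0 = 2m(T)$ by Lemma \ref{le:2.3}(b). For the inductive step I would assume $n \ge 2$; since $T$ is a tree with at least one edge, it has a pendant (leaf) vertex $x$ with unique neighbor $y$. Lemma \ref{le:2.2} then gives $r(\widetilde{T}) = r(\widetilde{T} - x - y) + 2$. The graph $\widetilde{T} - x - y$ is a mixed forest; writing it as the disjoint union of its connected components $\widetilde{T}_1, \dots, \widetilde{T}_t$ (each a mixed tree on fewer than $n$ vertices, with any isolated vertices contributing rank $0$), Lemma \ref{le:2.3}(a) together with the induction hypothesis yields $r(\widetilde{T} - x - y) = \sum_{i=1}^{t} r(\widetilde{T}_i) = \sum_{i=1}^{t} 2m(T_i) = 2m(T - x - y)$, the last step because the matching number is additive over connected components.

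It then remains to relate $m(T)$ and $m(T - x - y)$. Here I would invoke the standard fact that a tree admits a maximum matching saturating any prescribed leaf: starting from any maximum matching $M$, if the pendant edge $xy \notin M$ then $y$ must already be matched by $M$ (otherwise $M \cup \{xy\}$ would be larger), and replacing the $M$-edge at $y$ by $xy$ produces a maximum matching containing $xy$. Restricting such a matching to $T - x - y$ shows $m(T) = m(T - x - y) + 1$. Combining everything gives $r(\widetilde{T}) = 2m(T - x - y) + 2 = 2(m(T) - 1) + 2 = 2m(T)$, which closes the induction.

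The one step demanding genuine care is the matching recursion $m(T) = m(T - x - y) + 1$: the inequality $m(T) \le m(T-x-y)+1$ is trivial, but the reverse needs the leaf-saturation argument above, and this is precisely where acyclicity of $T$ is used. (A more conceptual alternative avoids the matching count entirely: because $T$ is acyclic, a diagonal unitary switching $H(\widetilde{T}) \mapsto D^{-1} H(\widetilde{T}) D$ can be chosen to make every arc undirected, so $H(\widetilde{T})$ is unitarily similar to $A(T)$ and hence $r(\widetilde{T}) = r(T)$, after which $r(T) = 2m(T)$ is the classical rank formula for trees. I would nonetheless present the inductive proof as primary, since it relies only on the lemmas already established in this section.)
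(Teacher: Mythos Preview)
Your proof is correct. The paper does not supply its own proof of this lemma --- it is simply quoted from \cite{CHL} --- so there is nothing to compare against; your inductive argument via Lemmas~\ref{le:2.2} and~\ref{le:2.3} is the standard route. One small remark: the matching identity $m(T)=m(T-x-y)+1$ is exactly Lemma~\ref{le:2.7} of the paper and holds for \emph{any} graph with a pendant vertex, so acyclicity is not needed for that step; it enters only to guarantee that a pendant vertex exists at each stage of the induction.
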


\noindent\begin{lemma}\label{le:2.5}\cite{WYL}
Let $\widetilde{C}_{n}$ be a mixed cycle. Then
\begin{displaymath}
r(\widetilde{C_{n}})=\left\{\
\begin{array}{ll}
n-1, & \mathrm{if}~n~is~odd,~ \sigma(\widetilde{C}_{n})~is~odd;\\
n, & \mathrm{if}~n~is~odd,~ \sigma(\widetilde{C}_{n})~is~even;\\
n, & \mathrm{if}~n~is~even,~ \sigma(\widetilde{C}_{n})~is~odd;\\
n, & \mathrm{if}~n~is~even,~ n+\sigma(\widetilde{C}_{n})\equiv 2~(\mathrm{mod} ~4);\\
n-2, & \mathrm{if}~n~is~even,~ n+\sigma(\widetilde{C}_{n})\equiv 0~(\mathrm{mod} ~4).\\
\end{array}
\right.
\end{displaymath}
\end{lemma}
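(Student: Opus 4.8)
The plan is to compute the full spectrum of $H(\widetilde{C}_n)$ after putting it in a canonical form by a gauge (switching) transformation, and then to read off the rank as $n$ minus the number of zero eigenvalues. The key point is that conjugating $H(\widetilde{C}_n)$ by a diagonal unitary matrix $D=\mathrm{diag}(d_1,\dots,d_n)$ with $|d_j|=1$ yields $D^{\ast}H(\widetilde{C}_n)D$, which is unitarily similar to $H(\widetilde{C}_n)$ and hence has the same rank and the same spectrum. Writing the cycle as $v_1v_2\cdots v_nv_1$, I would set $d_1=1$ and then choose $d_2,\dots,d_n$ successively so that each edge $v_jv_{j+1}$ with $1\le j\le n-1$ acquires Hermitian weight $1$. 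The only remaining edge $v_nv_1$ then carries the entire product of the original Hermitian phases around the cycle, namely $\prod_{j}h_{v_jv_{j+1}}=i^{f-b}$, where $f$ and $b$ are the numbers of forward and backward arcs. Thus $H(\widetilde{C}_n)$ is unitarily similar to $M=P+P^{\ast}$, where $P$ is the unitary weighted cyclic shift with $P_{j,j+1}=1$ for $j<n$ and $P_{n,1}=i^{f-b}$.

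Next I would diagonalize $M$. Since $P$ is unitary and $P^{n}=i^{f-b}I$, its eigenvalues are the $n$-th roots
\[
\lambda_k=\exp\!\Big(\tfrac{i}{n}\big(\tfrac{\pi}{2}(f-b)+2\pi k\big)\Big),\qquad k=0,1,\dots,n-1,
\]
and because $P^{\ast}=P^{-1}$ the eigenvalues of $M$ are $\mu_k=\lambda_k+\lambda_k^{-1}=2\cos\phi_k$ with $\phi_k=\tfrac{1}{n}\big(\tfrac{\pi}{2}(f-b)+2\pi k\big)$. Recalling that $\sigma(\widetilde{C}_n)=|f-b|$ and that cosine is even, I may replace $f-b$ by $\sigma:=\sigma(\widetilde{C}_n)$ throughout without affecting the $\mu_k$. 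Consequently $r(\widetilde{C}_n)=n-\eta(\widetilde{C}_n)$, where $\eta(\widetilde{C}_n)$ is the number of indices $k\in\{0,\dots,n-1\}$ with $\mu_k=0$.

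Finally I would count these zeros. We have $\mu_k=0$ precisely when $\phi_k\equiv\tfrac{\pi}{2}\ (\mathrm{mod}\ \pi)$, which after clearing denominators is equivalent to $k\equiv\tfrac{n-\sigma}{4}\ (\mathrm{mod}\ n)$ or $k\equiv\tfrac{-n-\sigma}{4}\ (\mathrm{mod}\ n)$; each congruence contributes exactly one index when the corresponding fraction is an integer, i.e. when $n\equiv\sigma\ (\mathrm{mod}\ 4)$, respectively $n+\sigma\equiv0\ (\mathrm{mod}\ 4)$. A short parity analysis across the five cases then gives the table: if $n$ is odd and $\sigma$ odd, then $n-\sigma$ and $n+\sigma$ are even and differ by $2\sigma\equiv2\ (\mathrm{mod}\ 4)$, so exactly one is divisible by $4$ and $\eta=1$; if $n$ is odd and $\sigma$ even, or $n$ even and $\sigma$ odd, both $n\pm\sigma$ are odd and $\eta=0$; if $n$ is even with $n+\sigma\equiv2\ (\mathrm{mod}\ 4)$ then also $n-\sigma\equiv2\ (\mathrm{mod}\ 4)$ and $\eta=0$; and if $n$ is even with $n+\sigma\equiv0\ (\mathrm{mod}\ 4)$ then both $n\pm\sigma\equiv0\ (\mathrm{mod}\ 4)$ and $\eta=2$. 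In each case $r(\widetilde{C}_n)=n-\eta$ matches the claimed value.

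The step requiring the most care is the gauge reduction together with the bookkeeping that the surviving edge weight equals exactly $i^{f-b}$ and that switching preserves the spectrum; once this canonical form is in hand, the crux is the congruence count. The one genuinely delicate point there is the case $n+\sigma\equiv0\ (\mathrm{mod}\ 4)$, where I must verify that the two solutions are distinct modulo $n$: their difference is congruent to $\tfrac{n}{2}$, which is nonzero modulo $n$ for $n\ge3$, so the nullity is genuinely $2$ rather than $1$. This confirms the final entry $r(\widetilde{C}_n)=n-2$ and completes the argument.
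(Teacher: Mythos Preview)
Your argument is correct. The gauge (switching) reduction to a cycle with all edge-weights $1$ except a single edge carrying $i^{f-b}$ is valid since conjugation by a diagonal unitary preserves the spectrum, and your bookkeeping $\prod_j h_{v_jv_{j+1}}=i^{f}(-i)^{b}=i^{f-b}$ is right. From there the identification $M=P+P^{\ast}$ with $P$ unitary and $P^{n}=i^{f-b}I$ gives the eigenvalues $2\cos\phi_k$, and your congruence count of the zeros is accurate in all five cases; the check that the two solutions in the case $n+\sigma\equiv 0\ (\mathrm{mod}\ 4)$ differ by $n/2\not\equiv 0\ (\mathrm{mod}\ n)$ is exactly the point that secures $\eta=2$.

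As for comparison: the present paper does not prove this lemma at all --- it is quoted from \cite{WYL} as a known result. So there is no ``paper's own proof'' to match. Your spectral route via switching normalization is a clean, self-contained alternative to the determinant/characteristic-polynomial computations (via the Sachs-type coefficient formula of Lemma~\ref{le:2.9}) that are more typical in this literature; it has the advantage of yielding the full spectrum of $\widetilde{C}_n$, not just the rank.
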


A \emph{gain graph} is a graph with the additional structure that each
orientation of an edge is given a group element, called a \emph{gain}, which is the inverse of
the group element assigned to the opposite orientation.  Denote by $\Phi=(G, \mathbb{T}, \varphi)$  a \emph{complex unit gain graph}, where $G$ is the underlying graph of $\Phi$, $\mathbb{T}= \{z\in \mathbb{C} : |z|=1\}$ is the \emph{circle group} and $\varphi: \overrightarrow{E} \rightarrow  \mathbb{T}$ is a gain function such that $\varphi(e_{ij})=\varphi(e_{ji})^{-1}=\overline{\varphi(e_{ji})}$.   The \emph{adjacency matrix} associated to  $\Phi$ is the Hermitian matrix $A(\Phi) = (a_{ij})_{n \times n}$, where $a_{ij}=\varphi(e_{ij})$ if $v_{i}$ is adjacent to $v_{j}$, otherwise $a_{ij}=0$.
  Mixed graph is a special case when  $\varphi(\overrightarrow{E}) \subseteq \{1, i, -i\}$ for $\mathbb{T}$-gain graph. Hence, the following Lemma \ref{le:2.8} follows directly from Theorem 2.8 in \cite{LWX}.
\noindent\begin{lemma}\label{le:2.8}\cite{LWX}
Let $\widetilde{H}$ be a mixed graph and $\widetilde{C}_{n}$ be a mixed cycle.  Assume that $\widetilde{G}$ is the mixed graph obtained by identifying a vertex of $\widetilde{C}_{n}$ with a vertex of $\widetilde{H}$ (i.e. $V(\widetilde{C}_{n})\cap V(\widetilde{H})=x$). Let $\widetilde{F}=\widetilde{H}-x$. Then \begin{displaymath}
r(\widetilde{G})=\left\{\
\begin{array}{ll}
r(\widetilde{H})+n-1, & \mathrm{if}~n~is~odd,~ \sigma(\widetilde{C}_{n})~is~odd;\\
r(\widetilde{H})+n-2, & \mathrm{if}~n~is~even,~ \sigma(\widetilde{C}_{n})\equiv n~(\mathrm{mod}~4);\\
r(\widetilde{F})+n, & \mathrm{if}~n~is~even,~ either~ \sigma(\widetilde{C}_{n})+n\equiv 2~(\mathrm{mod}~4) ~or~ \sigma(\widetilde{C}_{n})~ is~ odd.
\end{array}
\right.
\end{displaymath}
\end{lemma}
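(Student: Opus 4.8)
The plan is to compute $r(\widetilde G)=\operatorname{rank}H(\widetilde G)$ by Schur-complement (Gaussian) elimination, exploiting that the cycle $\widetilde C_n$ meets $\widetilde H$ only at $x$. Write the cycle as $x=w_0,w_1,\dots,w_{n-1},w_0$ and let $Q=w_1w_2\cdots w_{n-1}$ be the path $\widetilde C_n-x$; the only edges joining $\{w_1,\dots,w_{n-1}\}$ to $V(\widetilde H)$ are $xw_1$ and $xw_{n-1}$, so in $H(\widetilde G)$ the block coupling the internal cycle vertices to $V(\widetilde H)$ is supported on the single row/column of $x$. Two elementary facts drive everything. First, a mixed even path is nonsingular (it is a tree, so $r=2m$ by Lemma~\ref{le:2.4}), and deleting an endpoint of a path of even order leaves an odd path, which is singular; hence the two corner \emph{diagonal} entries of the inverse of an even path vanish. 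Second, a one-line cofactor computation shows that the corner-to-corner entry of the inverse of a mixed even path is, up to an explicit sign, the product of its edge-gains; for the subpath of order $n-2$ relevant below this sign works out to $(-1)^{n/2}$. Throughout I write $\varphi(\widetilde C_n)=h_{xw_1}h_{w_1w_2}\cdots h_{w_{n-1}x}=i^{\,f-b}$ for the product of gains around the cycle, so that $\varphi(\widetilde C_n)=(-1)^{\sigma/2}$ when $\sigma$ is even and $\varphi(\widetilde C_n)=\pm i$ when $\sigma$ is odd.

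For $n$ even I would eliminate the even subpath $R=w_2\cdots w_{n-1}$ (of order $n-2$, nonsingular). Since $R$ is joined to the rest only through $w_1$ (edge $w_1w_2$) and $x$ (edge $w_{n-1}x$), its Schur complement acts on $V(\widetilde H)\cup\{w_1\}$, leaving $H(\widetilde H)$ unchanged except that $w_1$ becomes a vertex adjacent only to $x$, with zero diagonal and effective weight $h'=h_{w_1x}-h_{w_1w_2}\,(H(R)^{-1})_{w_2,w_{n-1}}\,h_{w_{n-1}x}$ (the corner-diagonal corrections at $w_1$ and $x$ vanish by the first fact). Using the second fact, $h_{xw_1}h'=1-(-1)^{n/2}\varphi(\widetilde C_n)$, so $h'=0$ exactly when $(-1)^{n/2}\varphi(\widetilde C_n)=1$, which—since $\varphi$ is real iff $\sigma$ is even—translates precisely into $\sigma\equiv n\pmod4$. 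In that case $w_1$ is isolated and contributes nothing, giving $r(\widetilde G)=(n-2)+r(\widetilde H)$ (Case~2); otherwise $w_1$ is a genuine pendant at $x$, and the linear-algebra content of Lemma~\ref{le:2.2} yields $r(\widetilde G)=(n-2)+r(\widetilde F)+2=r(\widetilde F)+n$ (Case~3, covering both $n+\sigma\equiv2\pmod4$ and $\sigma$ odd, where $\varphi=\pm i$ can never equal the real quantity $(-1)^{n/2}$).

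For $n$ odd the whole path $Q$ has even order $n-1$ and is nonsingular, so I would eliminate all of $Q$ at once. The Schur complement then perturbs only the diagonal entry at $x$, replacing $H(\widetilde H)$ by $H(\widetilde H)-c\,E_{xx}$ with $c=2\operatorname{Re}\!\big(h_{xw_1}(H(Q)^{-1})_{w_1,w_{n-1}}h_{w_{n-1}x}\big)=2s'\operatorname{Re}\varphi(\widetilde C_n)$ for a sign $s'=\pm1$ (the corner-diagonal terms vanishing as before). When $\sigma$ is odd, $\varphi(\widetilde C_n)=\pm i$ is purely imaginary, so $c=0$; the perturbation disappears, the reduced matrix is exactly $H(\widetilde H)$, and $r(\widetilde G)=(n-1)+r(\widetilde H)$, which is Case~1.

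The hard part will be the sign and phase bookkeeping: one must correctly relate the signed product of edge-gains around $\widetilde C_n$ to the combinatorial signature $\sigma=|f-b|$ (so that $\varphi=i^{\,f-b}$ and, for even $\sigma$, $\varphi=(-1)^{\sigma/2}$), and then combine this with the length-dependent sign $(-1)^{n/2}$ of the corner entry of the path inverse; it is exactly this interplay that upgrades the naive criterion ``$\varphi=\pm1$'' into the refined mod-$4$ conditions $\sigma\equiv n$ versus $n+\sigma\equiv2$. I would pin down these sign rules by a direct cofactor expansion, after which each of the three cases falls out mechanically. I note in passing that the configuration $n$ odd with $\sigma$ even is genuinely non-additive—there the elimination leaves a nonzero diagonal bump $c\,E_{xx}$ at $x$, whose effect on the rank is not determined by $r(\widetilde H)$ and $r(\widetilde F)$ alone—which is consistent with its absence from the statement.
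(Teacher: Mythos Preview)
Your Schur-complement approach is sound and the case analysis is correct: for even $n$ you eliminate the nonsingular subpath $R$ of order $n-2$, reducing to $\widetilde H$ with a single (possibly degenerate) pendant at $x$, and the vanishing criterion $h'=0\iff (-1)^{n/2}\varphi(\widetilde C_n)=1$ translates exactly into $\sigma\equiv n\pmod4$; for odd $n$ with odd $\sigma$ you eliminate the whole path $Q$ of even order $n-1$, the corner-diagonal entries of $H(Q)^{-1}$ vanish, and the remaining diagonal perturbation at $x$ is $-2\operatorname{Re}(\pm\varphi)=0$.

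The paper itself does not prove this lemma at all: it simply observes that mixed graphs are complex unit gain graphs with gains in $\{1,i,-i\}$, and invokes Theorem~2.8 of \cite{LWX} as a black box. Your route is therefore genuinely different and more self-contained. What the citation buys is brevity and the avoidance of the sign bookkeeping you flag as the hard part; what your argument buys is an explicit mechanism (and, incidentally, an explanation of \emph{why} the odd-$n$/even-$\sigma$ case is missing: the nonzero diagonal bump $cE_{xx}$ genuinely cannot be resolved from $r(\widetilde H)$ and $r(\widetilde F)$ alone). If you carry out the cofactor computation you outline---$\det H(P_k)=(-1)^{k/2}$ for an even mixed path and $(H(P_k)^{-1})_{1,k}=(-1)^{k/2-1}\overline{h_{12}\cdots h_{k-1,k}}^{\,-1}$ up to the conventions you fix---the three cases do fall out mechanically, so the proposal is complete once those signs are nailed down.
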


\noindent\begin{lemma}\label{le:2.12}\cite{CG1,HHLG}
For any mixed graph $\widetilde{G}$, $r(\widetilde{G})\neq 2m(G)-2c(G)+1$.
\end{lemma}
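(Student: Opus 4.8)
The plan is to argue by contradiction. Assume the statement fails and let $\widetilde{G}$ be a counterexample with $|V(\widetilde{G})|$ as small as possible, so that $r(\widetilde{G}) = 2m(G) - 2c(G) + 1$. First I would clear away the reducible configurations. If $\widetilde{G}$ is disconnected or has an isolated vertex, then by Lemma~\ref{le:2.3} and the additivity of $r$, $m$ and $c$ over components, together with the lower bound $r(\widetilde{G}_i) \ge 2m(G_i) - 2c(G_i)$ of Inequality~(1), exactly one component must itself satisfy $r = 2m - 2c + 1$, contradicting minimality. If $\widetilde{G}$ has a pendant vertex $x$ with unique neighbour $y$, then $r(\widetilde{G}) = r(\widetilde{G} - x - y) + 2$ by Lemma~\ref{le:2.2} and $m(G) = m(G - x - y) + 1$, so $r(\widetilde{G} - x - y) = 2m(G - x - y) - 2c(G) + 1$; since $c$ is non-increasing under vertex deletion, either $c(G - x - y) = c(G)$, giving a smaller counterexample, or $c(G - x - y) \le c(G) - 1$, which forces $r(\widetilde{G} - x - y) \le 2m(G - x - y) - 2c(G - x - y) - 1$, contradicting the lower bound of Inequality~(1). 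Hence we may assume $\widetilde{G}$ is connected with $\delta(\widetilde{G}) \ge 2$; in particular $c(G) \ge 1$.

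Next I would dispose of the cases in which some cycle is attached to the rest of $\widetilde{G}$ at a single vertex. If $\widetilde{G}$ is itself a cycle $\widetilde{C}_n$, then Lemma~\ref{le:2.5} gives $r(\widetilde{C}_n) \in \{n - 2,\, n - 1,\, n\}$ with the value fixed by the parities of $n$ and $\sigma(\widetilde{C}_n)$, while $2m(C_n) - 2c(C_n) + 1$ equals $n - 1$ when $n$ is even and $n - 2$ when $n$ is odd; matching the five cases of Lemma~\ref{le:2.5} against these two targets produces a contradiction in every instance. If some cycle $\widetilde{C}_n$ of $\widetilde{G}$ meets the remainder of $\widetilde{G}$ in exactly one vertex $x$ (for example a pendant cycle, or a leaf block that happens to be a cycle), set $\widetilde{H} = \widetilde{G} - (V(C_n) \setminus \{x\})$ and $\widetilde{F} = \widetilde{H} - x$ and apply Lemma~\ref{le:2.8}. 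Using $c(G) = c(H) + 1$ together with the elementary matching identities $m(G) = m(H) + \tfrac{n-1}{2}$ (for $n$ odd) and $m(G) = m(F) + \tfrac{n}{2}$ (for $n$ even), each branch of Lemma~\ref{le:2.8} reduces $r(\widetilde{G}) = 2m(G) - 2c(G) + 1$ to a relation of the form $r(\widetilde{H}) = 2m(H) - 2c(H) + \varepsilon$ or $r(\widetilde{F}) = 2m(F) - 2c(F) + \varepsilon$ with $\varepsilon \in \{-1, +1\}$; here $\varepsilon = -1$ contradicts the lower bound of Inequality~(1), and $\varepsilon = +1$ yields a strictly smaller counterexample (or, in the borderline subcases, is ruled out with the help of Theorem~\ref{th:1.1}).

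The remaining, and hardest, case is that $\widetilde{G}$ is connected with $\delta(\widetilde{G}) \ge 2$, is not a cycle, and no cycle of $\widetilde{G}$ meets the rest of $\widetilde{G}$ at a single vertex; a short argument with the block tree shows this forces $\widetilde{G}$ to have two distinct cycles sharing a vertex. Here I would first use a degree count: if $\delta(\widetilde{G}) \ge 3$ then $|E(G)| \ge \tfrac{3}{2}|V(G)|$, so $c(G) = |E(G)| - |V(G)| + 1 \ge \tfrac{1}{2}|V(G)| + 1 > m(G)$, whence $2m(G) - 2c(G) + 1 < 0 \le r(\widetilde{G})$, a contradiction. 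So $\widetilde{G}$ has a vertex of degree $2$, and since every vertex of a $2$-connected block lies on a cycle, a routine reduction lets us assume such a vertex $v$ lies on a cycle of $\widetilde{G}$. Then $G - v$ is connected, $c(G - v) = c(G) - 1$ and $m(G) - 1 \le m(G - v) \le m(G)$; feeding these into Lemma~\ref{le:2.1} (namely $r(\widetilde{G}) - 2 \le r(\widetilde{G} - v) \le r(\widetilde{G})$) and comparing against Inequality~(1) should again force either a strictly smaller counterexample or a violation of Inequality~(1), with Theorem~\ref{th:1.1} and the induction hypothesis called on in the borderline case $r(\widetilde{G} - v) = 2m(G - v) - 2c(G - v)$.

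I expect this last step to be the main obstacle. Deleting a single degree-$2$ vertex couples the two uncontrolled quantities $r(\widetilde{G} - v)$ and $m(G - v)$, so one is driven into a further case analysis — according to whether $v$ lies on a cycle that shares vertices with another cycle, whether $G - v$ acquires pendant vertices (to be peeled again), and whether $m(G - v) = m(G)$ or $m(G) - 1$ — with Theorem~\ref{th:1.1} and the induction hypothesis invoked at each branch. If this bookkeeping becomes unwieldy, a cleaner route is to first establish, in the spirit of Lemma~\ref{le:2.8}, a reduction lemma for mixed $\widetilde{\theta}(p, l, q)$- and $\widetilde{\infty}(p, l, q)$-blocks hung at a single vertex, and to run the induction through these building blocks; this also meshes with the graph families $\mathfrak{X}_1, \mathfrak{X}_2, \mathfrak{Y}_1, \mathfrak{Y}_2$ appearing in Theorems~\ref{th:1.3}--\ref{th:1.5}.
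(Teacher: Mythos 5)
The paper does not actually prove this statement: Lemma~\ref{le:2.12} is quoted from \cite{CG1,HHLG}, where it is the main result of a dedicated argument, so your attempt can only be judged on its own merits. As such, it is a reduction plan rather than a proof, and the part that is missing is precisely the part that carries the whole difficulty. Your preliminary reductions (componentwise additivity via Lemma~\ref{le:2.3} and the lower bound of Inequality~(1); pendant $K_2$ deletion via Lemmas~\ref{le:2.2} and \ref{le:2.7}; the case of a single cycle via Lemma~\ref{le:2.5}) are sound. But already the pendant-cycle step has a hole: Lemma~\ref{le:2.8} as stated covers an odd pendant cycle only when its signature is odd, and in the remaining case ($n$ odd, $\sigma(\widetilde{C}_n)$ even, where the natural formula is $r(\widetilde{G})=r(\widetilde{F})+n$) the bookkeeping gives $r(\widetilde{F})=2m(H)-2c(H)-2$, which when $m(H)=m(F)+1$ and $c(H)=c(F)$ equals $2m(F)-2c(F)$; that is neither a violation of Inequality~(1) nor a smaller counterexample of the form $2m-2c+1$, so the branch does not close as you claim, and one is pushed into exactly the kind of ``graph attaining the lower bound'' analysis via Theorem~\ref{th:1.1} that you defer.

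The decisive gap is the final case. Your degree count shows only that some vertex of degree $2$ exists, not that one lies on a cycle (two cliques joined by a long path is a configuration where every cycle vertex has degree at least $3$), so ``a routine reduction lets us assume $v$ lies on a cycle'' is unsubstantiated. And even granting such a $v$, deleting it lands you, after the easy subcases, in the borderline situation $r(\widetilde{G}-v)=2m(G-v)-2c(G-v)$ with $r(\widetilde{G})=r(\widetilde{G}-v)+1$; here Theorem~\ref{th:1.1} tells you what $\widetilde{G}-v$ looks like, but you still must show that re-attaching a degree-$2$ vertex to such an extremal graph cannot raise the rank by exactly one, and no argument is offered for this. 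You acknowledge this yourself (``I expect this last step to be the main obstacle''), but that obstacle is the theorem: the published proofs handle it by a careful analysis of characteristic-polynomial coefficients (in the spirit of Lemma~\ref{le:2.9} and the parity argument of Lemma~\ref{le:3.2}) combined with a structural induction, not by the vertex-deletion bounds of Lemmas~\ref{le:2.1} and \ref{le:2.6} alone, which are too weak to separate the values $2m-2c$ and $2m-2c+1$ in the presence of intersecting cycles. So the proposal, as it stands, does not establish the lemma.
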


Denote by  $\beta(G)$  the number of cycles in graph $G$.  The \emph{characteristic polynomial} of a mixed graph $\widetilde{G}$ of order $n$ is the characteristic polynomial of the Hermitian adjacency matrix of $\widetilde{G}$, defined as $f(\widetilde{G}, \lambda)=\mathrm{det}|\lambda I_{n}-H(\widetilde{G})|=\sum_{i=0}^{n}a_{i}\lambda^{n-i}$, where $I_{n}$ denotes the unite matrix of order $n$.
\noindent\begin{lemma}\label{le:2.9}\cite{CHL}
Let $\widetilde{G}$ be a mixed graph of order $n$. Then the characteristic polynomial $f(\widetilde{G}, \lambda)=\sum_{i=0}^{n}a_{i}\lambda^{n-i}$
has coefficients equal to $$a_{i}=\sum_{\widetilde{B}} (-1)^{\frac{1}{2}\sigma(\widetilde{B})+\omega(B)}2^{\beta(B)},$$
where the sum runs over all basic subgraphs $\widetilde{B}$ of order $i$ in $\widetilde{G}$.
\end{lemma}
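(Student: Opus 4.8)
The plan is to recover the formula from the standard expansion of the characteristic polynomial as a signed sum of principal minors, and then to evaluate each minor combinatorially via the Leibniz formula. Writing $H=H(\widetilde{G})$ and letting $H[U]$ denote the principal submatrix indexed by $U\subseteq V(\widetilde{G})$, one has
$$\det(\lambda I_n-H)=\sum_{i=0}^n(-1)^i\Big(\sum_{|U|=i}\det H[U]\Big)\lambda^{n-i},$$
so that $a_i=(-1)^i\sum_{|U|=i}\det H[U]$. Since $\det H[U]=\sum_\pi \mathrm{sgn}(\pi)\prod_{v\in U}h_{v,\pi(v)}$ and the diagonal of $H$ is zero, only fixed-point-free permutations $\pi$ contribute, and a nonzero term forces each $v\pi(v)$ to be an edge of $\widetilde{G}[U]$. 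Thus every contributing $\pi$ realizes a spanning subgraph of $\widetilde{G}[U]$ whose components are $K_2$'s (from the transpositions of $\pi$) and cycles (from the longer permutation-cycles): precisely a Sachs subgraph spanning $U$. A Sachs subgraph with $\beta$ cycles arises from exactly $2^\beta$ permutations, one per independent choice of orientation of its cycles, and summing over all $U$ of size $i$ is the same as summing over all Sachs subgraphs of order $i$ in $\widetilde{G}$.

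The key step is to evaluate the contribution of each cycle after summing over its two orientations. A $K_2$ on $\{u,v\}$ gives $h_{uv}h_{vu}=|h_{uv}|^2=1$ times the transposition sign $-1$. For a cycle $\widetilde{C}$ of length $k$, a fixed traversal produces $\prod h_{\mathrm{arc}}=i^{\,f-b}$, where undirected edges contribute $1$ and forward, backward arcs contribute $i$ and $-i=i^{-1}$ respectively; the reverse traversal yields the conjugate $i^{\,b-f}$, and both carry the permutation-cycle sign $(-1)^{k-1}$. Their sum is $(-1)^{k-1}\,2\,\mathrm{Re}(i^{\,f-b})$. Now $\mathrm{Re}(i^{\,f-b})=0$ when $\sigma(\widetilde{C})=|f-b|$ is odd and equals $(-1)^{\sigma(\widetilde{C})/2}$ when $\sigma(\widetilde{C})$ is even. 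This is exactly the mechanism by which every non-basic Sachs subgraph (one possessing a cycle of odd signature) contributes zero, leaving only basic subgraphs in the final sum.

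It then remains to assemble the signs. For a basic subgraph $\widetilde{B}$ of order $i$ with $p$ components equal to $K_2$ and $s=\beta(B)$ cycles of lengths $k_1,\dots,k_s$, the product of all cycle signs is $(-1)^p\prod_{j}(-1)^{k_j-1}=(-1)^{\,i-p-s}=(-1)^i(-1)^{\omega(B)}$, using $2p+\sum_jk_j=i$ and $\omega(B)=p+s$. The surviving cycle factors multiply to $\prod_j 2(-1)^{\sigma(C_j)/2}=2^{\beta(B)}(-1)^{\sigma(\widetilde{B})/2}$, where the exponent identity $\sum_j\tfrac12\sigma(C_j)\equiv\tfrac12\sigma(\widetilde{B})\pmod 2$ holds because each $f_j-b_j$ is even on a basic subgraph, so the global count $f-b=\sum_j(f_j-b_j)$ and its half behave additively. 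Multiplying through by the outer $(-1)^i$ from $a_i=(-1)^i\sum\det H[U]$ cancels it, and summing over all basic subgraphs of order $i$ yields $a_i=\sum_{\widetilde{B}}(-1)^{\frac12\sigma(\widetilde{B})+\omega(B)}2^{\beta(B)}$, as claimed.

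The main obstacle I anticipate is the sign bookkeeping rather than the combinatorial correspondence itself: one must verify carefully that the two traversals of a cycle produce genuinely conjugate products (so the imaginary parts cancel and only $\mathrm{Re}(i^{\,f-b})$ survives), that the permutation signs combine with the leading $(-1)^i$ to give exactly $(-1)^{\omega(B)}$, and that the individual cycle phases $(-1)^{\sigma(C_j)/2}$ repackage into the single global signature $\sigma(\widetilde{B})$ via the parity identity above. Each of these is elementary, but together they form the delicate part of the argument.
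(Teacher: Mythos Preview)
Your argument is correct: it is the standard derivation of a Sachs-type coefficient theorem, specialized to the Hermitian adjacency matrix. The principal-minor expansion, the correspondence between fixed-point-free permutations and Sachs subgraphs, the vanishing of cycles with odd signature via $\mathrm{Re}(i^{f-b})=0$, and the sign assembly $(-1)^p\prod_j(-1)^{k_j-1}=(-1)^{i-\omega(B)}$ are all handled properly. Your parity identity $\sum_j\tfrac12\sigma(C_j)\equiv\tfrac12\sigma(\widetilde{B})\pmod 2$ is also fine, since each $f_j-b_j$ is even and $(-1)^{|x|/2}=(-1)^{x/2}$ for even $x$; you might state this last point explicitly, as it is the one place where the definition $\sigma=|f-b|$ (rather than $f-b$) could cause confusion.

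There is nothing to compare against here: the paper does not prove Lemma~\ref{le:2.9} at all but simply quotes it from \cite{CHL}. Your write-up is a self-contained proof of a result the present paper uses as a black box.
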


\noindent\begin{lemma}\label{le:2.14}\cite{CHL}
Let $\widetilde{G}$ be a mixed graph containing a pendant vertex $x$ with unique neighbor $y$. If $r(\widetilde{G})= 2m(G)-2c(G)$, then
\begin{enumerate}[(a)]
\item $y$ lies outside any mixed cycle of $\widetilde{G}$;
\item $r(\widetilde{G}-x-y)=2m(G-x-y)-2c(G-x-y)$.
\end{enumerate}
\end{lemma}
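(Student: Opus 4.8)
The plan is to argue directly from Lemma~\ref{le:2.2}, which gives $r(\widetilde{G})=r(\widetilde{G}-x-y)+2$, together with the matching number identity for pendant vertices, namely $m(G)=m(G-x-y)+1$. Set $\widetilde{G}'=\widetilde{G}-x-y$ and $G'=G-x-y$. Combining the hypothesis $r(\widetilde{G})=2m(G)-2c(G)$ with Lemma~\ref{le:2.2} yields $r(\widetilde{G}')=2m(G)-2c(G)-2=2m(G')-2c(G)$. On the other hand, by the general lower bound (Inequality~\eqref{eq:1}) applied to $\widetilde{G}'$ we always have $r(\widetilde{G}')\ge 2m(G')-2c(G')$. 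Hence $2m(G')-2c(G)\ge 2m(G')-2c(G')$, i.e. $c(G')\ge c(G)$. Since deleting vertices cannot create cycles, in fact $c(G')\le c(G)$, so $c(G')=c(G)$. This already forces $r(\widetilde{G}')=2m(G')-2c(G')$, which is conclusion (b).

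For conclusion (a), I would leverage the equality $c(G-x-y)=c(G)=c(G')$ just established. Removing a vertex $y$ together with a pendant neighbour $x$ reduces the cyclomatic number by the number of independent cycles passing through $y$ (more precisely, $c(G)-c(G-y)$ equals $d_G(y)-1$ minus the drop in the number of components, but since $x$ is pendant, deleting $x$ first changes nothing cyclic, and then deleting $y$ from $G-x$ removes all cycles through $y$). So $c(G')=c(G)$ is equivalent to the statement that no mixed cycle of $\widetilde{G}$ passes through $y$; this is conclusion (a). I should spell out this cyclomatic-number bookkeeping carefully: if $y$ lay on some cycle of $G$, then because $x$ is pendant at $y$, that cycle avoids $x$, so it is a cycle of $G-x$ destroyed by the removal of $y$, giving $c(G-x-y)<c(G-x)=c(G)$, a contradiction.

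The main obstacle — really the only subtle point — is justifying the cyclomatic-number arithmetic cleanly, in particular handling the possibility that deleting $y$ disconnects $G-x$ (which could in principle leave $c$ unchanged even while destroying a cycle). The resolution is that $c(H)=|E(H)|-|V(H)|+\omega(H)$ is exactly the first Betti number, and for a subgraph obtained by vertex deletion one has $c(H')\le c(H)$ with equality if and only if the deleted vertices lie on no cycle of $H$; invoking this standard fact (or proving it in one line via the rank of the cycle space) closes the gap. Everything else is the linear-algebraic identity from Lemma~\ref{le:2.2} and the elementary matching identity, both already available.
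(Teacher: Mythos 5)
Your argument is correct. Note that the paper does not prove Lemma~\ref{le:2.14} at all --- it is quoted from \cite{CHL} --- so there is no in-paper proof to compare against; but your derivation uses exactly the ingredients that the standard argument (and the paper's own analogous Lemmas \ref{le:4.2}--\ref{le:4.4} in the $\kappa$-setting) relies on: Lemma~\ref{le:2.2}, Lemma~\ref{le:2.7}, the lower bound of Inequality~(\ref{eq:1}), and the fact that deleting a vertex lying on a cycle strictly decreases $c$. The only difference is ordering: the usual proof gets (a) first by contradiction ($r(\widetilde{G})=r(\widetilde{G}-x-y)+2\geq 2m(G-x-y)-2c(G-x-y)+2\geq 2m(G)-2c(G)+2$ if $y$ were on a cycle) and then reads off (b), whereas you first force $c(G-x-y)=c(G)$ and deduce both; the cyclomatic-number fact you flag is indeed the only nontrivial point, and your bridge/Betti-number justification of it is sound.
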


For a simple graph, we have the following lemmas.
\noindent\begin{lemma}\label{le:2.6}\cite{CHL}
Let $G$ be a graph with a vertex $x$. Then $m(G)-1\leq m(G-x)\leq m(G)$.
\end{lemma}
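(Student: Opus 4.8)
The statement to prove is Lemma~\ref{le:2.6}: $m(G)-1\le m(G-x)\le m(G)$.

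\medskip

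\textbf{Proof proposal.} The plan is to establish the two inequalities separately, both by elementary arguments about how a maximum matching interacts with a single vertex. For the upper bound $m(G-x)\le m(G)$, I would observe that any matching $M'$ of $G-x$ is also a matching of $G$, since $G-x$ is a subgraph of $G$ and $M'$ uses only edges of $G-x$. Hence $m(G-x)=|M'|\le m(G)$ when $M'$ is chosen to be a maximum matching of $G-x$.

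\medskip

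For the lower bound $m(G)-1\le m(G-x)$, I would start from a maximum matching $M$ of $G$, so $|M|=m(G)$, and split into two cases according to whether $x$ is saturated by $M$. If $x$ is not covered by any edge of $M$, then every edge of $M$ lies in $G-x$, so $M$ itself is a matching of $G-x$ and $m(G-x)\ge |M|=m(G)\ge m(G)-1$. If $x$ is covered by some edge $e=xy\in M$, then $M\setminus\{e\}$ is a matching of $G-x$ (it avoids $x$ entirely, and it uses only edges of $G$ not incident to $x$, hence edges of $G-x$), so $m(G-x)\ge |M|-1=m(G)-1$. In both cases the desired inequality holds, completing the proof.

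\medskip

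I do not anticipate any real obstacle here: the only point requiring a moment's care is the bookkeeping in the second case, namely checking that deleting the one edge of $M$ incident to $x$ leaves a legitimate matching of the induced subgraph $G-x$, which is immediate since matchings are preserved under taking subgraphs and $M\setminus\{e\}$ has no edge touching $x$. The statement is a standard and routine fact about matching numbers, included only for later reference.
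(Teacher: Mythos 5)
Your argument is correct and complete: the upper bound follows because every matching of $G-x$ is a matching of $G$, and the lower bound follows by deleting from a maximum matching $M$ of $G$ the at most one edge incident with $x$, leaving a matching of $G-x$ of size at least $m(G)-1$. The paper itself gives no proof of this lemma (it is quoted from \cite{CHL} as a known fact), and your proof is exactly the standard argument one would find there, so there is nothing to flag.
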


\noindent\begin{lemma}\label{le:2.7}\cite{MWT}
Let $G$ be a graph with a pendant vertex $x$. Let $y$ be the unique neighbor of $x$. Then $m(G-y)=m(G-x-y)=m(G)-1$.
\end{lemma}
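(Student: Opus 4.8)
The plan is to prove the two equalities separately, starting with the easier identity $m(G-y)=m(G-x-y)$ and then establishing $m(G-x-y)=m(G)-1$ by a pair of matching constructions. For the first identity, I would observe that since $x$ is a pendant vertex whose \emph{only} neighbor is $y$, deleting $y$ leaves $x$ as an isolated vertex; that is, $G-y$ is precisely $G-x-y$ together with the isolated vertex $x$. Since an isolated vertex is incident to no edge, it can never belong to any matching, so any maximum matching of $G-y$ is already a matching of $G-x-y$ and vice versa. Hence $m(G-y)=m(G-x-y)$, which disposes of the left equality once the right one is proved.

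For the upper bound $m(G-x-y)\le m(G)-1$, I would take a maximum matching $M'$ of $G-x-y$ and adjoin the pendant edge $xy$. Since no edge of $M'$ meets $x$ or $y$ (these vertices are absent from $G-x-y$), the set $M'\cup\{xy\}$ is a matching of $G$, giving $m(G)\ge |M'|+1=m(G-x-y)+1$.

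For the reverse inequality $m(G-x-y)\ge m(G)-1$, I would start from a maximum matching $M$ of $G$ and exhibit a matching of $G-x-y$ of size $m(G)-1$ by a short case analysis. If $xy\in M$, then $M\setminus\{xy\}$ avoids both $x$ and $y$, so it is a matching of $G-x-y$ of size $m(G)-1$. Otherwise $xy\notin M$; because $xy$ is the unique edge incident to the pendant vertex $x$, this forces $x$ to be $M$-unmatched, and by maximality of $M$ the vertex $y$ cannot also be unmatched (else $M\cup\{xy\}$ would be strictly larger), so $y$ is matched by $M$ to some vertex $z\neq x$. Deleting the edge $yz$ yields $M\setminus\{yz\}$, which contains no edge incident to $x$ (none did) or to $y$ (the only such edge was $yz$), hence is a matching of $G-x-y$ of size $m(G)-1$. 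Combining the two inequalities gives $m(G-x-y)=m(G)-1$, and with the first paragraph the full statement follows.

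This argument is entirely elementary, so there is no genuine obstacle; the only point requiring care is the case distinction in the lower bound, where one must use the defining property of a pendant vertex—namely that $xy$ is its sole incident edge—to conclude that $x$ being unmatched together with maximality of $M$ pins down the structure around $y$. I would present the two cases explicitly to make clear that in each we discard exactly one edge of $M$ and land inside $G-x-y$.
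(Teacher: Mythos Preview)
Your argument is correct and complete. Note that the paper does not actually prove this lemma: it is quoted from \cite{MWT} and stated without proof, so there is no in-paper argument to compare against. What you have written is the standard elementary proof of this fact, and every step is sound; in particular, the case split in the lower bound correctly uses the pendant property to force $y$ to be $M$-saturated when $xy\notin M$.
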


\noindent\begin{lemma}\label{le:2.10}\cite{CG}
Let $G$ be a graph with at least one cycle. Then the even cycles of $G$ are pairwise vertex-disjoint if and only if for any vertex $u$ of $G$ that lies on a even cycle,
$\kappa(G-u)=\kappa(G)-1$.
\end{lemma}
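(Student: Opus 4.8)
\textbf{Proof proposal for Lemma \ref{le:2.10}.}

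The plan is to prove both directions of the equivalence, exploiting the characterization of how the number of even cycles $\kappa(G)$ changes when a single vertex is deleted. The key observation is that for any vertex $u$ lying on some even cycle, the quantity $\kappa(G)-\kappa(G-u)$ counts, in a signed sense, the effect of removing $u$ on the population of even cycles, and this quantity equals exactly $1$ precisely when $u$ lies on a \emph{unique} even cycle and that even cycle meets no other even cycle through $u$.

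First I would prove the forward direction. Assume all even cycles of $G$ are pairwise vertex-disjoint, and let $u$ be a vertex lying on an even cycle. By the disjointness hypothesis, $u$ lies on exactly one even cycle $C$. Deleting $u$ destroys precisely this even cycle $C$ (since $C$ passes through $u$, it is no longer present in $G-u$), while every other even cycle of $G$ avoids $u$ and therefore survives intact in $G-u$. It remains to check that deleting $u$ creates no \emph{new} even cycle. Here I would argue that any cycle in $G-u$ is already a cycle in $G$, because deleting a vertex only removes cycles and never produces a cycle that was not present before. Hence the even cycles of $G-u$ are exactly the even cycles of $G$ other than $C$, giving $\kappa(G-u)=\kappa(G)-1$.

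For the converse, I would argue by contraposition: suppose the even cycles of $G$ are \emph{not} pairwise vertex-disjoint, and produce a vertex $u$ on an even cycle with $\kappa(G-u)\neq \kappa(G)-1$. If two even cycles $C_1,C_2$ share a common vertex $u$, then deleting $u$ destroys both $C_1$ and $C_2$ simultaneously, so at least two even cycles are lost; since deleting $u$ creates no new cycle (as above), we get $\kappa(G-u)\leq \kappa(G)-2 < \kappa(G)-1$. Thus such a $u$ witnesses the failure of the condition $\kappa(G-u)=\kappa(G)-1$. This establishes the contrapositive and hence the converse.

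The main obstacle is the claim that deleting a vertex never creates a new even cycle, together with the careful bookkeeping in the non-disjoint case where the two even cycles sharing $u$ might, after deletion of $u$, leave behind fragments that recombine with other edges into further even cycles. The cleanest way to handle this is to note that \emph{every} cycle of $G-u$ is a cycle of $G$ avoiding $u$, so the map ``even cycles of $G-u$'' $\hookrightarrow$ ``even cycles of $G$ not through $u$'' is a bijection; once this is in hand, $\kappa(G)-\kappa(G-u)$ equals the number of even cycles of $G$ passing through $u$, and the equivalence follows immediately by comparing this number to $1$. I would therefore isolate and prove this bijection first, as it drives both directions of the argument uniformly.
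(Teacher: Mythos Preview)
Your argument is correct. The bijection you isolate---that the even cycles of $G-u$ are precisely the even cycles of $G$ avoiding $u$, because $G-u$ is an induced subgraph and vertex deletion can only destroy cycles, never create them---immediately gives $\kappa(G)-\kappa(G-u)=\#\{\text{even cycles of }G\text{ through }u\}$, from which both directions follow at once.

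As for comparison with the paper: the paper does not actually prove Lemma~\ref{le:2.10}. It is quoted from \cite{CG} (Chen and Guo) and listed in the Preliminaries section without proof, so there is no in-paper argument to compare against. Your self-contained elementary proof is a genuine addition rather than a reproduction of anything in the present paper.
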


\noindent\begin{lemma}\label{le:2.11}\cite{MWT}
Let $T$ be a tree with at least one edge. If $r(T)=r(T-W)$ for a subset $W$ of $V(T)$, then there is a pendant vertex $v$ of $T$ such that $v\notin W$.
\end{lemma}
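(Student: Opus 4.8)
The plan is to first reduce the rank statement to a purely combinatorial statement about matching numbers, and then prove that statement by induction using a longest-path analysis together with pendant-$K_2$ deletions. Since $T$ and every induced subgraph $T-W$ are forests, Lemma~\ref{le:2.4} (applied to the underlying undirected graphs, where $H$-rank coincides with ordinary rank) gives $r(T)=2m(T)$ and $r(T-W)=2m(T-W)$. Hence the hypothesis $r(T)=r(T-W)$ is equivalent to $m(T)=m(T-W)$, and I shall prove the contrapositive of the conclusion: \emph{if $F$ is a forest with at least one edge and every pendant vertex of $F$ lies in $W$, then $m(F-W)<m(F)$.} Proving this for forests rather than only trees is what makes the induction go through, since deleting vertices produces forests.

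I would argue by strong induction on $|V(F)|$, assuming that all pendant vertices of $F$ belong to $W$. First, an isolated vertex of $F$ may be deleted without changing any matching number, so after applying the induction hypothesis to $F$ minus an isolated vertex I may assume $F$ has minimum degree at least $1$; thus every component is a tree on at least two vertices. Now choose a longest path $u_0u_1\cdots u_k$ in some component. Then $x:=u_0$ is a pendant vertex, so $x\in W$; its neighbour $y:=u_1$ is a quasi-pendant, and by maximality of the path every neighbour of $y$ except $u_2$ is a pendant vertex and hence also lies in $W$. The argument then splits according to whether $y$ and $u_2$ lie in $W$.

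The two easy cases are dispatched by a single pendant-$K_2$ deletion. If $y\in W$, then by Lemma~\ref{le:2.7} $m(F-x-y)=m(F)-1$, and since $\{x,y\}\subseteq W$ we have $F-W=(F-x-y)-(W\setminus\{x,y\})$, whence $m(F-W)\le m(F-x-y)=m(F)-1<m(F)$. If instead $y\notin W$ but $u_2\in W$, then $y$ is isolated in $F-W$, so deleting it changes nothing and again $m(F-W)=m((F-W)-y)=m\big((F-x-y)-(W\setminus\{x\})\big)\le m(F-x-y)=m(F)-1$. Both cases contradict $m(F-W)=m(F)$, as required.

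The main obstacle is the remaining case $y\notin W$ and $u_2\notin W$, where $y$ survives as a pendant vertex of $F-W$ attached to $u_2$. Here I would suppose for contradiction that $m(F-W)=m(F)$ and apply Lemma~\ref{le:2.7} to the pendant $y$ of $F-W$ to get $m\big((F-W)-y-u_2\big)=m(F-W)-1=m(F)-1$. Writing $\widetilde{F}:=F-x-y$ (so $m(\widetilde F)=m(F)-1$ by Lemma~\ref{le:2.7}) and noting that $(F-W)-y-u_2=\widetilde F-\big((W\setminus\{x\})\cup\{u_2\}\big)$, the displayed equality forces $m\big(\widetilde F-((W\setminus\{x\})\cup\{u_2\})\big)=m(\widetilde F)$. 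Applying the induction hypothesis to the smaller forest $\widetilde F$ with deletion set $(W\setminus\{x\})\cup\{u_2\}$ then yields a pendant vertex $v$ of $\widetilde F$ with $v\notin(W\setminus\{x\})\cup\{u_2\}$; in particular $v\neq u_2$ and $v\notin W$. The crucial observation that closes the loop is that deleting $x$ and $y$ can create at most one new pendant vertex, namely $u_2$: indeed $x$ is a leaf, and every neighbour of $y$ other than $u_2$ is itself a leaf of $F$ by the longest-path property, so such a neighbour becomes isolated rather than pendant. Hence every pendant vertex of $\widetilde F$ is either a pendant vertex of $F$ or equals $u_2$; since $v\neq u_2$, the vertex $v$ is a pendant vertex of $F$ with $v\notin W$, contradicting the assumption that all pendant vertices of $F$ lie in $W$. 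Small forests for which $\widetilde F$ has no edge (so the induction hypothesis does not apply) satisfy $m(F)=1$ and can be checked directly. I expect this last case, and in particular the bookkeeping that the deletion can create only the single new pendant $u_2$, to be the delicate point of the proof.
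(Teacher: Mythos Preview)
The paper does not prove Lemma~\ref{le:2.11}; it is quoted from \cite{MWT} as a preliminary result, so there is no in-paper proof to compare against. Your argument is correct: the reduction to matching numbers via Lemma~\ref{le:2.4} is the natural first step, and the longest-path plus pendant-$K_2$-deletion induction on forests handles all cases. One point worth making explicit is that in Case~3 the hypothesis $u_2\notin W$ forces $k\ge 3$ (for $k=2$ the endpoint $u_2$ would itself be a pendant of $F$ and hence lie in $W$), so the edge $u_2u_3$ survives in $\widetilde F$ and the induction hypothesis genuinely applies; this observation also makes your closing caveat about ``small forests for which $\widetilde F$ has no edge'' unnecessary in that case.
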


\noindent\begin{lemma}\label{le:2.13}\cite{SU}
Let $G$ be a bipartite graph. Then $m^{\ast}(G)=m(G)$.
\end{lemma}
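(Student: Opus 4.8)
The plan is to prove the two inequalities $m^{\ast}(G)\geq m(G)$ and $m^{\ast}(G)\leq m(G)$ separately, with the bipartiteness hypothesis entering only in the second. The first inequality is immediate and was already observed in the introduction: any matching $M$ of $G$ gives a fractional matching by setting $f(e)=1$ for $e\in M$ and $f(e)=0$ otherwise, so $\sum_{e\in E(G)}f(e)=|M|$, and taking $M$ to be a maximum matching yields $m^{\ast}(G)\geq m(G)$.

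For the reverse inequality I would pass to the dual covering quantity. Recall that a \emph{vertex cover} of $G$ is a set $C\subseteq V(G)$ meeting every edge, and let $\tau(G)$ denote the minimum size of a vertex cover. First I would show that $m^{\ast}(G)\leq \tau(G)$ holds for \emph{every} graph: if $f$ is any fractional matching and $C$ is any vertex cover, then since each edge has at least one endpoint in $C$,
\[
\sum_{e\in E(G)} f(e)\;\leq\;\sum_{v\in C}\sum_{e\in \Gamma(v)} f(e)\;\leq\;\sum_{v\in C}1\;=\;|C|,
\]
where the first inequality counts each weight $f(e)$ at least once (at an endpoint of $e$ lying in $C$) and the second uses the fractional-matching constraint $\sum_{e\in \Gamma(v)}f(e)\leq 1$. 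Taking the supremum over fractional matchings $f$ and the minimum over vertex covers $C$ gives $m^{\ast}(G)\leq \tau(G)$.

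The final step invokes K\"onig's theorem, which asserts that for a \emph{bipartite} graph the minimum vertex cover and the maximum matching have the same size, $\tau(G)=m(G)$. Chaining the three facts yields
\[
m(G)\;\leq\;m^{\ast}(G)\;\leq\;\tau(G)\;=\;m(G),
\]
forcing equality throughout, which is the assertion. The only substantial ingredient is K\"onig's theorem itself, so I regard the reduction to it as the crux: the covering inequality $m^{\ast}(G)\leq\tau(G)$ is valid for all graphs, and it is precisely bipartiteness that collapses $\tau(G)$ back down to $m(G)$.

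As an alternative that stays closer to the half-integrality fact from \cite{SU} quoted in the remark preceding this lemma, one may instead begin from an optimal fractional matching $f$ with $f(e)\in\{0,\tfrac12,1\}$ and appeal to the standard structure theorem for such matchings: the edges carrying weight $\tfrac12$ induce a vertex-disjoint union of \emph{odd} cycles. Since a bipartite graph contains no odd cycle, no half-weights can occur, so $f$ is $0/1$-valued and hence a genuine matching, which again gives $m^{\ast}(G)=\sum_{e\in E(G)}f(e)\leq m(G)$ and, with the trivial lower bound, the desired equality.
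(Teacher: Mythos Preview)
Your proposal is correct. Note, however, that the paper does not supply its own proof of this lemma: it is quoted verbatim from the reference \cite{SU} and used as a black box, so there is no in-paper argument to compare against. Both routes you outline---the K\"onig/weak-duality chain $m(G)\leq m^{\ast}(G)\leq\tau(G)=m(G)$ and the half-integrality argument that the $\tfrac12$-edges of an extremal half-integral matching must form vertex-disjoint odd cycles, hence cannot exist in a bipartite graph---are standard and valid; the second is in fact the line taken in \cite{SU} and dovetails nicely with the half-integrality remark already stated just before this lemma in the paper.
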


\noindent\begin{lemma}\label{le:2.15}\cite{FHLL}
Let $G$ be a graph with  a pendant odd cycle $C_{q}$ and $x$ be a vertex of $C_{q}$  of degree $3$. Let $F=G-V(C_{q})$ and $H=F+x$. Then $m(G)=m(H)+\frac{q-1}{2}$.
\end{lemma}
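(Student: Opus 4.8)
The statement is the final Lemma~\ref{le:2.15}, a purely combinatorial identity on the matching number: if $G$ has a pendant odd cycle $C_q$ whose unique degree-$3$ vertex is $x$, and we set $F = G - V(C_q)$ and $H = F + x$, then $m(G) = m(H) + \frac{q-1}{2}$. The plan is to prove the two inequalities $m(G) \geq m(H) + \frac{q-1}{2}$ and $m(G) \leq m(H) + \frac{q-1}{2}$ separately, exploiting the fact that an odd cycle $C_q$ on $q$ vertices has a maximum matching of size $\frac{q-1}{2}$, which saturates all but exactly one vertex of the cycle.

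For the lower bound, I would start from a maximum matching $M_H$ of $H$, so $|M_H| = m(H)$. The key observation is that $x$ is the only vertex shared between $H$ and $C_q$, and within $C_q$ the vertices other than $x$ form a path $P_{q-1}$ on an even number $q-1$ of vertices, which admits a perfect matching $M'$ of size $\frac{q-1}{2}$. Since $V(P_{q-1}) = V(C_q) \setminus \{x\}$ is disjoint from $V(H)$, the union $M_H \cup M'$ is a matching of $G$ of size $m(H) + \frac{q-1}{2}$, giving $m(G) \geq m(H) + \frac{q-1}{2}$.

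For the upper bound, I would take a maximum matching $M$ of $G$ and analyze how $M$ interacts with the cycle $C_q$. Because $C_q$ is a pendant cycle, the only edges leaving $C_q$ meet the rest of the graph at $x$; edges of $M$ with both endpoints in $V(C_q)\setminus\{x\}$ lie in the path $P_{q-1}$. I would split into cases according to whether the vertex $x$ is matched by an edge of $M$ lying inside $C_q$ or not. In either case, one restricts $M$ to the two parts: the edges incident to $V(C_q)\setminus\{x\}$ can contribute at most $\frac{q-1}{2}$ edges (the maximum matching size of the $(q-1)$-vertex path, adjusting for whether $x$ is used), and the remaining edges, together with any edge through $x$, form a matching of $H$. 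Carefully accounting shows $m(G) \leq m(H) + \frac{q-1}{2}$. A cleaner route is to invoke Lemma~\ref{le:2.7}: deleting the pendant structure through repeated pendant-$K_2$ deletions along $P_{q-1}$ (peeling the path $C_q - x$ two vertices at a time) reduces $m$ by exactly $1$ per deletion while leaving $H$ as the crucial remainder, yielding the identity directly.

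The main obstacle is the upper bound: one must rule out the possibility that a maximum matching of $G$ exploits the edge through $x$ to do better than simply combining a maximum matching of $H$ with a near-perfect matching of $C_q$. The cleanest argument is to observe that $x$ can participate in at most one matching edge; if that edge lies in $C_q$, then the remaining cycle vertices span $P_{q-2}$ (odd order) contributing at most $\frac{q-3}{2}$, while $x$'s edge plus the contribution from $F$ is bounded by $m(H)$; summing gives at most $m(H) + \frac{q-1}{2}$. If $x$'s matching edge (if any) lies outside $C_q$, the $q-1$ path vertices contribute at most $\frac{q-1}{2}$ and the rest is a matching of $H$. Both cases respect the bound, and combined with the lower bound this forces equality.
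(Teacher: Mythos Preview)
The paper does not prove Lemma~\ref{le:2.15}; it is quoted from \cite{FHLL} as a preliminary result, so there is no in-paper argument to compare against. Your overall two-inequality strategy is the natural one and the lower bound and Case~A of the upper bound are fine, but two points deserve correction.

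First, in Case~B of the upper bound (when $x$ is matched by an edge inside $C_q$), the assertion that ``$x$'s edge plus the contribution from $F$ is bounded by $m(H)$'' is not true in general: it amounts to $1+m(F)\le m(H)$, which fails whenever $m(H)=m(F)$. (This can occur even though $x$ has a neighbour in $F$; take $F=K_2$ on $\{a,b\}$ with $x$ adjacent only to $a$, so $H\cong P_3$ and $m(H)=m(F)=1$.) The repair is simply to group the terms differently: all edges of $M$ lying in $C_q$ form a matching of the odd cycle and hence number at most $\tfrac{q-1}{2}$, while the remaining edges of $M$ lie entirely in $F$ and hence number at most $m(F)\le m(H)$. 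Summing gives $|M|\le m(H)+\tfrac{q-1}{2}$ without the false intermediate inequality.

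Second, the proposed ``cleaner route'' via repeated pendant-$K_2$ deletions along $C_q-x$ does not work as stated: in $G$ every vertex of $C_q$ has degree at least $2$ (the non-$x$ vertices have degree exactly $2$), so there is no pendant vertex on the cycle from which to start peeling, and Lemma~\ref{le:2.7} cannot be applied directly to $G$. With the corrected Case~B above, the case analysis already gives a complete proof, so this alternative is unnecessary.
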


Let $M$ be a matching of a graph $G$. We say that a vertex $v$ of $G$ is $M$-\emph{saturated} if it is an endpoint of some edge of $M$. Similarly, we say that a vertex $v$ of $G$ is $M$-\emph{unsaturated} if it is not any endpoint of any edge of $M$. An $M$-\emph{alternating path}
in $G$ is a path whose edges are alternately in $M$ and $E(G)\setminus M$. If neither its origin nor its
terminus is saturated by $M$, the path is called an $M$-\emph{augmenting path}.

\noindent\begin{lemma}\label{le:2.16}(Berge's Theorem)\cite{BM}
A matching $M$ of a graph $G$ is a maximum matching if and only if $G$ contains no $M$-augmenting path.
\end{lemma}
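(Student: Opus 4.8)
The plan is to establish the biconditional through its two contrapositives, relying on the symmetric-difference construction standard in matching theory. I would first dispose of the easy direction: supposing $G$ has an $M$-augmenting path $P=x_0x_1\cdots x_{2k+1}$, I show $M$ cannot be maximum. By the definition of an $M$-alternating path together with the fact that both ends $x_0$ and $x_{2k+1}$ are $M$-unsaturated, the terminal edges $x_0x_1$ and $x_{2k}x_{2k+1}$ lie in $E(G)\setminus M$; hence $P$ has $k+1$ edges outside $M$ and $k$ edges inside $M$. Replacing the $k$ matching edges of $P$ by the $k+1$ non-matching ones, that is, forming $M\,\triangle\,E(P)$, produces again a matching (every internal vertex of $P$ retains exactly one selected incident edge, each endpoint acquires exactly one, and all vertices off $P$ are unaffected) of size $|M|+1$. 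So $M$ is not maximum.

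For the converse I would argue the contrapositive: if $M$ is not maximum, pick a matching $M'$ with $|M'|>|M|$ and examine the spanning subgraph $F$ on the edge set $M\,\triangle\,M'$. Since each of $M,M'$ is a matching, every vertex meets at most one edge of each, so every vertex has degree at most $2$ in $F$. Consequently each component of $F$ is a path or a cycle, and along any such component the edges alternate between $M$ and $M'$, because two consecutive edges at a vertex cannot both lie in the same matching. Every cycle component is therefore even and carries equally many $M$- and $M'$-edges, contributing $0$ to the surplus $|M'|-|M|>0$. Hence some path component $P$ must contain strictly more $M'$-edges than $M$-edges, which forces both of its terminal edges to belong to $M'$.

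The crux is to check that this $P$ is $M$-augmenting in $G$ and not merely alternating inside $F$. I would verify that each endpoint $v$ of $P$ is $M$-unsaturated. The terminal edge of $P$ at $v$, lying in $F$ and belonging to $M'$, is in $M'\setminus M$; were $v$ also incident to some $e\in M$, then $e$ is distinct from that terminal edge, and if $e\in M'$ as well, $v$ would meet two distinct $M'$-edges, contradicting that $M'$ is a matching, while if $e\in M\setminus M'\subseteq E(F)$, then $e$ would be a second $F$-edge at $v$, prolonging $P$ and contradicting that $v$ is an endpoint. Thus both endpoints of $P$ are $M$-unsaturated, and since $P$ alternates with both terminal edges outside $M$, it is exactly an $M$-augmenting path, completing the contrapositive. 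I expect the degree-at-most-two decomposition of $M\,\triangle\,M'$ and this final global-saturation check to be the only subtle points; the edge-flip in the forward direction is routine.
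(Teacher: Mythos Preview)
Your proof is correct and is precisely the standard symmetric-difference argument for Berge's Theorem. Note, however, that the paper does not supply its own proof of this lemma: it is stated with a citation to Bondy and Murty \cite{BM} and used as a known result. So there is no paper-proof to compare against; your argument matches the classical textbook treatment that the citation points to.
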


\section{Proof of Theorem \ref{th:1.2}}
~~~~In this section, we will  prove that  $2m(G)-2\kappa(G)\leq r(\widetilde{G})\leq 2m^{\ast}(G)$ for a mixed graph $\widetilde{G}$. At first, we need the following two lemmas.

\noindent\begin{lemma}\label{le:3.1}\cite{CG}
If an even-cycle-free graph has a perfect matching, then it has a unique maximum matching.
\end{lemma}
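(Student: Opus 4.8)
The plan is to prove the contrapositive-flavoured statement directly: assuming $G$ is even-cycle-free and has a perfect matching $M$, I will show $M$ is the \emph{only} maximum matching, which by Berge's Theorem (Lemma~\ref{le:2.16}) amounts to showing that $G$ admits no matching of the same size other than $M$. Since a perfect matching is automatically maximum, the real content is \emph{uniqueness}. First I would recall the standard symmetric-difference argument: if $M'$ is another perfect matching of $G$ with $M'\neq M$, then $M\triangle M'$ is a nonempty subgraph of $G$ in which every vertex has degree $0$ or $2$, hence a disjoint union of cycles, and each such cycle alternates between edges of $M$ and edges of $M'$. An alternating cycle must have even length. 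Therefore $M\triangle M'$ contains an even cycle, contradicting the hypothesis that $G$ is even-cycle-free. This forces $M'=M$, so the perfect matching is unique.

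Next I would upgrade this from "unique perfect matching" to "unique \emph{maximum} matching". Let $M'$ be any maximum matching of $G$; since $M$ is perfect, $|M'|=|M|$ and so $M'$ is also perfect (it saturates all $n$ vertices). Hence $M'$ falls under the uniqueness argument of the previous paragraph, and $M'=M$. Thus $M$ is the unique maximum matching of $G$, which is exactly the assertion of the lemma.

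The only place where care is needed — and what I expect to be the one genuinely load-bearing step — is the claim that each connected component of $M\triangle M'$ is an \emph{even} cycle rather than merely a cycle: this is where the alternation between $M$ and $M'$ is used, and it is precisely the point at which the even-cycle-free hypothesis bites. Everything else (degrees in $M\triangle M'$ being at most $2$; a connected graph with all degrees exactly $2$ being a cycle; a perfect matching being maximum) is routine. If one prefers to avoid symmetric differences entirely, an alternative is an induction on $|V(G)|$: a forest-with-odd-cycles graph having a perfect matching must contain a vertex $v$ lying on no even cycle with a neighbour $u$ forced to be matched to $v$ in every perfect matching (e.g. a pendant vertex, or a vertex of a pendant odd cycle after handling that cycle), delete $\{u,v\}$, note the deleted graph is still even-cycle-free with a perfect matching, and apply the induction hypothesis; but the symmetric-difference proof is shorter and is the one I would write up.
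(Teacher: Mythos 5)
Your proof is correct. Note that the paper itself gives no argument for this lemma --- it is simply quoted from the signed-graph paper of Chen and Guo [CG] --- so your symmetric-difference proof supplies exactly the standard argument that the paper omits: two distinct perfect matchings would yield a nonempty $M\triangle M'$ that is a disjoint union of $M$/$M'$-alternating (hence even) cycles, contradicting even-cycle-freeness, and any maximum matching of a graph with a perfect matching is itself perfect, so uniqueness of the perfect matching gives uniqueness of the maximum matching. One small remark: the appeal to Berge's Theorem (Lemma~\ref{le:2.16}) in your opening paragraph is not actually used anywhere and can be dropped; the rest is complete as written.
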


\noindent\begin{lemma}\label{le:3.2}
If a mixed graph $\widetilde{G}$ has a unique maximum matching, then $r(\widetilde{G})\geq 2m(G)$.
\end{lemma}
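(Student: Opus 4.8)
The plan is to proceed by induction on the order $n$ of $\widetilde{G}$ (equivalently on $|V(G)|$), and to reduce to a pendant-vertex situation. If $\widetilde{G}$ has no edges then $m(G)=0$ and the claim is trivial, so assume $m(G)\geq 1$. Let $M$ be the unique maximum matching of $G$. The first key observation I would establish is that $G$ must have a pendant vertex, or at least a vertex from which I can peel off a pendant $K_2$: indeed, if every vertex had degree at least $2$, one can show (using Berge's Theorem, Lemma \ref{le:2.16}, together with the structure of a graph in which every vertex lies on a cycle) that either $G$ has an $M$-augmenting path or $G$ admits a second maximum matching obtained by rotating $M$ around a cycle all of whose edges alternate with respect to $M$ — contradicting uniqueness. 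So I would argue: uniqueness of the maximum matching forces the existence of an $M$-saturated pendant-type configuration. More precisely, the cleanest route is: since $M$ is unique and maximum, every leaf of $G$ is $M$-saturated and matched to its neighbor; pick such a pendant vertex $x$ with neighbor $y$, with $xy\in M$.

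Next I would apply Lemma \ref{le:2.2} to get $r(\widetilde{G}) = r(\widetilde{G}-x-y) + 2$. It remains to show $r(\widetilde{G}-x-y)\geq 2m(G-x-y)$ via the induction hypothesis, for which I must verify that $\widetilde{G}-x-y$ again has a \emph{unique} maximum matching and that $m(G-x-y) = m(G)-1$. The matching-number equality is immediate from Lemma \ref{le:2.7} (or just because $xy\in M$, $M\setminus\{xy\}$ is a maximum matching of $G-x-y$ of size $m(G)-1$, and a larger one would extend by $xy$). For uniqueness: suppose $N$ is a maximum matching of $G-x-y$ other than $M\setminus\{xy\}$; then $N\cup\{xy\}$ is a maximum matching of $G$ (it has size $m(G)-1+1=m(G)$ and $x$ is pendant so $xy$ is the only edge at $x$ and $y$ is $N$-unsaturated since $N\subseteq E(G-x-y)$), and $N\cup\{xy\}\neq M$, contradicting the uniqueness hypothesis for $\widetilde{G}$. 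Hence $G-x-y$ has the unique maximum matching $M\setminus\{xy\}$, the induction hypothesis applies to the (smaller) mixed graph $\widetilde{G}-x-y$, and we conclude
\[
r(\widetilde{G}) = r(\widetilde{G}-x-y)+2 \geq 2m(G-x-y)+2 = 2(m(G)-1)+2 = 2m(G).
\]

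The step I expect to be the main obstacle is the very first one: rigorously proving that a graph with a unique maximum matching must contain a pendant vertex that is matched by $M$ (and more generally that I can always find the pendant $K_2$ to strip off). The degree-$\geq 2$ case needs care — one has to rule out the possibility of a graph all of whose vertices lie on cycles while still admitting a unique maximum matching, and the argument combining Berge's Theorem with $M$-alternating cycles is the delicate part. Once that structural lemma is in hand, the induction is routine. (Alternatively, one could cite or adapt a known statement that a graph with a unique perfect/maximum matching has a pendant vertex, which would shortcut this obstacle; but I would include the argument for completeness.)
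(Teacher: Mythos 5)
Your induction scheme hinges on the structural claim that a graph with a unique maximum matching (and at least one edge) must contain a pendant vertex, and you yourself flag this as the main obstacle. Unfortunately the claim is false, so the whole reduction collapses. Take $G=\infty(3,2,3)$: two triangles $abc$ and $def$ joined by the single edge $cd$. Every vertex has degree at least $2$, yet $M=\{ab,cd,ef\}$ is the unique (perfect, hence maximum) matching: any perfect matching must use $ab$ (otherwise $b$ cannot be saturated) and symmetrically $ef$, which forces $cd$. Moreover your proposed dichotomy for the minimum-degree-$\ge 2$ case fails here as well: $M$ is perfect, so there is no $M$-augmenting path, and the only cycles of $G$ are odd triangles, so there is no $M$-alternating cycle to ``rotate'' $M$ around. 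Thus uniqueness of the maximum matching does not force a pendant $K_2$ to strip off, and Lemma \ref{le:2.2} (which genuinely needs a pendant vertex) cannot be invoked. Kotzig-type results only give a matching bridge, not a pendant vertex, and deleting the two ends of a general bridge does not give the needed inequality $r(\widetilde{G})\ge r(\widetilde{G}-x-y)+2$, so that route does not repair the argument either. The parts of your proof after the structural step (leaves are $M$-saturated, $m(G-x-y)=m(G)-1$, uniqueness passes to $G-x-y$) are correct, but they only cover graphs that do admit such a pendant configuration.

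For contrast, the paper proves the lemma without any structural reduction: by the coefficient formula of Lemma \ref{le:2.9}, the coefficient $a_{2m}$ of the characteristic polynomial is a sum over basic subgraphs of order $2m$; the unique maximum matching yields exactly one such subgraph consisting of $m$ copies of $K_2$, contributing $(-1)^m$, while every other basic subgraph of order $2m$ contains a cycle and contributes an even integer $\pm 2^{\beta(B)}$. Hence $a_{2m}$ is odd, so $a_{2m}\neq 0$ and $r(\widetilde{G})\ge 2m(G)$. Some parity or determinant-expansion argument of this kind is needed precisely to handle graphs such as the two-triangle example above, which your induction cannot reach.
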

\begin{proof} Let $m= m(G)$ and $f(\widetilde{G}, \lambda)=\sum_{i=0}^{n}a_{i}\lambda^{n-i}$ be characteristic polynomial of $\widetilde{G}$.
By Lemma \ref{le:2.9}, $$a_{i}=\sum_{\widetilde{B}} (-1)^{\frac{1}{2}\sigma(\widetilde{B})+\omega(B)}2^{\beta(B)},$$
where the sum runs over all basic subgraphs $\widetilde{B}$ of order $i$ in $\widetilde{G}$.

 Since $\widetilde{G}$ has a unique maximum matching, $\widetilde{G}$ has a unique basic subgraph of order $2m$ with $m$
copies of $K_{2}$. As a result, the expression for $a_{2m}$ includes a summand $(-1)^{m}$. Apart from this summand, each other component in the expression must be an even
integer, since each takes  the form $(-1)^{\frac{1}{2}\sigma(\widetilde{B})+\omega(B)}2^{\beta(B)}$ with $\beta(B)\geq 1$. Therefore, $a_{2m}$ is an odd integer and  $a_{2m}\neq 0$. From this, it follows that $r(\widetilde{G})\geq 2m(G)$.
 \end{proof}

\noindent\textbf{Proof of Theorem \ref{th:1.2}:} We first prove the upper bound. Let $r=r(\widetilde{G})$ and $f(\widetilde{G}, \lambda)=\sum_{i=0}^{n}a_{i}\lambda^{n-i}$ be characteristic polynomial of $\widetilde{G}$.
By Lemma \ref{le:2.9}, $$a_{i}=\sum_{\widetilde{B}} (-1)^{\frac{1}{2}\sigma(\widetilde{B})+\omega(B)}2^{\beta(B)},$$
where the sum runs over all basic subgraphs $\widetilde{B}$ of order $i$ in $\widetilde{G}$. So $a_{r}\neq 0$ as $r=r(\widetilde{G})$. Then there is a basic subgraph $\widetilde{H}$ of order $r$. Since a basic subgraph consists of disjoint cycles and single edges, $r=2m^{\ast}(H)\leq 2m^{\ast}(G)$.

Now we prove the lower bound. We apply induction on $\kappa(G)$. Suppose that $\kappa(G)=0$. Let $M$ be a maximum matching of $G$. Let $\widetilde{F}$ be the subgraph of $\widetilde{G}$ induced by all endpoints of the edges  of $M$. Clearly, $F$ contains no even cycle and has a perfect matching. By Lemma \ref{le:3.1}, $F$ has a unique maximum matching. By Lemma \ref{le:3.2}, $r(\widetilde{F})\geq 2m(F)=2m(G)$. Since $\widetilde{F}$ is an induced subgraph of $\widetilde{G}$, $r(\widetilde{G})\geq r(\widetilde{F})\geq 2m(G)$.

 Now assume that $\kappa(G)\geq 1$. Let $u$ be a vertex on some even cycle of $G$. Obviously, $\kappa(G-u)\leq \kappa(G)-1$. By induction hypothesis, $r(\widetilde{G}-u)\geq 2m(G-u)-2\kappa(G-u)$. By Lemmas \ref{le:2.1} and \ref{le:2.6}, we have $r(\widetilde{G})\geq r(\widetilde{G}-u)\geq 2m(G-u)-2\kappa(G-u)\geq 2(m(G)-1)-2(\kappa(G)-1)=2m(G)-2\kappa(G)$.
 ~~~~~~~~~~~~~~~~~~~~~~~~~~~~~~~~~~~~~~~~~~~~~~~~~~~~~~~~~~~~~~~~~~~~~~~~~~~~~~~~~~~~~~~~~~~~~~~~~~~~\quad $\square$

\section{Proofs of Theorems \ref{th:1.3} and \ref{th:1.4}}

~\quad By the proof of Theorem \ref{th:1.2}, the above  inequalities  become equalities when  $r(\widetilde{G})= 2m(G)- 2\kappa(G)$. Hence we have the following Lemma \ref{le:4.1}.
\noindent\begin{lemma}\label{le:4.1}
Let $\widetilde{G}$ be a mixed graph and $r(\widetilde{G})= 2m(G)- 2\kappa(G)$, where
$0\leq \kappa(G)\leq c(G)$. Then
 for any vertex $u$ on any even cycle of $G$, $\kappa(G-u)=\kappa(G)-1$, $m(G-u)=m(G)-1$, $r(\widetilde{G})=r(\widetilde{G}-u)$ and $r(\widetilde{G}-u)= 2m(G-u)- 2\kappa(G-u)$.

\end{lemma}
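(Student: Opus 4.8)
The plan is to read off everything from the chain of inequalities established in the proof of Theorem~\ref{th:1.2}, specialized to the case where the final output equals $2m(G)-2\kappa(G)$. Concretely, suppose $r(\widetilde{G})=2m(G)-2\kappa(G)$ with $0\le\kappa(G)\le c(G)$. If $\kappa(G)=0$ the statement about even-cycle vertices is vacuous, so assume $\kappa(G)\ge 1$ and pick a vertex $u$ lying on some even cycle of $G$. The induction step in the proof of Theorem~\ref{th:1.2} gives the sandwich
\[
2m(G)-2\kappa(G)=r(\widetilde{G})\ \ge\ r(\widetilde{G}-u)\ \ge\ 2m(G-u)-2\kappa(G-u)\ \ge\ 2(m(G)-1)-2(\kappa(G)-1)=2m(G)-2\kappa(G),
\]
where the first inequality is Lemma~\ref{le:2.1}, the second is the induction hypothesis (i.e.\ the already-proved lower bound applied to $\widetilde{G}-u$, legitimate since $\kappa(G-u)\le\kappa(G)-1$), and the last two use Lemmas~\ref{le:2.6} and the trivial bound $\kappa(G-u)\ge\kappa(G)-1$ being forced to equality. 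Since the two ends agree, every inequality in the chain is an equality.

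First I would extract $r(\widetilde{G})=r(\widetilde{G}-u)$ from the equality of the first two terms. Next, equality of the last term forces simultaneously $m(G-u)=m(G)-1$ (from Lemma~\ref{le:2.6}, whose lower bound $m(G-u)\ge m(G)-1$ must be tight) and $\kappa(G-u)=\kappa(G)-1$ (since $\kappa(G-u)\le\kappa(G)-1$ from deleting a vertex on an even cycle, and the reverse is needed for the chain to close). Finally, $r(\widetilde{G}-u)=2m(G-u)-2\kappa(G-u)$ follows because it equals $2m(G)-2\kappa(G)$, which equals $r(\widetilde{G})=r(\widetilde{G}-u)$. That is exactly the four asserted equalities.

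There is essentially no obstacle here — the lemma is a bookkeeping consequence of the fact that when a lower-bound chain of inequalities is tight at both endpoints, each link is tight. The only point requiring a word of care is that the induction hypothesis quoted mid-chain is simply the lower bound half of Theorem~\ref{th:1.2} applied to the smaller graph $\widetilde{G}-u$ (which has strictly fewer even cycles), so it is already available; I would state this explicitly rather than re-running the induction. I would also note at the start that for $\kappa(G)=0$ there is nothing to prove, so the statement is really about the case $\kappa(G)\ge 1$, and that the conclusion holds for \emph{every} choice of even-cycle vertex $u$ because the argument above used an arbitrary such $u$.
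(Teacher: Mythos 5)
Your argument is correct and is essentially the paper's own: the paper derives Lemma \ref{le:4.1} precisely by noting that, when $r(\widetilde{G})=2m(G)-2\kappa(G)$, every link in the inequality chain from the lower-bound proof of Theorem \ref{th:1.2} (Lemma \ref{le:2.1}, the lower bound applied to $\widetilde{G}-u$, Lemma \ref{le:2.6}, and $\kappa(G-u)\le\kappa(G)-1$) must be tight, which gives the four stated equalities exactly as you extract them. One minor slip: in your first paragraph you refer to the ``trivial bound $\kappa(G-u)\ge\kappa(G)-1$'', whereas the chain actually uses $\kappa(G-u)\le\kappa(G)-1$ (the direction you correctly invoke in your second paragraph), so this is only a typo and does not affect the proof.
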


\noindent\begin{lemma}\label{le:4.2}
Let $\widetilde{G}$ be a mixed graph and $r(\widetilde{G})= 2m(G)- 2\kappa(G)+1$, where
$0\leq \kappa(G)\leq c(G)$. Then
 for any vertex $u$ on any even cycle of $G$, $\kappa(G-u)=\kappa(G)-1$, $m(G-u)=m(G)-1$ and $r(\widetilde{G}-u)= 2m(G-u)- 2\kappa(G-u)+a_{u}$, where $a_{u}\in \{0,1\}$.

\end{lemma}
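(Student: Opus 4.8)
The statement to prove is an analogue of Lemma \ref{le:4.1}, but starting from the hypothesis $r(\widetilde{G})=2m(G)-2\kappa(G)+1$ instead of $r(\widetilde{G})=2m(G)-2\kappa(G)$. The plan is to fix a vertex $u$ lying on some even cycle of $G$ and deduce the three claimed facts one at a time, using the general lower bound of Theorem \ref{th:1.2} applied both to $\widetilde{G}$ and to $\widetilde{G}-u$, together with the elementary monotonicity lemmas (Lemmas \ref{le:2.1} and \ref{le:2.6}) and the relationship between $\kappa(G)$ and $\kappa(G-u)$ for vertices on even cycles. The first observation is that, as noted in the proof of Theorem \ref{th:1.2}, for any vertex $u$ on an even cycle we have $\kappa(G-u)\le \kappa(G)-1$, and combining $r(\widetilde{G})\ge r(\widetilde{G}-u)$ (Lemma \ref{le:2.1}), $r(\widetilde{G}-u)\ge 2m(G-u)-2\kappa(G-u)$ (Theorem \ref{th:1.2}), $m(G-u)\ge m(G)-1$ (Lemma \ref{le:2.6}) and $\kappa(G-u)\le\kappa(G)-1$ gives the chain
$$2m(G)-2\kappa(G)+1=r(\widetilde{G})\ge r(\widetilde{G}-u)\ge 2m(G-u)-2\kappa(G-u)\ge 2(m(G)-1)-2(\kappa(G)-1)=2m(G)-2\kappa(G).$$
Every quantity in this chain is wedged between $2m(G)-2\kappa(G)$ and $2m(G)-2\kappa(G)+1$, which are consecutive integers; since $r(\widetilde{G}-u)$ and $2m(G-u)-2\kappa(G-u)$ are integers, the slack is at most $1$ everywhere.

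Next I would pin down the parities/integer values precisely. From $2m(G-u)-2\kappa(G-u)\le 2m(G)-2\kappa(G)+1$ and the fact that the left side is even while the right side is odd, we actually get the stronger inequality $2m(G-u)-2\kappa(G-u)\le 2m(G)-2\kappa(G)$, i.e. $m(G-u)-\kappa(G-u)\le m(G)-\kappa(G)$. On the other hand $m(G-u)\ge m(G)-1$ and $\kappa(G-u)\le\kappa(G)-1$ force $m(G-u)-\kappa(G-u)\ge (m(G)-1)-(\kappa(G)-1)=m(G)-\kappa(G)$, so equality holds: $m(G-u)-\kappa(G-u)=m(G)-\kappa(G)$. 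Because $m(G-u)\ge m(G)-1$, $\kappa(G-u)\le\kappa(G)-1$ and the two deficits must exactly cancel, we conclude $m(G-u)=m(G)-1$ and $\kappa(G-u)=\kappa(G)-1$ (the only way two inequalities in the same direction summing to an equality can hold). This settles the first two assertions. Alternatively, $\kappa(G-u)=\kappa(G)-1$ also follows from Lemma \ref{le:2.10} once one knows the even cycles through $u$ behave correctly, but the counting argument above is self-contained and cleaner.

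Finally, with $m(G-u)=m(G)-1$ and $\kappa(G-u)=\kappa(G)-1$ in hand, we substitute back: $2m(G-u)-2\kappa(G-u)=2m(G)-2\kappa(G)$, and the chain of inequalities becomes
$$2m(G)-2\kappa(G)+1=r(\widetilde{G})\ge r(\widetilde{G}-u)\ge 2m(G)-2\kappa(G).$$
Hence $r(\widetilde{G}-u)\in\{2m(G)-2\kappa(G),\,2m(G)-2\kappa(G)+1\}=\{2m(G-u)-2\kappa(G-u),\,2m(G-u)-2\kappa(G-u)+1\}$, which is exactly the assertion $r(\widetilde{G}-u)=2m(G-u)-2\kappa(G-u)+a_u$ with $a_u\in\{0,1\}$. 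I do not expect a serious obstacle here: the argument is a sandwiching/parity bookkeeping exercise resting entirely on Theorem \ref{th:1.2} and Lemmas \ref{le:2.1} and \ref{le:2.6}. The one point to handle with a little care is justifying that $\kappa(G-u)\le\kappa(G)-1$ for a vertex on an even cycle — this is immediate since deleting $u$ destroys at least the even cycle it lies on while creating no new cycles — and keeping straight which of $r(\widetilde{G}-u)$, $r(G-u)$ notation refers to the mixed graph. No appeal to Lemma \ref{le:2.12} (which would rule out $a_u$ taking a forbidden value globally) is needed for this lemma, though it may be invoked later when these local facts are assembled.
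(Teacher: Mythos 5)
Your proposal is correct and rests on exactly the same ingredients as the paper's proof: Lemma \ref{le:2.1}, Lemma \ref{le:2.6}, the lower bound of Inequality (\ref{eq:2}), and the observation that $\kappa(G-u)\leq\kappa(G)-1$ for a vertex $u$ on an even cycle. The only difference is presentational — you run one sandwiching chain with a parity/exact-cancellation argument, whereas the paper derives each of the three conclusions by a separate contradiction yielding the impossible bound $r(\widetilde{G})\geq 2m(G)-2\kappa(G)+2$ — so this is essentially the same proof.
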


\begin{proof} Suppose on the contrary that $\kappa(G-u)\neq \kappa(G)-1$. So  $\kappa(G-u)\leq \kappa(G)-2$. By Lemmas \ref{le:2.1} and \ref{le:2.6} and  Inequality  (2),
$$\begin{aligned}
r(\widetilde{G})&\geq r(\widetilde{G}-u)\\
       &\geq 2m(G-u)-2\kappa(G-u)\\
       &\geq 2(m(G)-1)-2(\kappa(G)-2)\\
       &=2m(G)-2\kappa(G)+2,
\end{aligned}$$
a contradiction to $r(\widetilde{G}) = 2m(G)-2\kappa(G)+1$. Hence, $\kappa(G-u)=\kappa(G)-1$.

Suppose on the contrary that $m(G)= m(G-u)$.
By Lemma \ref{le:2.1} and Inequality (2),
$$\begin{aligned}
r(\widetilde{G})&\geq r(\widetilde{G}-u)\\
       &\geq 2m(G-u)-2\kappa(G-u)\\
       &=2m(G)-2(\kappa(G)-1)\\
       &=2m(G)-2\kappa(G)+2,
\end{aligned}$$
a contradiction to  $r(\widetilde{G})=2m(G)-2\kappa(G)+1$.
Hence, $m(G-u)=m(G)-1$.

Suppose on the contrary that $r(\widetilde{G}-u)\geq 2m(G-u)- 2\kappa(G-u)+2$. By Lemma \ref{le:2.1}, Inequality (2), $\kappa(G-u)=\kappa(G)-1$ and $m(G-u)=m(G)-1$,
$$\begin{aligned}
r(\widetilde{G})&\geq r(\widetilde{G}-u)\\
       &\geq 2m(G-u)-2\kappa(G-u)+2\\
       &= 2(m(G)-1)-2(\kappa(G)-1)+2\\
      &=2m(G)-2\kappa(G)+2,
\end{aligned}$$
a contradiction to  $r(\widetilde{G}) = 2m(G)-2\kappa(G)+1$. Hence, $r(\widetilde{G}-u)= 2m(G-u)- 2\kappa(G-u)+a_{u}$, where $a_{u}\in\{0,1\}$.
\end{proof}

\noindent\begin{lemma}\label{le:4.3}
Let $\widetilde{G}$ be a mixed graph  with a pendant vertex $u$, and $v$ be the unique neighbor of $u$ in $G$, $\widetilde{H}=\widetilde{G}-\{u,v\}$. If $r(\widetilde{G})= 2m(G)-2\kappa(G)+s$, where $s\in \{0,1\}$, then $v$ lies outside any even cycle of $G$.
\end{lemma}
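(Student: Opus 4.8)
The plan is to argue by contradiction. Suppose $v$ lies on some even cycle $\widetilde{C}$ of $G$. The idea is that having a pendant edge $uv$ attached at a vertex of an even cycle forces the rank to be too large, contradicting $r(\widetilde{G})= 2m(G)-2\kappa(G)+s$ with $s\in\{0,1\}$.

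First I would apply Lemma~\ref{le:2.2} to the pendant vertex $u$: $r(\widetilde{G})=r(\widetilde{H})+2$ where $\widetilde{H}=\widetilde{G}-\{u,v\}$. Then I would apply the lower bound of Inequality~(2) (Theorem~\ref{th:1.2}) to $\widetilde{H}$: $r(\widetilde{H})\geq 2m(H)-2\kappa(H)$. So $r(\widetilde{G})\geq 2m(H)-2\kappa(H)+2$. Now I need to compare $m(H)$ and $\kappa(H)$ with $m(G)$ and $\kappa(G)$. By Lemma~\ref{le:2.7}, since $u$ is pendant with neighbor $v$, $m(G-v)=m(G-u-v)=m(G)-1$, i.e. $m(H)=m(G)-1$. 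For the even-cycle count, the key observation is that deleting $v$ (which lies on the even cycle $\widetilde{C}$) destroys that cycle, and since we are also deleting $u$ which is pendant and lies on no cycle, we get $\kappa(H)=\kappa(G-v)\leq \kappa(G)-1$. Plugging in: $r(\widetilde{G})\geq 2(m(G)-1)-2(\kappa(G)-1)+2 = 2m(G)-2\kappa(G)+2$, which contradicts $r(\widetilde{G})=2m(G)-2\kappa(G)+s$ with $s\le 1$.

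The main obstacle — really the only subtle point — is establishing the inequality $\kappa(G-v)\leq\kappa(G)-1$ when $v$ lies on an even cycle. This is not immediate in general because deleting $v$ could in principle leave other even cycles intact while the ones through $v$ are destroyed, so I only get $\kappa(G-v)\le\kappa(G)-(\text{number of even cycles through }v)\le\kappa(G)-1$; that weak inequality is all I need. I should be careful that this uses only the fact that $v$ lies on \emph{at least one} even cycle and that every even cycle through $v$ is destroyed by removing $v$ — both elementary. (Lemma~\ref{le:2.10} gives the sharper statement $\kappa(G-v)=\kappa(G)-1$ under a disjointness hypothesis, but I do not want to assume disjointness here, so I rely on the trivial bound instead.) One should also double-check the degenerate possibility that $H$ is edgeless or has no cycles, but in those cases $\kappa(H)=0$ and the inequalities still go through, so no separate case analysis is needed.

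Assembling these: $r(\widetilde{G})=r(\widetilde{H})+2\geq 2m(H)-2\kappa(H)+2 \geq 2(m(G)-1)-2(\kappa(G)-1)+2=2m(G)-2\kappa(G)+2>2m(G)-2\kappa(G)+s$ for $s\in\{0,1\}$, the desired contradiction. Hence $v$ lies outside every even cycle of $G$, completing the proof.
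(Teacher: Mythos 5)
Your proposal is correct and follows essentially the same route as the paper: contradiction via Lemma~\ref{le:2.2}, Lemma~\ref{le:2.7}, the lower bound of Inequality~(2) applied to $\widetilde{H}$, and the observation that deleting $v$ kills at least one even cycle so $\kappa(H)\leq\kappa(G)-1$. Your extra care in justifying the trivial bound $\kappa(H)\leq\kappa(G)-1$ (rather than invoking Lemma~\ref{le:2.10}) is exactly the step the paper uses implicitly, so nothing is missing.
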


\begin{proof} Suppose that the assertion is false. By Lemmas \ref{le:2.2} and \ref{le:2.7} and Inequality (2),
$$\begin{aligned}
   r(\widetilde{G})&=r(\widetilde{H})+2 \\
          &\geq 2m(H)-2\kappa(H)+2 \\
          &\geq 2m(G)-2-2(\kappa(G)-1)+2 \\
          &=2m(G)-2\kappa(G)+2,
 \end{aligned}$$ a contradiction. \end{proof}

 \noindent\begin{lemma}\label{le:4.4}
Let $\widetilde{G}$ be a mixed graph  with a pendant vertex $u$, and $v$ be the unique neighbor of $u$ in $G$,  $\widetilde{H}=\widetilde{G}-\{u,v\}$. If $v$ is not on any even cycle of $G$, then
$r(\widetilde{G})=2m(G)-2\kappa(G)+s$ if and only if $r(\widetilde{H})=2m(H)-2\kappa(H)+s$, where $0\leq \kappa(G)\leq c(G)$, $s$ is an integer and $0\leq s\leq 2m^{\ast}(G)-2m(G)+2\kappa(G)$.
\end{lemma}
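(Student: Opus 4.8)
The plan is to reduce the asserted equivalence to three elementary identities between the parameters of $\widetilde{G}$ and of $\widetilde{H}=\widetilde{G}-\{u,v\}$; once these are in hand, the ``if and only if'' becomes a single substitution, because the underlying rank relation is an \emph{unconditional} equality rather than an inequality.

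First I would record the rank identity: since $u$ is a pendant vertex of $\widetilde{G}$ with unique neighbor $v$, Lemma \ref{le:2.2} gives $r(\widetilde{G})=r(\widetilde{H})+2$ (this remains valid when $H$ has no edges, reading $r(\widetilde{H})=0$ via Lemma \ref{le:2.3}). Next, the matching identity: applying Lemma \ref{le:2.7} with $x=u$ and $y=v$ yields $m(H)=m(G-u-v)=m(G)-1$.

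The only step requiring an actual argument is the identity $\kappa(H)=\kappa(G)$. The vertex $u$ has degree $1$, hence lies on no cycle of $G$; by hypothesis, $v$ lies on no even cycle of $G$. Therefore every even cycle of $G$ avoids both $u$ and $v$, so it is an even cycle of $H=G-u-v$; conversely, $H$ is an induced subgraph of $G$, so every even cycle of $H$ is an even cycle of $G$. Thus $\widetilde{G}$ and $\widetilde{H}$ have precisely the same even cycles, whence $\kappa(H)=\kappa(G)$. This is exactly where the two hypotheses are used: if $v$ lay on an even cycle, that cycle would be destroyed in $H$ and $\kappa$ could drop.

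Finally I would combine the three identities. If $r(\widetilde{G})=2m(G)-2\kappa(G)+s$, then substituting $r(\widetilde{G})=r(\widetilde{H})+2$, $m(G)=m(H)+1$ and $\kappa(G)=\kappa(H)$ gives $r(\widetilde{H})=2m(G)-2\kappa(G)+s-2=2m(H)-2\kappa(H)+s$; the converse follows by running the same substitution backwards. The side conditions $0\leq\kappa(G)\leq c(G)$ and $0\leq s\leq 2m^{\ast}(G)-2m(G)+2\kappa(G)$ play no role in this derivation; the bound on $s$ is in fact automatically forced by Inequality (2) applied to $\widetilde{G}$, which gives $2m(G)-2\kappa(G)\leq r(\widetilde{G})\leq 2m^{\ast}(G)$. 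I do not anticipate any real obstacle here; the only point to watch is the verification of $\kappa(H)=\kappa(G)$, which genuinely needs both that $u$ is pendant and that $v$ misses every even cycle of $G$.
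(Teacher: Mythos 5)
Your proposal is correct and follows essentially the same route as the paper: both use Lemma \ref{le:2.2} for $r(\widetilde{G})=r(\widetilde{H})+2$, Lemma \ref{le:2.7} for $m(H)=m(G)-1$, and the observation that $\kappa(H)=\kappa(G)$ (which the paper states without elaboration and you justify explicitly), followed by direct substitution in both directions.
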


\begin{proof}  Since $v$ is not on any even cycle of $G$, $\kappa(G)=\kappa(H)$.

\textbf{Necessity:} By Lemmas \ref{le:2.2} and \ref{le:2.7},
$$\begin{aligned}
   r(\widetilde{H})&=r(\widetilde{G})-2 \\
             &=2m(G)-2\kappa(G)+s-2 \\
             &=2m(H)-2\kappa(H)+s.
 \end{aligned}$$

\textbf{Sufficiency:} By Lemmas \ref{le:2.2} and \ref{le:2.7},
$$\begin{aligned}
   r(\widetilde{G})&=r(\widetilde{H})+2 \\
             &=2m(H)-2\kappa(H)+s+2 \\
             &=2m(G)-2\kappa(G)+s.
 \end{aligned}$$\end{proof}

 By Lemma \ref{le:2.3} and Inequality (\ref{eq:2}), we can obtain the following lemma.
 \noindent\begin{lemma}\label{le:4.5}
Let $\widetilde{G}$ be a  mixed graph. Then
\begin{enumerate}[(a)]
\item $r(\widetilde{G})=2m(G)-2\kappa(G)$ if and only if for each connected component $\widetilde{G}_{i}$ of $\widetilde{G}$, $r(\widetilde{G}_{i})=2m(G_{i})-2\kappa(G_{i})$.
\item $r(\widetilde{G})=2m(G)-2\kappa(G)+1$ if and only if $\widetilde{G}$ has exactly a connected component $\widetilde{G}_{0}$ satisfying that $r(\widetilde{G}_{0})=2m(G_{0})-2\kappa(G_{0})+1$ and for each  other connected component $\widetilde{G}_{i}$, $r(\widetilde{G}_{i})=2m(G_{i})-2\kappa(G_{i})$.
\end{enumerate}
\end{lemma}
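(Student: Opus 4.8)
The plan is to prove Lemma \ref{le:4.5} directly from the additivity of the $H$-rank over connected components (Lemma \ref{le:2.3}(a)) together with the fact that the matching number and the number of even cycles are also additive over components, and the crucial fact that Inequality (\ref{eq:2}) gives $r(\widetilde{G}_i)\geq 2m(G_i)-2\kappa(G_i)$ for \emph{every} component. Write $\widetilde{G}=\widetilde{G}_{1}\cup\cdots\cup\widetilde{G}_{t}$ for the decomposition into connected components. Then $r(\widetilde{G})=\sum_{i=1}^{t}r(\widetilde{G}_{i})$, $m(G)=\sum_{i=1}^{t}m(G_{i})$, and $\kappa(G)=\sum_{i=1}^{t}\kappa(G_{i})$, so
$$
r(\widetilde{G})-\bigl(2m(G)-2\kappa(G)\bigr)=\sum_{i=1}^{t}\Bigl(r(\widetilde{G}_{i})-\bigl(2m(G_{i})-2\kappa(G_{i})\bigr)\Bigr),
$$
and by Inequality (\ref{eq:2}) each summand $d_{i}:=r(\widetilde{G}_{i})-2m(G_{i})+2\kappa(G_{i})$ is a nonnegative integer.

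For part (a): if $r(\widetilde{G})=2m(G)-2\kappa(G)$ then $\sum_i d_i=0$ with all $d_i\geq 0$, forcing $d_i=0$ for every $i$, i.e. $r(\widetilde{G}_i)=2m(G_i)-2\kappa(G_i)$ for all $i$; conversely if every $d_i=0$ then the sum is $0$, giving $r(\widetilde{G})=2m(G)-2\kappa(G)$. For part (b): if $r(\widetilde{G})=2m(G)-2\kappa(G)+1$ then $\sum_i d_i=1$ with nonnegative integer summands, which forces exactly one index $i_{0}$ with $d_{i_{0}}=1$ and $d_i=0$ for all $i\neq i_{0}$; this is precisely the stated condition. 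Conversely, if exactly one component $\widetilde{G}_{0}$ has $d=1$ and all others have $d=0$, summing gives $r(\widetilde{G})=2m(G)-2\kappa(G)+1$.

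There is essentially no obstacle here — the lemma is a bookkeeping consequence of additivity plus the one-sided bound already proved in Theorem \ref{th:1.2}. The only point worth stating carefully is that all quantities involved ($r$, $2m$, $2\kappa$) are integers, so the differences $d_i$ are genuinely nonnegative \emph{integers}, which is what lets us deduce that a sum of them equalling $0$ or $1$ pins down each term. I would therefore keep the proof to two or three lines invoking Lemma \ref{le:2.3}(a), the additivity of $m(\cdot)$ and $\kappa(\cdot)$, and the lower bound of Inequality (\ref{eq:2}), then read off both equivalences from the equation displayed above.
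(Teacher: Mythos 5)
Your proof is correct and is essentially the paper's argument: the paper simply asserts the lemma follows from Lemma \ref{le:2.3} (additivity of the $H$-rank over components) together with the lower bound in Inequality (\ref{eq:2}), and your write-up just fills in the same bookkeeping, using that each component's deficit $r(\widetilde{G}_i)-2m(G_i)+2\kappa(G_i)$ is a nonnegative integer summing to $0$ or $1$. No gaps; nothing further needed.
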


 \noindent\begin{lemma}\label{le:4.6}
Let $\widetilde{G}$ be a  mixed graph  containing no pendant vertices and isolated vertices and $\kappa(G)=c(G)-1~(c(G)\geq 2)$.  If $r(\widetilde{G})=2m(G)-2\kappa(G)+s$, where $s\in \{0,1\}$, then  either $G$ has a perfect matching or $G$ has a connected component which is an odd cycle.
\end{lemma}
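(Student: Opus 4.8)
The plan is to work along the connected components of $\widetilde{G}$ and reduce the statement to an inductive claim about one component. By Lemma \ref{le:4.5}, the hypothesis $r(\widetilde{G})=2m(G)-2\kappa(G)+s$ with $s\in\{0,1\}$ forces every component $\widetilde{G}_{i}$ of $\widetilde{G}$ to satisfy $r(\widetilde{G}_{i})=2m(G_{i})-2\kappa(G_{i})+s_{i}$ with $s_{i}\in\{0,1\}$ and $\sum_{i}s_{i}=s$. Since $c(G)\ge 2$ and $\kappa(G)=c(G)-1$, we have $\kappa(G)\ge 1$, so $G$ has an even cycle; applying Lemmas \ref{le:4.1} and \ref{le:4.2} to vertices on even cycles and then Lemma \ref{le:2.10}, all even cycles of $G$ — hence of each $G_{i}$ — are pairwise vertex-disjoint, so $c(G_{i})\ge\kappa(G_{i})$. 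As each $G_{i}$ has no pendant or isolated vertex, $c(G_{i})\ge 1$; using $\sum_{i}\bigl(c(G_{i})-\kappa(G_{i})\bigr)=c(G)-\kappa(G)=1$ with each summand a nonnegative integer shows that exactly one component, say $G_{1}$, has $c(G_{1})=\kappa(G_{1})+1$ while every other $G_{i}$ has $c(G_{i})=\kappa(G_{i})$. It therefore suffices to prove that (i) every $G_{i}$ with $c(G_{i})=\kappa(G_{i})$ is a single even cycle, and (ii) $G_{1}$ is an odd cycle or has a perfect matching; then each $G_{i}$ with $i\neq 1$ has a perfect matching, and either $G_{1}$ supplies the required odd-cycle component, or $G_{1}$ together with the remaining components supplies a perfect matching of $G$.

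For (i): when $c(G_{i})=\kappa(G_{i})$, no cycle of $G_{i}$ can be odd — an odd cycle cannot be a sum of pairwise vertex-disjoint even cycles in the cycle space — so $G_{i}$ is bipartite with pairwise vertex-disjoint cycles. Since $2m(G_{i})-2\kappa(G_{i})=2m(G_{i})-2c(G_{i})$ and $r(\widetilde{G}_{i})\neq 2m(G_{i})-2c(G_{i})+1$ by Lemma \ref{le:2.12}, we get $s_{i}=0$, hence $r(\widetilde{G}_{i})=2m(G_{i})-2c(G_{i})$, and Theorem \ref{th:1.1} yields $m(T_{G_{i}})=m([T_{G_{i}}])$. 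Because the cycles of $G_{i}$ are pairwise vertex-disjoint and $G_{i}$ has no pendant vertex, a non-cyclic vertex $v$ of $G_{i}$ is adjacent to at most one vertex of each cycle, so contracting the cycles leaves $\deg_{T_{G_{i}}}(v)=\deg_{G_{i}}(v)\ge 2$; thus every leaf of the tree $T_{G_{i}}$ is a cyclic vertex. If $T_{G_{i}}$ had an edge, then Lemma \ref{le:2.11} applied to $T_{G_{i}}$ with $W$ the set of cyclic vertices — note $m(T_{G_{i}})=m(T_{G_{i}}-W)$ is the same as $r(T_{G_{i}})=r(T_{G_{i}}-W)$ — would produce a non-cyclic leaf, a contradiction. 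Hence $T_{G_{i}}$ is a single vertex, so $G_{i}$ is a single cycle, necessarily even, and so has a perfect matching.

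For (ii): if $c(G_{1})=1$ then $\kappa(G_{1})=0$, and a connected unicyclic graph without pendant vertices is a cycle, so $G_{1}$ is an odd cycle and we are done. If $c(G_{1})\ge 2$, I claim $G_{1}$ has a perfect matching; this is proved by induction on $c(G_{1})$, the base case being a bicyclic graph $\infty(p,1,q)$, $\infty(p,l,q)$ or $\theta(p,l,q)$ with exactly one even cycle, which is disposed of directly using Lemma \ref{le:2.5} for ranks of mixed cycles together with Lemma \ref{le:2.12} (one checks that each such graph without a perfect matching cannot satisfy $r(\widetilde{G_{1}})=2m(G_{1})-2\kappa(G_{1})+s$). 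For the inductive step, pick a vertex $u$ on an even cycle of $G_{1}$ of degree $2$ if one exists (so that a cycle survives the deletion). By Lemmas \ref{le:4.1} and \ref{le:4.2}, $\kappa(G_{1}-u)=\kappa(G_{1})-1$, $m(G_{1}-u)=m(G_{1})-1$, and $r(\widetilde{G_{1}-u})=2m(G_{1}-u)-2\kappa(G_{1}-u)+s'$ with $s'\in\{0,1\}$, while $\kappa(G_{1}-u)\le c(G_{1}-u)\le c(G_{1})-1=\kappa(G_{1}-u)+1$. Repeatedly deleting pendant $K_{2}$'s from $\widetilde{G_{1}-u}$ (the quasi-pendant never lies on an even cycle, by Lemma \ref{le:4.3}, and each deletion preserves the rank relation and the quantities $\kappa$, $m$, $c-\kappa$ by Lemma \ref{le:4.4}) and then discarding isolated vertices (Lemma \ref{le:2.3}) produces a mixed graph $\widetilde{G'}$ with no pendant or isolated vertex, $c(G')-\kappa(G')\in\{0,1\}$, and $r(\widetilde{G'})=2m(G')-2\kappa(G')+s'$. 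Applying the induction hypothesis to $\widetilde{G'}$ (and part (i) when $c(G')=\kappa(G')$), $G'$ has a perfect matching or an odd-cycle component; reversing the reductions and comparing vertex counts with matching numbers (each pendant $K_{2}$ deletion and the deletion of $u$ lowers $m$ by exactly one) then forces $G_{1}$ to have a perfect matching.

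The main obstacle is the inductive step of (ii). One must select the deleted even-cycle vertex so that a cycle survives and $\widetilde{G'}$ really satisfies the induction hypotheses, handle separately the case in which every vertex of every even cycle has degree at least $3$, keep precise track of how $\kappa$, $c$ and $m$ change under the deletion of $u$ and under the pendant-$K_{2}$ and isolated-vertex reductions (the even cycle through $u$ may remain intertwined with an odd cycle), and then correctly lift a perfect matching — or an odd-cycle component — of $\widetilde{G'}$ back to $G_{1}$, ruling out in particular the possibility that the reduction creates an odd-cycle component which fails to give back either a perfect matching or an odd-cycle component of $G_{1}$. The bicyclic base cases, though elementary, also require a somewhat tedious case analysis.
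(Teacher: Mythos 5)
Your outline is not a complete proof: the heart of the lemma in your decomposition is claim (ii) for the component $G_{1}$ with $c(G_{1})\geq 2$, and you yourself flag its inductive step as unresolved. The gaps there are real, not cosmetic. After deleting a vertex $u$ of an even cycle and performing pendant $K_{2}$ deletions, the crucial subgraph $\widetilde{G'}$ may well have an odd-cycle component (already for $G_{1}=\infty(4,1,3)$ this happens), in which case the induction hypothesis gives you no perfect matching of $G'$, and the ``reverse the reductions and compare vertex counts'' step that is supposed to force a perfect matching of $G_{1}$ is exactly what is missing; the bookkeeping with isolated vertices produced along the way, the case in which every vertex of every even cycle has degree at least $3$, and the verification that $\widetilde{G'}$ really satisfies all the hypotheses you need are likewise left open. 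So as written the argument does not close. (Your claim (i), that each component with $c=\kappa$ is a single even cycle, is essentially sound, but it is far more machinery than the statement requires.)

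The paper's proof avoids all of this with a short matching-theoretic argument you could adopt. From Lemmas \ref{le:4.1} and \ref{le:4.2}, $m(G-u)=m(G)-1$ for every vertex $u$ on an even cycle, i.e.\ \emph{every} maximum matching saturates every vertex lying on an even cycle. In a connected graph (with $\kappa(G)=c(G)-1\geq 1$, so an even cycle exists), if a maximum matching $M$ missed some vertex $v$, one takes an $M$-alternating path $P$ from $v$ to a vertex $w$ on an even cycle (its existence uses connectivity and Berge's Theorem, Lemma \ref{le:2.16}) and exchanges matching and non-matching edges along $P$ to produce a maximum matching missing $w$, contradicting the saturation property; hence $G$ has a perfect matching. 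The disconnected case is then handled componentwise via Lemma \ref{le:4.5}: if no component is an odd cycle, each component is either an even cycle (which has a perfect matching) or has $c\geq 2$ and inherits the rank hypothesis, so the connected argument applies. I recommend replacing your structural induction by this direct saturation-plus-alternating-path argument; if you prefer to keep your route, you must supply the lifting argument for perfect matchings through the vertex deletion and the pendant $K_{2}$ reductions, including the case where the reduced graph has an odd-cycle component.
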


\begin{proof} Assume that $G$ is connected. Let $M$ be a maximum matching of $G$. Since $r(\widetilde{G})=2m(G)-2\kappa(G)+s$, $s\in \{0,1\}$, by Lemmas \ref{le:4.1} and \ref{le:4.2},   any vertex $u$ of any even cycle of $G$ is saturated by $M$ as $m(G-u)=m(G)-1$. Suppose that $M$  is not a perfect matching. Then there is an $M$-unsaturated vertex $v$ which is not on any even cycle of $G$. Let $w$ be a vertex on some even cycle of $G$. Then there is an $M$-alternating path between $v$ and $w$, say $P$. Otherwise, there is an $M$-augmenting path between $v$ and $w$, by Lemma \ref{le:2.16}, we can obtain a contradiction. Let $X$ be the set of edges of $P$ which are not in $M$ and $Y$ be the set of edges of $P$ which are in $M$. Let $M'=M\setminus Y \cup X$. Clearly, $|M'|=|M|$ and $w$ is an $M'$-unsaturated vertex, a contradiction to $m(G-w)=m(G)-1$. So $G$ has a perfect matching.

Now assume that $G$ is disconnected.
If $G$ has a connected component which is an odd cycle, then we are done. If not, each connected component $G_{j}$ of $G$ is an even cycle or $c(G_{j})\geq 2$.
Since $r(\widetilde{G})=2m(G)-2\kappa(G)+s$, $s\in\{0,1\}$, by Lemma \ref{le:4.5}, $\widetilde{G}$ has exactly a connected component $\widetilde{G}_{0}$ satisfying that $r(\widetilde{G}_{0})=2m(G_{0})-2\kappa(G_{0})+s$ and for each other  connected component $\widetilde{G}_{i}$, $r(\widetilde{G}_{i})=2m(G_{i})-2\kappa(G_{i})$.
For each connected component of $G$ which is an even cycle, it has a perfect matching. For each connected component $G_{t}$ of $G$ with $c(G_{t})\geq 2$,  similar as the above proof, $G_{t}$ has a perfect matching. Hence $G$ has a perfect matching.
\end{proof}

For mixed bicyclic graphs, we have the following two lemmas.

\noindent\begin{lemma}\label{le:4.7}
  Let $\widetilde{G}$ be a mixed bicyclic graph $\widetilde{\infty}(p,1,q)$, where $p$ is even and $q$ is odd. Then
   \begin{enumerate}[(a)]
  \item $r(\widetilde{G})=2m(G)-2\kappa(G)$ if and only if odd cycle of $\widetilde{G}$ has odd signature and even cycle $\widetilde{C}_{p}$ of it satisfies $\sigma(\widetilde{C}_{p})\equiv p~(\mathrm{mod}~4)$.
  \item $r(\widetilde{G})=2m(G)-2\kappa(G)+1$ if and only if odd cycle of $\widetilde{G}$ has even signature and even cycle $\widetilde{C}_{p}$ of it satisfies $\sigma(\widetilde{C}_{p})\equiv p~(\mathrm{mod}~4)$.
  \end{enumerate}
  \end{lemma}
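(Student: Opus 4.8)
The plan is to pin down the small amount of combinatorial data attached to $G=\infty(p,1,q)$ and then read off $r(\widetilde{G})$ from Lemma~\ref{le:2.8}. Since $\widetilde{G}$ is bicyclic and its two cycles $C_{p}$ and $C_{q}$ meet only in the single degree-$4$ vertex $x$ and share no edge, every cycle of $G$ lies entirely inside $C_{p}$ or entirely inside $C_{q}$; hence the only cycles of $G$ are $C_{p}$ (even) and $C_{q}$ (odd), so $c(G)=2$ and $\kappa(G)=1$. On $p+q-1$ vertices, taking a perfect matching of $C_{p}$ (which saturates $x$) together with a perfect matching of the even path $C_{q}-x=P_{q-1}$ exhibits a perfect matching of $G$, so $m(G)=\frac{p+q-1}{2}$ and therefore $2m(G)-2\kappa(G)=p+q-3$. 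Thus part~(a) asks us to show $r(\widetilde{G})=p+q-3$ exactly in the claimed case, and part~(b) asks us to show $r(\widetilde{G})=p+q-2$ exactly in the claimed case.

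For the rank I would regard $\widetilde{G}$ as obtained by identifying the vertex $x$ of the \emph{even} mixed cycle $\widetilde{C}_{p}$ with the vertex $x$ of $\widetilde{H}:=\widetilde{C}_{q}$, and apply Lemma~\ref{le:2.8} with $\widetilde{C}_{n}=\widetilde{C}_{p}$ (so $n=p$ is even) and $\widetilde{F}=\widetilde{C}_{q}-x=\widetilde{P}_{q-1}$. By Lemma~\ref{le:2.4}, $r(\widetilde{P}_{q-1})=2m(P_{q-1})=q-1$; by Lemma~\ref{le:2.5}, $r(\widetilde{C}_{q})=q-1$ if $\sigma(\widetilde{C}_{q})$ is odd and $r(\widetilde{C}_{q})=q$ if $\sigma(\widetilde{C}_{q})$ is even. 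If $\sigma(\widetilde{C}_{p})\equiv p~(\mathrm{mod}~4)$, the second alternative of Lemma~\ref{le:2.8} gives $r(\widetilde{G})=r(\widetilde{C}_{q})+p-2$, which equals $p+q-3$ when $\sigma(\widetilde{C}_{q})$ is odd and $p+q-2$ when $\sigma(\widetilde{C}_{q})$ is even. If instead $\sigma(\widetilde{C}_{p})\not\equiv p~(\mathrm{mod}~4)$ — that is, $\sigma(\widetilde{C}_{p})$ is odd or $\sigma(\widetilde{C}_{p})+p\equiv 2~(\mathrm{mod}~4)$ — the third alternative of Lemma~\ref{le:2.8} gives $r(\widetilde{G})=r(\widetilde{P}_{q-1})+p=p+q-1$.

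These three mutually exclusive cases exhaust all possibilities, so each of the values $p+q-3$, $p+q-2$, $p+q-1$ occurs precisely under the stated signature condition: $r(\widetilde{G})=2m(G)-2\kappa(G)=p+q-3$ iff $\sigma(\widetilde{C}_{p})\equiv p~(\mathrm{mod}~4)$ and $\sigma(\widetilde{C}_{q})$ is odd, which is~(a); and $r(\widetilde{G})=2m(G)-2\kappa(G)+1=p+q-2$ iff $\sigma(\widetilde{C}_{p})\equiv p~(\mathrm{mod}~4)$ and $\sigma(\widetilde{C}_{q})$ is even, which is~(b). There is no serious obstacle beyond the bookkeeping: the crucial choice is to attach the \emph{even} cycle to $\widetilde{C}_{q}$ rather than the odd cycle to $\widetilde{C}_{p}$, since Lemma~\ref{le:2.8} resolves all signature sub-cases cleanly only when the attached cycle has even length, and one must remember that $\sigma(\widetilde{C}_{p})\equiv p~(\mathrm{mod}~4)$ is exactly the condition in Lemma~\ref{le:2.5} forcing $r(\widetilde{C}_{p})=p-2$.
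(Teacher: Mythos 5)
Your proposal is correct and follows essentially the same route as the paper: both decompose $\widetilde{\infty}(p,1,q)$ by attaching the even cycle $\widetilde{C}_{p}$ at the common vertex and apply Lemma~\ref{le:2.8} together with Lemmas~\ref{le:2.4} and~\ref{le:2.5}, with $m(G)=\frac{p+q-1}{2}$ and $\kappa(G)=1$. The only difference is organizational: you run one exhaustive three-case computation yielding both (a) and (b) at once, whereas the paper argues sufficiency and necessity separately and treats (b) ``similarly,'' but the key identities $r(\widetilde{G})=r(\widetilde{C}_{q})+p-2$ and $r(\widetilde{G})=r(\widetilde{P}_{q-1})+p$ are the same.
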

\begin{proof}It is obvious that  $m(G)=(p+q-1)/2$ and $\kappa(G)=1$. We first prove (a) of this lemma.

\textbf{Sufficiency:} Since $\widetilde{C}_{q}$ has odd signature and $\widetilde{C}_{p}$  satisfies $\sigma(\widetilde{C}_{p})\equiv p~(\mathrm{mod}~4)$, by Lemmas \ref{le:2.5} and \ref{le:2.8}, $r(\widetilde{G})=p-2+r(\widetilde{C}_{q})=p+q-3=2m(G)-2\kappa(G)$.

\textbf{Necessity:} Assume that  $\widetilde{C}_{p}$ dose not satisfy $\sigma(\widetilde{C}_{p})\equiv p~(\mathrm{mod}~4)$. By Lemmas \ref{le:2.4} and \ref{le:2.8}, $r(\widetilde{G})=p+r(\widetilde{P}_{q-1})=p+q-1\neq 2m(G)-2\kappa(G)$, a contradiction. Hence $\sigma(\widetilde{C}_{p})\equiv p~(\mathrm{mod}~4)$. By Lemma \ref{le:2.8}, $r(\widetilde{G})=p-2+r(\widetilde{C}_{q})$. Since $r(\widetilde{G})=2m(G)-2\kappa(G)=p+q-3$, $r(\widetilde{C}_{q})=q-1$. By Lemma \ref{le:2.5}, $\widetilde{C}_q$ has odd signature.

Similarly, we can obtain (b) of this lemma.
\end{proof}

\noindent\begin{lemma}\label{le:4.8}
  Let $\widetilde{G}$ be a mixed bicyclic graph $\widetilde{\theta}(p,l,q)$, where $p$ is even,  $l$ and $q$ are odd. Then
  \begin{enumerate}[(a)]
  \item $r(\widetilde{G})=2m(G)-2\kappa(G)$ if and only if two  odd cycles of $\widetilde{G}$ have odd signatures and even cycle $\widetilde{C}_{q+l-2}$ satisfies $q+l+\sigma(\widetilde{C}_{q+l-2})\equiv 2~(\mathrm{mod}~ 4)$.
  \item $r(\widetilde{G})=2m(G)-2\kappa(G)+1$ if and only if two odd cycles of $\widetilde{G}$ have even signatures and even cycle $\widetilde{C}_{q+l-2}$ satisfies $q+l+\sigma(\widetilde{C}_{q+l-2})\equiv 2~(\mathrm{mod}~ 4)$.
  \end{enumerate}
\end{lemma}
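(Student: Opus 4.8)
The plan is to analyze the mixed theta graph $\widetilde{\theta}(p,l,q)$ directly via the structural lemmas on vertex-identification of cycles, exactly as in the proof of Lemma \ref{le:4.7}. First I record the easy combinatorial data: since $\widetilde{\theta}(p,l,q)$ is obtained from three internally disjoint paths joining two vertices of degree $3$, with $p$ even and $l,q$ odd, one checks directly that $G$ contains exactly three cycles — two odd cycles (of lengths $p+l-1$ and $p+q-1$) and one even cycle $C_{q+l-2}$ — so $\kappa(G)=1$, and a short matching count gives $m(G)=(p+q+l-3)/2$; hence the target value is $2m(G)-2\kappa(G)=p+q+l-5$ in part (a) (one more in part (b)). The graph has no pendant vertices, so Lemma \ref{le:2.2} does not apply directly; instead I will delete one vertex on one of the three paths to reduce $\widetilde{G}$ to a graph that is a mixed cycle with a pendant path attached, or to a disjoint union, and then invoke Lemma \ref{le:2.8} (cycle identified with a mixed graph at a vertex) together with Lemma \ref{le:2.5} (rank of a mixed cycle) and Lemma \ref{le:2.4} (rank of a mixed tree).

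Concretely, for the \textbf{sufficiency} direction I would pick a suitable vertex $v$ of degree $2$ on the path of length $l-1$ (or treat the degenerate cases $l=2$, $p=2$ separately). Removing it, and then removing resulting pendant vertices two at a time via Lemma \ref{le:2.2}, should leave precisely one of the two odd cycles with possibly a short pendant path; applying Lemma \ref{le:2.8} to that cycle, using that its signature is odd so $r$ drops to (length $-1$), gives the exact value $p+q+l-5$. The alternative reduction is to view $\widetilde{\theta}(p,l,q)$ as $\widetilde{C}_{q+l-2}$ (the even cycle) with a pendant path of length $p-1$ joining its two degree-$3$ vertices; but since that path connects two vertices of the cycle rather than being attached at one vertex, I will instead split along the even cycle: delete a degree-$2$ vertex of $C_p$ to obtain $\widetilde{C}_{q+l-2}$ with a pendant path, then use Lemma \ref{le:2.8} with the even-cycle clause (the hypothesis $q+l+\sigma(\widetilde{C}_{q+l-2})\equiv 2\pmod 4$ is exactly the case "$\sigma+n\equiv 2\pmod 4$" where the rank formula is $r(\widetilde F)+n$, forcing the large value), which should be shown to be incompatible with the small rank unless the odd-cycle signatures are also as claimed. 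I will cross-check the two computations for consistency.

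For the \textbf{necessity} direction I argue contrapositively: if one of the odd cycles has even signature, then by Lemma \ref{le:2.5} its rank is its full length, and pushing this through Lemma \ref{le:2.8} raises $r(\widetilde G)$ above $p+q+l-5$; similarly, if the even cycle $\widetilde{C}_{q+l-2}$ satisfies $q+l+\sigma\equiv 0\pmod 4$ instead, Lemma \ref{le:2.5} gives its rank as (length $-2$) and Lemma \ref{le:2.8}'s second clause applies, again changing $r(\widetilde G)$ from the required value. Going through the (finitely many) sign-parity cases and comparing against $2m(G)-2\kappa(G)$ and $2m(G)-2\kappa(G)+1$ pins down exactly the stated conditions, and also shows part (b) follows by the identical case analysis shifted by one. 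Part (b) is then routine given part (a), since the only remaining signature combination for the odd cycles that is not covered by (a) is "both even", and one verifies it yields precisely $r(\widetilde G)=2m(G)-2\kappa(G)+1$.

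The main obstacle I anticipate is bookkeeping in the degenerate and boundary configurations of $\widetilde{\theta}(p,l,q)$ — namely $l=2$ (so one of the "odd cycles" is actually a cycle of length $p+1$ and the path contributes no internal vertex), $p=2$, and the interplay of which vertex of degree $2$ is available for deletion — together with the need to keep the three structural lemmas (\ref{le:2.5}, \ref{le:2.8}, \ref{le:2.2}) applied to the \emph{correct} cycle at each reduction step so that the signature hypotheses are used in the right clause of Lemma \ref{le:2.8}. Once the reduction is set up cleanly, each case is a one-line rank computation.
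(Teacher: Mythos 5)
There is a genuine gap: your reduction strategy cannot produce the \emph{upper} bounds on $r(\widetilde{G})$ that both parts of the lemma require. Lemma \ref{le:2.8} never applies to $\widetilde{\theta}(p,l,q)$ itself, because its three cycles pairwise share vertices (there is no cycle meeting the rest of the graph in a single vertex); you recognize this and propose to first delete a degree-$2$ vertex and then reduce by Lemmas \ref{le:2.2}, \ref{le:2.5}, \ref{le:2.8}. But vertex deletion only gives the window $r(\widetilde{G}-v)\leq r(\widetilde{G})\leq r(\widetilde{G}-v)+2$ of Lemma \ref{le:2.1}, so computing the rank of the reduced graph exactly still leaves an ambiguity of $2$ in $r(\widetilde{G})$ — exactly the ambiguity that separates the three possible values $2m-2$, $2m-1$, $2m$. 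This is fatal for the sufficiency direction of (a), where one must show $r(\widetilde{G})\leq 2m(G)-2$, and for the upper half of (b). The paper closes this gap by a different tool: it invokes the coefficient formula of Lemma \ref{le:2.9}, observes that when both odd cycles have odd signatures there is no basic subgraph of order $2m-1$ (so $a_{2m-1}=0$), enumerates the exactly three basic subgraphs of order $2m$ (the even cycle $\widetilde{C}_{q+l-2}$ together with a perfect matching of the remaining path, plus the two perfect matchings of $G$ obtained by replacing that cycle by its two perfect matchings), and shows their contributions cancel precisely when $q+l+\sigma(\widetilde{C}_{q+l-2})\equiv 2\pmod 4$, giving $a_{2m}=0$ and hence $r\leq 2m-2$; the vertex-deletion computations you describe are used in the paper only for the lower bounds and the necessity directions. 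Without some such global determinant/coefficient argument (or an equivalent exact rank computation for theta graphs), your case analysis cannot close.

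Secondary but worth fixing: your combinatorial data are off. The three cycles of $\theta(p,l,q)$ have lengths $p+l-2$, $p+q-2$ (the odd ones) and $q+l-2$ (the even one), not $p+l-1$, $p+q-1$; the order is $p+l+q-4$, so $m(G)=(p+l+q-4)/2$ (your $(p+q+l-3)/2$ is not even an integer here), and the target value in (a) is $2m(G)-2\kappa(G)=p+q+l-6$, while $p+q+l-5$ is the value relevant to (b). Also, in the necessity of (b) the remaining signature patterns are ``both even'' and ``one even, one odd''; the mixed case must be excluded separately (the paper does this by showing it forces $a_{2m}\neq 0$, hence $r=2m$), and there is a parity constraint linking the three signatures of a theta graph (two odd-signature odd cycles force the even cycle to have even signature) that your sketch never uses but the coefficient computation needs.
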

\begin{proof} Let $m=m(G)$ and let  $f(\widetilde{G}, \lambda)=\sum_{i=0}^{n}a_{i}\lambda^{n-i}$ be characteristic polynomial of $\widetilde{G}$.
By Lemma \ref{le:2.9}, $$a_{i}=\sum_{\widetilde{B}} (-1)^{\frac{1}{2}\sigma(\widetilde{B})+\omega(B)}2^{\beta(B)},$$
where the sum runs over all basic subgraphs $\widetilde{B}$ of order $i$ in $\widetilde{G}$. Let $u$ be a vertex of even cycle $C_{q+l-2}$~(which is not on the odd cycle $C_{p+l-2}$) adjacent to vertex of degree 3 in $G$.
We first prove (a) of this lemma.

\textbf{Sufficiency:} It is obvious that $m(G)=(p+l+q-4)/2$ and $\kappa(G)=1$.  Since two  odd cycles of $\widetilde{G}$ have odd signatures, $\widetilde{G}$ has no basic subgraph of order $2m-1$. Hence $a_{2m-1}=0$. Note that there are three basic subgraphs of order $2m$ in $\widetilde{G}$. One of them is disjoint union of cycle $\widetilde{C}_{q+l-2}$ and a perfect matching of path $\widetilde{P}_{p-2}$. The other two are obtained by replacing cycle $\widetilde{C}_{q+l-2}$ with two distinct perfect matchings of it. So $a_{2m}= (-1)^{\frac{1}{2}\sigma(\widetilde{C}_{q+l-2})+\frac{p-2}{2}+1}\cdot 2+ (-1)^{\frac{p+l+q-4}{2}}\cdot 2=0$ as $q+l+\sigma(\widetilde{C}_{q+l-2})\equiv 2~(\mathrm{mod}~ 4)$. Hence $r(\widetilde{G})\leq 2m-2=2m(G)-2\kappa(G)$. By Inequality (2), we have that $r(\widetilde{G})=2m(G)-2\kappa(G)$.

\textbf{Necessity:} Without loss of generality, assume that odd cycle $\widetilde{C}_{p+l-2}$ has even signature. By Lemmas \ref{le:2.1}, \ref{le:2.2} and \ref{le:2.5}, $r(\widetilde{G})\geq r(\widetilde{G}-u)=q-3+r(\widetilde{C}_{p+l-2})=p+q+l-5=2m(G)-2\kappa(G)+1$, a contradiction. Hence two  odd cycles of $\widetilde{G}$ have odd signatures. So  the unique even cycle $\widetilde{C}_{q+l-2}$ of $\widetilde{G}$ has even signature. Then  $a_{2m}= (-1)^{\frac{1}{2}\sigma(\widetilde{C}_{q+l-2})+\frac{p-2}{2}+1}\cdot 2+ (-1)^{\frac{p+l+q-4}{2}}\cdot 2$. Since $r(\widetilde{G})=2m-2$, $a_{2m}=0$. Hence $q+l+\sigma(\widetilde{C}_{q+l-2})\equiv 2~(\mathrm{mod}~ 4)$.

Now we prove (b) of this lemma.

\textbf{Sufficiency:}  Since two  odd cycles of $\widetilde{G}$ have even signatures, by Lemmas \ref{le:2.1}, \ref{le:2.2} and \ref{le:2.5}, $r(\widetilde{G})\geq r(\widetilde{G}-u)=2m-1=2m(G)-2\kappa(G)+1$. Similar as the above proof, $a_{2m}=0$. So $r(G)=2m-1=2m(G)-2\kappa(G)+1$.

\textbf{Necessity:} Since $r(\widetilde{G})=2m(G)-2\kappa(G)+1=2m-1$, there is a basic subgraph of order $2m-1$. Hence at least one of two  odd cycles of $\widetilde{G}$ has even signature. Suppose that the other one has odd signature. Then the unique even cycle $\widetilde{C}_{q+l-2}$ has odd signature. So  $a_{2m}=(-1)^{\frac{p+l+q-4}{2}}\cdot 2\neq 0$. Hence $r(\widetilde{G})=2m$, a contradiction. So two odd cycles of $\widetilde{G}$ have even signatures. Then the unique even cycle $\widetilde{C}_{q+l-2}$ has even signature. So $a_{2m}= (-1)^{\frac{1}{2}\sigma(\widetilde{C}_{q+l-2})+\frac{p-2}{2}+1}\cdot 2+ (-1)^{\frac{p+l+q-4}{2}}\cdot 2=0$. Hence $q+l+\sigma(\widetilde{C}_{q+l-2})\equiv 2~(\mathrm{mod}~ 4)$.
\end{proof}

For a connected mixed graph, we have the following three lemmas.

\noindent\begin{lemma}\label{le:4.9}
Let $\widetilde{G}$ be a connected mixed graph  with $\kappa(G)=c(G)-1~(c(G)\geq 2)$ vertex-disjoint even cycles and $\widetilde{G}$ has no pendant vertices and no pendant even cycles.
Let $p$ be even and let $q$ and $l$ be odd.
 \begin{enumerate}[(a)]
  \item If $r(\widetilde{G})=2m(G)-2\kappa(G)$, then $\widetilde{G}$ is either a mixed bicyclic graph $\widetilde{\infty}(p,1,q)$ whose odd cycle  has odd signature and even cycle $\widetilde{C}_{p}$  satisfies $\sigma(\widetilde{C}_{p})\equiv p~(\mathrm{mod}~4)$ or a mixed bicyclic graph $\widetilde{\theta}(p,l,q)$ whose two odd cycles have odd signatures and even cycle $\widetilde{C}_{q+l-2}$ satisfies $q+l+\sigma(\widetilde{C}_{q+l-2})\equiv 2~(\mathrm{mod}~ 4)$.
  \item If $r(\widetilde{G})=2m(G)-2\kappa(G)+1$, then $\widetilde{G}$ is either a mixed bicyclic graph $\widetilde{\infty}(p,1,q)$ whose odd cycle  has even signature and even cycle $\widetilde{C}_{p}$  satisfies $\sigma(\widetilde{C}_{p})\equiv p~(\mathrm{mod}~4)$ or a mixed bicyclic graph $\widetilde{\theta}(p,l,q)$ whose two odd cycles have even signatures and even cycle $\widetilde{C}_{q+l-2}$ satisfies $q+l+\sigma(\widetilde{C}_{q+l-2})\equiv 2~(\mathrm{mod}~ 4)$.
  \end{enumerate}
\end{lemma}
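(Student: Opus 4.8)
I would handle (a) and (b) at once. Write $r(\widetilde{G})=2m(G)-2\kappa(G)+s$ with $s\in\{0,1\}$; the goal is to show that $c(G)=2$ and that $\widetilde{G}$ is $\widetilde{\infty}(p,1,q)$ or $\widetilde{\theta}(p,l,q)$ with the stated conditions, where the odd cycle(s) have odd signature when $s=0$ and even signature when $s=1$, the condition on the even cycle (which is exactly what Lemmas~\ref{le:4.7} and~\ref{le:4.8} produce) being the same in both cases. The opening move is to apply Lemma~\ref{le:4.6}: $\widetilde{G}$ is connected, has no pendant or isolated vertices, satisfies $\kappa(G)=c(G)-1$, and is not an odd cycle because $c(G)\geq 2$; hence $G$ has a perfect matching, so $n=|V(G)|$ is even and $m(G)=n/2$.

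Next I would dispose of the case $c(G)=2$, which is where Lemmas~\ref{le:4.7} and~\ref{le:4.8} carry the weight. A connected graph of minimum degree at least $2$ with cyclomatic number $2$ is either $\infty(p,l,q)$ for some $l\geq 1$ or $\theta(p,l,q)$. If $\widetilde{G}=\widetilde{\infty}(p,l,q)$ with $l\geq 2$, then $C_p$ and $C_q$ are both pendant cycles of $\widetilde{G}$; since $\widetilde{G}$ has no pendant even cycle, both must be odd, forcing $\kappa(G)=0\neq c(G)-1$, a contradiction, so $l=1$. For $\widetilde{\infty}(p,1,q)$, the fact that $n=p+q-1$ is even together with $\kappa(G)=1$ forces exactly one of $p,q$ to be even; after relabelling $p$ is even and $q$ is odd, and Lemma~\ref{le:4.7} gives the assertion. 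For $\widetilde{\theta}(p,l,q)$, $n=p+l+q-4$ even forces an even number of $p,l,q$ to be odd, and they cannot all be even (else all three cycles are even and $\kappa(G)=3$); hence exactly two are odd, say $p$ even and $q,l$ odd, the unique even cycle is $\widetilde{C}_{q+l-2}$, and Lemma~\ref{le:4.8} finishes this case.

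The crux is therefore to prove $c(G)=2$; I would do this by induction on $\kappa(G)$, the base case $\kappa(G)=1$ being exactly the case $c(G)=2$ above. So suppose $\kappa(G)\geq 2$ and, for contradiction, that $r(\widetilde{G})=2m(G)-2\kappa(G)+s$ with $s\in\{0,1\}$. If $\widetilde{G}$ has a pendant cycle $\widetilde{C}_t$ (necessarily odd, since there are no pendant even cycles) at a vertex $x$ of degree $3$, set $\widetilde{H}=\widetilde{G}-(V(C_t)\setminus\{x\})$. When $\sigma(\widetilde{C}_t)$ is odd, Lemmas~\ref{le:2.8} and~\ref{le:2.15} give $r(\widetilde{H})=2m(H)-2\kappa(H)+s$, and $\widetilde{H}$ is connected with $c(H)=c(G)-1=\kappa(H)$, so all cycles of $\widetilde{H}$ are even and pairwise disjoint; for $s=1$ this contradicts Lemma~\ref{le:2.12}, while for $s=0$, passing to the crucial subgraph of $\widetilde{H}$ (by Lemma~\ref{le:2.14} the suppressed quasi-pendant vertices meet no cycle) and invoking Theorem~\ref{th:1.1} with a nested induction identifies that crucial subgraph as a disjoint union of even cycles and isolated vertices, which together with connectedness and the absence of pendant even cycles forces $\widetilde{H}$ to be a single even cycle, whence $\kappa(G)=\kappa(H)=1$, a contradiction (a short separate argument covers $\sigma(\widetilde{C}_t)$ even). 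If instead $\widetilde{G}$ has no pendant cycle, I would delete a vertex $u$ lying on an even cycle: Lemma~\ref{le:4.1} (if $s=0$) or Lemma~\ref{le:4.2} (if $s=1$) together with Lemma~\ref{le:2.10} give $\kappa(G-u)=\kappa(G)-1$, $m(G-u)=m(G)-1$ and $r(\widetilde{G}-u)=2m(G-u)-2\kappa(G-u)+a_u$ with $a_u\in\{0,1\}$; one then reduces $\widetilde{G}-u$ to a crucial subgraph by pendant $K_2$ deletions (Lemmas~\ref{le:2.2},~\ref{le:2.7},~\ref{le:4.3},~\ref{le:4.4}), decomposes into components (Lemma~\ref{le:4.5}), applies the induction hypothesis to the components of cyclomatic number at least $2$ and Lemma~\ref{le:2.5} to the single-cycle ones, and finally uses the arithmetic of cyclomatic numbers and numbers of even cycles, against the constraint $c(G)=\kappa(G)+1$, to see that no admissible way of reinstating $u$ and the deleted pendant $K_2$'s can rebuild a graph meeting all the hypotheses. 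Hence $\kappa(G)\geq 2$ cannot occur, so $c(G)=2$, and the proof finishes via the second paragraph.

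I expect essentially all the difficulty to sit in this last step: reconstructing the global structure of $\widetilde{G}$ (and of $\widetilde{G}-u$) from the structure of its components once the reducible pieces are stripped off, and verifying that reassembly is incompatible with the simultaneous demands $c(G)=\kappa(G)+1$, pairwise vertex-disjoint even cycles, and no pendant even cycle. The indispensable external inputs are Lemma~\ref{le:2.12} (no $H$-rank equal to $2m-2c+1$) for the $s=1$ branches and Theorem~\ref{th:1.1} with Lemma~\ref{le:2.14} for the $s=0$ branches; everything earlier is bookkeeping with the cycle-identification formula of Lemma~\ref{le:2.8} and the matching identities of the preliminaries.
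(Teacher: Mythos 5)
Your outline gets the easy parts right (Lemma~\ref{le:4.6} giving a perfect matching, the classification of bicyclic graphs of minimum degree two as $\widetilde{\infty}(p,l,q)$ or $\widetilde{\theta}(p,l,q)$, the parity bookkeeping, and the appeal to Lemmas~\ref{le:4.7} and~\ref{le:4.8}), but the decisive step --- showing that $c(G)=2$ --- is not actually proved. In your inductive step for $\kappa(G)\geq 2$ with no pendant cycle you only assert that ``no admissible way of reinstating $u$ and the deleted pendant $K_2$'s can rebuild a graph meeting all the hypotheses,'' and you yourself locate ``essentially all the difficulty'' there; that assertion is the content of the lemma, so the proof has a hole at its core. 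Worse, the induction hypothesis cannot be invoked the way you propose: the components of the crucial subgraph of $\widetilde{G}-u$ need not satisfy the hypotheses you are inducting on (a component may have all its cycles even, so $\kappa=c$ rather than $c-1$, or may contain pendant even cycles), so the statement of the lemma simply does not apply to them. The paper needs no induction on $\kappa$ and no case split on pendant cycles: using the perfect matching from Lemma~\ref{le:4.6} and the presence of an odd cycle, it finds an even cycle whose two attachment paths between its branch vertices both have odd length, deletes a neighbour $y$ of the branch vertex on that cycle, passes to a crucial subgraph $\widetilde{G}_1$ via Lemmas~\ref{le:4.1}--\ref{le:4.4}, and observes that $\widetilde{G}_1$ has odd order while Lemma~\ref{le:4.6} would force a perfect matching if $c(G_1)\geq 2$; this parity contradiction gives $c(G_1)\leq 1$, hence $\widetilde{G}_1$ is an odd cycle, $\widetilde{G}$ is $\widetilde{\infty}(p,1,q)$ or $\widetilde{\theta}(p,l,q)$, and Lemmas~\ref{le:4.7} and~\ref{le:4.8} finish. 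This single well-chosen deletion plus parity argument is exactly the device your plan lacks.

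Two further points in your pendant-odd-cycle case need repair even if that case is kept. First, the conclusion ``forces $\widetilde{H}$ to be a single even cycle'' cannot be literally correct, since the attachment vertex $x$ is a pendant vertex of $\widetilde{H}$; what you need (and can prove, but must argue) is that $c(H)\geq 2$ would produce a pendant even cycle of $H$ avoiding $x$, hence a pendant even cycle of $G$ --- a purely structural fact obtained by contracting the pairwise disjoint even cycles of $H$ and noting the resulting tree has at least two leaves, at most one of which is $x$; in fact this shows the whole case is incompatible with the hypotheses without any rank computation. Second, your parenthetical ``short separate argument'' for a pendant odd cycle of even signature is not supplied, and Lemma~\ref{le:2.8} as stated gives no identification formula for an odd cycle with even signature, so that branch requires a genuinely different tool rather than a routine variant.
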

\begin{proof}Let $\widetilde{G}$ be a connected mixed graph satisfying conditions of this lemma with rank $r(\widetilde{G})=2m(G)-2\kappa(G)+s$, where $s\in \{0,1\}$. Since $G$ has no pendant vertices and no pendant even cycles and $\kappa(G)=c(G)-1~(c(G)\geq 2)$,  any even cycle $\widetilde{C}^{i}$~($1\leq i\leq \kappa(G)$) in $\widetilde{G}$ must contain exactly two vertices of degree $3$ or one vertex of degree $4$. Denoted such vertices as $u_{i_{1}}$ and $u_{i_{2}}$~(if $u_{i_{1}}= u_{i_{2}}$, then this vertex has degree 4). Let $P_{i_{1}}$ and $P_{i_{2}}$ represent the two paths between $u_{i_{1}}$ and $u_{i_{2}}$ on $\widetilde{C}^{i}$. Since $r(\widetilde{G})=2m(G)-2\kappa(G)+s$, $s\in\{0,1\}$ and $G$ is connected with $c(G)\geq 2$, Lemma \ref{le:4.6} implies that $\widetilde{G}$ has a perfect matching. Consequently, $\widetilde{G}$ must contain at least an even cycle $\widetilde{C}^{i}$ such that both paths $P_{i_{1}}$ and  $P_{i_{2}}$ have odd length (without loss of generality, let this even cycle be $\widetilde{C}^{1}$). If, instead, any even cycle $\widetilde{C}^{i}$ has even paths $P_{i_{1}}$ and $P_{i_{2}}$,  then the existence of  a perfect matching would force  $\widetilde{G}$ to lack any odd cycle, a contradiction to $\kappa(G)=c(G)-1$.

Now, let $y$ be one of vertices adjacent to the vertex with degree 3 or 4 in $V(C^{1})$ (see an example shown in Fig. 2). By Lemmas \ref{le:4.1} and \ref{le:4.2},  $r(\widetilde{G}-y)=2m(G-y)-2\kappa(G-y)+t$, $t\in\{0,1\}$~(when $s=0$, $t=0$ and when $s=1$, $t\in\{0,1\}$). After pendant $K_{2}$ deletion operations for $\widetilde{G}-y$, we can get a crucial  subgraph $\widetilde{G}_{1}$. By Lemmas \ref{le:4.3} and \ref{le:4.4},  $r(\widetilde{G}_{1})=2m(G_{1})-2\kappa(G_{1})+t$, $t\in\{0,1\}$. If
$c(G_{1})\geq 2$, by Lemma \ref{le:4.6},  both of $\widetilde{G}$ and $\widetilde{G}_{1}$ have a perfect matching,
however, the order of $\widetilde{G}_{1}$ is odd, a contradiction. If $c(G_{1})=1$, then $\widetilde{G}_{1}$ is a mixed
odd cycle. So $\widetilde{G}$ is a mixed bicyclic graph $\widetilde{\infty}(p,1,q)$ or $\widetilde{\theta}(p,l,q)$. By Lemmas \ref{le:4.7} and \ref{le:4.8},  assertions (a) and (b) hold.
\end{proof}

\begin{figure}[htbp]
\centering
 \includegraphics[scale=1.5]{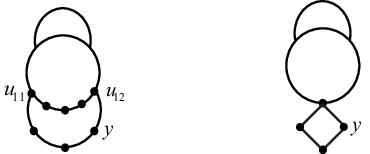}
\caption{$\widetilde{G}$.}
\end{figure}

\noindent\begin{lemma}\label{le:4.10}
Let $\widetilde{G}$ be a connected mixed graph  with pairwise vertex-disjoint  cycles and $\kappa(G)=c(G)-1~(c(G)\geq 2)$. If $\widetilde{G}$ has a pendant odd cycle $\widetilde{C}_{q}$ with odd signature and  $\widetilde{G}$ has no pendant vertices, then $r(\widetilde{G})= 2m(G)-2\kappa(G)$
if and only if $r(\widetilde{H})=2m(H)-2c(H)$, where $\widetilde{H}=\widetilde{G}-V(C_{q})+x$ and $x$ is the unique vertex of degree $3$ in $C_q$.
\end{lemma}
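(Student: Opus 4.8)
\textbf{Proof proposal for Lemma \ref{le:4.10}.}

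The plan is to reduce the rank of $\widetilde G$ to that of $\widetilde H$ by peeling off the pendant odd cycle $\widetilde C_q$ via Lemma \ref{le:2.8}, and to track simultaneously how $m$, $\kappa$ and $c$ change under this operation. Since $\widetilde C_q$ is a pendant odd cycle with odd signature, the first case of Lemma \ref{le:2.8} (applied at the cut vertex $x$ of degree $3$) gives $r(\widetilde G)=r(\widetilde H)+q-1$. On the combinatorial side, Lemma \ref{le:2.15} yields $m(G)=m(H)+\frac{q-1}{2}$. Moreover, since the cycles of $G$ are pairwise vertex-disjoint and $\widetilde C_q$ is odd, deleting $V(C_q)$ (and re-adding $x$) removes exactly one cycle, all even cycles survive, and no new even cycle is created; hence $\kappa(H)=\kappa(G)$, while $c(H)=c(G)-1$. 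Because $\kappa(G)=c(G)-1$, this means $\kappa(H)=\kappa(G)=c(G)-1=c(H)$, i.e. $H$ has all its cycles even and pairwise vertex-disjoint, so $2m(H)-2c(H)=2m(H)-2\kappa(H)$ is the natural target for $\widetilde H$.

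Now the equivalence follows by a direct substitution. If $r(\widetilde H)=2m(H)-2c(H)$, then
\[
r(\widetilde G)=r(\widetilde H)+q-1=2m(H)-2c(H)+q-1
=2\Bigl(m(G)-\tfrac{q-1}{2}\Bigr)-2\bigl(c(G)-1\bigr)+q-1
=2m(G)-2c(G)+1.
\]
Wait --- this shows I must be more careful: with $\kappa(G)=c(G)-1$ we have $2m(G)-2\kappa(G)=2m(G)-2c(G)+2$, and the computation above gives $r(\widetilde G)=2m(G)-2c(G)+1=2m(G)-2\kappa(G)-1$, which is impossible by Inequality (\ref{eq:2}). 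So in fact the correct bookkeeping must be that peeling a pendant \emph{odd} cycle with odd signature contributes $q-1$ to the rank while contributing only $\frac{q-1}{2}$ matched pairs but $c(G)-c(H)=1$ with $\kappa$ unchanged; re-examining, since the claimed equality is $r(\widetilde G)=2m(G)-2\kappa(G)$ and $r(\widetilde H)=2m(H)-2c(H)=2m(H)-2\kappa(H)$, and $\kappa(H)=\kappa(G)$, the identity $r(\widetilde G)=r(\widetilde H)+q-1$ together with $m(G)=m(H)+\frac{q-1}{2}$ gives
\[
r(\widetilde G)=2m(H)-2\kappa(H)+q-1=2m(G)-(q-1)-2\kappa(G)+q-1=2m(G)-2\kappa(G),
\]
as desired. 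Conversely, if $r(\widetilde G)=2m(G)-2\kappa(G)$, the same two identities force $r(\widetilde H)=2m(G)-2\kappa(G)-(q-1)=2m(H)-2\kappa(H)=2m(H)-2c(H)$, establishing necessity.

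The main obstacle is verifying the structural bookkeeping rigorously: namely that the cut vertex $x$ of $C_q$ really is the vertex of $\widetilde H$ at which $\widetilde C_q$ was attached (so Lemma \ref{le:2.8} applies with $\widetilde F = \widetilde H - x$), that no even cycle of $G$ meets $C_q$ (immediate from pairwise vertex-disjointness, but needs to be invoked to get $\kappa(H)=\kappa(G)$), and that $\widetilde C_q$ having odd order with odd signature is exactly the hypothesis needed to land in the first branch of Lemma \ref{le:2.8}. One should also confirm $\widetilde H$ inherits "pairwise vertex-disjoint cycles" so that $2m(H)-2c(H)$ is the relevant lower bound realized by Theorem \ref{th:1.1}; this is automatic since $H$ is an induced-type subgraph (with $x$ restored) of $G$. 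Once these points are checked, the two displayed computations close the proof in both directions.
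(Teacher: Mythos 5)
Your proposal is correct and follows essentially the same route as the paper's proof: peel off the pendant odd cycle with odd signature via Lemma \ref{le:2.8} to get $r(\widetilde{G})=r(\widetilde{H})+q-1$, use Lemma \ref{le:2.15} for $m(G)=m(H)+\frac{q-1}{2}$, observe $\kappa(G)=\kappa(H)=c(H)$, and substitute. Note that your ``wait'' detour was only an arithmetic slip, not a bookkeeping problem: computed correctly, $2m(H)-2c(H)+q-1=\bigl(2m(G)-(q-1)\bigr)-\bigl(2c(G)-2\bigr)+(q-1)=2m(G)-2c(G)+2=2m(G)-2\kappa(G)$, which already gives the desired conclusion on the first pass.
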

\begin{proof} Since $\widetilde{C}_{q}$ is a pendant odd cycle  with odd signature, by Lemma \ref{le:2.8}, $r(\widetilde{G})= q-1+r(\widetilde{H})$. By Lemma \ref{le:2.15}, $m(G)=m(H)+(q-1)/2$. Note that $\kappa(G)=\kappa(H)=c(H)$.
So $2m(G)-2\kappa(G)=2m(H)+q-1-2c(H)$. Combining this with $r(\widetilde{G})= q-1+r(\widetilde{H})$, we have $r(\widetilde{G})= 2m(G)-2\kappa(G)$
if and only if $r(\widetilde{H})=2m(H)-2c(H)$.
\end{proof}

\noindent\begin{lemma}\label{le:4.11}
Let $\widetilde{G}$ be a connected mixed graph  with $\kappa(G)=c(G)-1~(c(G)\geq 2)$ vertex-disjoint even cycles and $\widetilde{G}$ has no pendant vertices. If
 $r(\widetilde{G})= 2m(G)-2\kappa(G)$, then $\widetilde{G}$ is a connected mixed graph in $\mathfrak{G}_{1}$.
\end{lemma}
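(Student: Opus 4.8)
The plan is to argue by induction on $c(G)$, separating the cases where $\widetilde G$ does and does not have a pendant even cycle. The first case is immediate: if $\widetilde G$ has no pendant even cycle then it meets every hypothesis of Lemma~\ref{le:4.9}, and part~(a) of that lemma forces $\widetilde G$ to be a mixed $\widetilde{\infty}(p,1,q)\in\mathfrak{X}_1$ or a mixed $\widetilde{\theta}(p,l,q)\in\mathfrak{X}_2$, so $\widetilde G\in\mathfrak{G}_1$ and we are done. Hence the substance is the case where $\widetilde G$ has a pendant even cycle $\widetilde{C}_p$ ($p$ even), with unique degree-$3$ vertex $x$; since a graph carrying a pendant (even) cycle is neither a mixed $\widetilde{\infty}(\cdot,1,\cdot)$ nor a mixed $\widetilde{\theta}$, the goal in this case is to show $\widetilde G\in\mathfrak{G}_0$. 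Put $\widetilde H=\widetilde G-V(C_p)+x$ and $\widetilde F=\widetilde H-x=\widetilde G-V(C_p)$.

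First I would remove the pendant even cycle and record the resulting identities. Since $\widetilde{C}_p$ is a pendant cycle, $C_p$ is a block of $G$ and every other cycle of $G$ is vertex-disjoint from it; hence $c(H)=c(G)-1$, $\kappa(H)=\kappa(G)-1=c(H)-1$, the even cycles of $\widetilde H$ remain pairwise vertex-disjoint, and $m(G)=m(F)+p/2$ with $m(F)\le m(H)\le m(F)+1$. If $\sigma(\widetilde{C}_p)\not\equiv p\pmod 4$, then by Lemma~\ref{le:2.8} and Theorem~\ref{th:1.2}, $r(\widetilde G)=r(\widetilde F)+p\ge 2m(F)-2\kappa(F)+p=2m(G)-2\kappa(G)+2$, contradicting the hypothesis; so $\sigma(\widetilde{C}_p)\equiv p\pmod 4$, $r(\widetilde G)=r(\widetilde H)+p-2$, and therefore $r(\widetilde H)=2m(F)-2\kappa(F)$. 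Comparing with $2m(F)-2\kappa(F)\le 2m(H)-2\kappa(H)\le r(\widetilde H)$ (Theorem~\ref{th:1.2}) forces equality throughout, so $m(H)=m(F)$ and $r(\widetilde H)=2m(H)-2\kappa(H)$.

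Thus $\widetilde H$ is connected with pairwise vertex-disjoint even cycles, $\kappa(H)=c(H)-1$ and $r(\widetilde H)=2m(H)-2\kappa(H)$, but it carries the pendant vertex $x$, whose unique neighbour $y$ lies outside every even cycle of $H$ by Lemma~\ref{le:4.3}. Iterating pendant $K_2$ deletions and invoking Lemmas~\ref{le:4.3} and~\ref{le:4.4} at each step (which keep both the identity $r=2m-2\kappa$ and the value of $\kappa$), I reach a crucial subgraph $\widetilde H_1$ with $\kappa(H_1)=\kappa(H)$ and $r(\widetilde H_1)=2m(H_1)-2\kappa(H_1)$; since always $\kappa(H_1)\le c(H_1)\le c(H)=\kappa(H_1)+1$, at most one cyclomatic unit is destroyed during the reduction. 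By Lemma~\ref{le:4.5}(a) every connected component of $\widetilde H_1$ attains $r=2m-2\kappa$; by Theorem~\ref{th:1.1} together with Lemmas~\ref{le:2.4} and~\ref{le:2.11}, each non-isolated component with $c=\kappa$ is a single even cycle $\widetilde C$ with $\sigma(\widetilde C)\equiv|C|\pmod 4$, and there is at most one further component $\widetilde K$ with $c(K)=\kappa(K)+1$. If such a $\widetilde K$ has $c(K)=1$ it is an odd cycle, which by Lemma~\ref{le:2.5} must have odd signature; if $c(K)\ge 2$ then $c(K)\le c(H)<c(G)$, so the induction hypothesis gives $\widetilde K\in\mathfrak{G}_1$, and a rank argument (re-attaching the deleted pendant edges and $\widetilde{C}_p$ onto a graph in $\mathfrak{X}_1\cup\mathfrak{X}_2$ would raise $r(\widetilde G)$ above $2m(G)-2\kappa(G)$) leaves only $\widetilde K\in\mathfrak{G}_0$. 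In every case $\widetilde K$, hence $\widetilde H_1$, hence $\widetilde H$ and finally $\widetilde G$, has pairwise vertex-disjoint cycles and a pendant odd cycle $\widetilde{C}_{q_0}$ with odd signature; applying Lemma~\ref{le:4.10} to $\widetilde G$ and $\widetilde{C}_{q_0}$, the hypothesis $r(\widetilde G)=2m(G)-2\kappa(G)$ becomes equivalent to $r\bigl(\widetilde G-V(C_{q_0})+x_0\bigr)=2m-2c$, which is precisely condition~(c) in the definition of $\mathfrak{G}_0$. With conditions~(a) and~(b) already in hand, $\widetilde G\in\mathfrak{G}_0\subseteq\mathfrak{G}_1$.

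The delicate point is the reassembly in the third paragraph: one must show that re-growing the deleted pendant $K_2$'s and re-attaching $\widetilde{C}_p$ to the now-understood crucial subgraph cannot create two cycles sharing a vertex and cannot destroy a pendant odd cycle — equivalently, that $\widetilde K\notin\mathfrak{X}_1\cup\mathfrak{X}_2$ and that the odd cycle surfacing in $\widetilde G$ is genuinely pendant with odd signature. This requires tracking step by step which cyclomatic units the successive quasi-pendant vertices meet (so that $c$ drops by at most one in total, consistent with $\kappa(H_1)\le c(H_1)\le\kappa(H_1)+1$), and combining Theorem~\ref{th:1.1}, Lemmas~\ref{le:2.5}, \ref{le:2.8}, \ref{le:4.3}--\ref{le:4.5}, \ref{le:4.7}, \ref{le:4.8}, \ref{le:4.10} with the induction hypothesis. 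Everything else is routine bookkeeping with the matching-number and rank identities already established.
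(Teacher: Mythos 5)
Your opening moves are sound and run parallel to the paper's: the reduction via Lemma~\ref{le:4.9}(a) to the case of a pendant even cycle, and the computation showing $\sigma(\widetilde{C}_p)\equiv p\ (\mathrm{mod}\ 4)$ and $r(\widetilde{H})=2m(H)-2\kappa(H)$ for $\widetilde{H}=\widetilde{G}-V(C_p)+x$ (the analogue of Lemma~\ref{le:4.14}) are correct, as is the analysis of the components of the crucial subgraph $\widetilde{H}_1$ with $c=\kappa$. The first genuine gap is that nothing you establish guarantees that $\widetilde{H}_1$ contains a component $\widetilde{K}$ with $c(K)=\kappa(K)+1$ at all: Lemma~\ref{le:4.3} only forbids deleted quasi-pendant vertices from lying on \emph{even} cycles, so the single odd cyclomatic unit can be swallowed during the pendant $K_2$ reduction (you even note $c(H_1)$ may be $\kappa(H_1)$), and in that case your conclusion ``in every case $\widetilde{G}$ has a pendant odd cycle with odd signature'' has no support -- yet it must be proved, since a $\widetilde{G}$ with a pendant even cycle can only land in $\mathfrak{G}_0$. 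The paper closes exactly this hole with a parity argument you never invoke: by Lemma~\ref{le:4.6} the graph $G$ has a perfect matching, so the crucial subgraph (obtained after deleting an odd number of vertices) has odd order; Lemma~\ref{le:4.6} applied to that crucial subgraph then forces an odd-cycle component, and the leftover possibility of isolated vertices is excluded separately using Lemma~\ref{le:4.3} together with the fact that the intermediate graph has exactly one pendant vertex.

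The second gap consists of the two steps you yourself flag as ``delicate'' and only assert: (i) excluding $\widetilde{K}\in\mathfrak{X}_1\cup\mathfrak{X}_2$ -- note that such a $\widetilde{K}$ satisfies $r(\widetilde{K})=2m(K)-2\kappa(K)$ by Lemmas~\ref{le:4.7}(a) and \ref{le:4.8}(a), so no contradiction is visible at the level of $\widetilde{K}$ alone; the contradiction must come from the global graph, and no such rank computation is carried out; and (ii) promoting a pendant odd cycle of a component of $\widetilde{H}_1$ to a pendant odd cycle of $G$ -- the deleted quasi-pendant vertices may be adjacent to vertices of that cycle (Lemma~\ref{le:4.3} does not prevent this), so re-attachment can create several vertices of degree at least $3$ on it; the same issue affects the pairwise disjointness of all cycles required by $\mathfrak{G}_0$ and by Lemma~\ref{le:4.10}. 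The paper's route avoids both difficulties: its parity argument produces an odd cycle that is an entire component of the crucial subgraph of $\widetilde{G}-y$ (with $y$ chosen next to the degree-$3$ vertex of the pendant even cycle), whose pendancy in $G$ then gives $c(G)$ pairwise disjoint cycles, and condition (c) of $\mathfrak{G}_0$ is verified for $\widetilde{G}$ directly through the equivalence in Lemma~\ref{le:4.10}, with no induction on $c(G)$ and no reassembly step. As written, your proposal is an outline whose decisive steps are missing.
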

\begin{proof}
 If $\widetilde{G}$ is  a connected mixed bicyclic graph in $\mathfrak{G}_{1}\setminus \mathfrak{G}_{0}$, then we are done. If not, since $r(\widetilde{G})= 2m(G)-2\kappa(G)$, by Lemma \ref{le:4.9}(a), $G$ has a pendant even cycle, say $C$. Let $x$ be a vertex in $V(C)$ with $d_{G}(x)=3$ and $y$ be one of vertices adjacent to $x$ in $V(C)$. By Lemma \ref{le:4.1}, $r(\widetilde{G}-y)=2m(G-y)-2\kappa(G-y)$.
After a series of pendant $K_{2}$ deletion operations for $\widetilde{G}-y$, we can get a crucial subgraph $\widetilde{F}$. By Lemmas \ref{le:4.3} and \ref{le:4.4}, $r(\widetilde{F})=2m(F)-2\kappa(F)$. By Lemma \ref{le:4.6}, $\widetilde{G}$ has a perfect matching. Hence the order of $\widetilde{F}$ is odd.

Suppose that $c(F)\geq 2$ and $\widetilde{F}$ has no isolated vertices. If $\widetilde{F}$ has no connected component which is an odd cycle, by Lemma \ref{le:4.6}, $\widetilde{F}$ has a perfect matching, a contradiction to the order of $\widetilde{F}$ is odd.
Hence  $\widetilde{F}$ has a connected component which is an odd cycle, say $C_{q}$. Then $C_q$ is a pendant odd cycle of $G$. Since $r(\widetilde{F})=2m(F)-2\kappa(F)$, by Lemma \ref{le:4.5}(a), $r(\widetilde{C}_{q})=2m(C_q)-2\kappa(C_q)=q-1$. By Lemma \ref{le:2.5}, $\widetilde{C}_{q}$ is an odd cycle with odd signature. Let $w$ be a vertex in $V(C_{q})$ with $d_{G}(w)=3$.
Let $\widetilde{H}=\widetilde{G}-V(C_q)+w$. By Lemma \ref{le:4.10}, $r(\widetilde{H})=2m(H)-2c(H)$. Thus $\widetilde{G}$ is a mixed graph in $\mathfrak{G}_{0}$, as required.

Otherwise, either $\widetilde{F}$ is isomorphic to a mixed odd  cycle (in this case, $\widetilde{G}$ is a mixed graph in $\mathfrak{G}_{0}$, as required), or $\widetilde{F}$ has at least one isolated vertex. Since $\widetilde{G}-y$ is a connected graph with exactly one pendant vertex,  by Lemma \ref{le:4.3}, the isolated vertex in $\widetilde{F}$ must lie  on the odd cycle of $\widetilde{G}-y$, this is impossible.
\end{proof}

For a mixed unicyclic graph, we have the following lemma.

\noindent\begin{lemma}\label{le:4.12}
Let $\widetilde{G}$ be a  mixed unicyclic graph  with unique odd cycle $\widetilde{C}_{q}$.
\begin{enumerate}[(a)]
\item If $r(\widetilde{G})= 2m(G)$, then $\widetilde{G}$ satisfies (b) of Theorem \ref{th:1.3}.
\item If $r(\widetilde{G})= 2m(G)+1$, then $\widetilde{G}$ satisfies (b) of Theorem \ref{th:1.4}.
\end{enumerate}
\end{lemma}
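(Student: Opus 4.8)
The plan is to prove both parts simultaneously by induction on the order $|V(\widetilde{G})|$, writing $r(\widetilde{G})=2m(G)+s$ with $s\in\{0,1\}$. Since a mixed unicyclic graph whose unique cycle is odd has $c(G)=1$ and $\kappa(G)=0$, we have $2m(G)+s=2m(G)-2\kappa(G)+s$, and condition (a) of each of Theorems~\ref{th:1.3} and \ref{th:1.4} is vacuous; so the task reduces to producing, when $s=0$, a series of pendant $K_2$ deletions carrying $\widetilde{G}$ to a crucial subgraph $\widetilde{G}_1$ of the form required in Theorem~\ref{th:1.3}(b), and, when $s=1$, one of the form required in Theorem~\ref{th:1.4}(b). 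By Lemma~\ref{le:4.5} it suffices to treat the connected case, since the unique cycle lies in one component and the remaining components are mixed trees, which contribute only isolated vertices after pendant $K_2$ deletions.

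First I would dispose of the base case: if $\widetilde{G}$ has no pendant vertex, then $\widetilde{G}=\widetilde{C}_q$. By Lemma~\ref{le:2.5}, $r(\widetilde{C}_q)=q-1$ if $\sigma(\widetilde{C}_q)$ is odd and $r(\widetilde{C}_q)=q$ if $\sigma(\widetilde{C}_q)$ is even, while $2m(G)=q-1$. Hence $s=0$ forces $\sigma(\widetilde{C}_q)$ to be odd, and then $\widetilde{G}_1:=\widetilde{C}_q\in\mathfrak{G}_2$ with $\kappa(\widetilde{G}_1)=0=\kappa(\widetilde{G})$ witnesses Theorem~\ref{th:1.3}(b); while $s=1$ forces $\sigma(\widetilde{C}_q)$ to be even, and then $\widetilde{G}_1:=\widetilde{C}_q\in\mathfrak{F}_2$ witnesses Theorem~\ref{th:1.4}(b).

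For the inductive step I would choose a pendant vertex $u$ of $\widetilde{G}$ with unique neighbor $v$ and set $\widetilde{H}=\widetilde{G}-\{u,v\}$. As $\widetilde{G}$ has no even cycle, Lemma~\ref{le:4.4} applies, and since $r(\widetilde{G})=2m(G)+s\le 2m^{\ast}(G)$ by Theorem~\ref{th:1.2} the integer $s$ lies in the admissible range there; hence $r(\widetilde{H})=2m(H)+s$. I would then split into two cases. If $v\in V(C_q)$, then $\widetilde{H}$ is a mixed forest, so $r(\widetilde{H})=2m(H)$ by Lemmas~\ref{le:2.4} and \ref{le:2.3}(a); this forces $s=0$, so the situation of part (b) cannot arise here, and for part (a) repeated pendant $K_2$ deletions (a forest with an edge always has a pendant vertex) carry $\widetilde{H}$, hence $\widetilde{G}$, to an edgeless crucial subgraph $\widetilde{G}_1$, which is empty or a disjoint union of isolated vertices with $\kappa(\widetilde{G}_1)=0=\kappa(\widetilde{G})$, as required. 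If instead $v\notin V(C_q)$, write $\widetilde{H}=\widetilde{H}_0\cup\widetilde{T}_1\cup\cdots\cup\widetilde{T}_k$, where $\widetilde{H}_0$ is the component containing $\widetilde{C}_q$ --- again a connected mixed unicyclic graph with unique odd cycle $\widetilde{C}_q$, of strictly smaller order --- and each $\widetilde{T}_j$ is a mixed tree. By Lemmas~\ref{le:2.3}(a) and \ref{le:2.4}, $r(\widetilde{H}_0)=2m(H_0)+s$, so the induction hypothesis applies to $\widetilde{H}_0$ and yields a crucial subgraph $(\widetilde{H}_0)_1$ of the form required in Theorem~\ref{th:1.3}(b) (if $s=0$) or Theorem~\ref{th:1.4}(b) (if $s=1$). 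Splicing together the deletion of $\{u,v\}$, the series reaching $(\widetilde{H}_0)_1$ inside $\widetilde{H}_0$, and pendant $K_2$ deletions reducing each $\widetilde{T}_j$ to isolated vertices produces a crucial subgraph $\widetilde{G}_1=(\widetilde{H}_0)_1\cup I$ with $I$ a set of isolated vertices; adjoining isolated vertices changes neither $\kappa$ nor the number of components lying in $\mathfrak{G}_2$ (resp.\ $\mathfrak{F}_2$) nor the ``otherwise an even cycle or an isolated vertex'' requirement, so $\widetilde{G}_1$ has exactly the shape demanded by Theorem~\ref{th:1.3}(b) (resp.\ Theorem~\ref{th:1.4}(b)).

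I expect the main obstacle to be organizational rather than conceptual: one must verify that the spliced sequence of pendant $K_2$ deletions is legitimate and genuinely terminates at a crucial subgraph (no pendant vertices survive), that no even cycle is ever created along the way, and that the ``at most one'' (resp.\ ``exactly one'') distinguished component is preserved when isolated vertices are adjoined; together with the borderline case $v\in V(C_q)$, where one must confirm that destroying the odd cycle is incompatible with $s=1$. The genuinely structural fact --- that a unicyclic crucial subgraph whose cycle survives is a single mixed odd cycle of the correct signature type --- is already packaged inside the induction hypothesis.
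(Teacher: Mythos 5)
Your proof is correct and takes essentially the same route as the paper's: the paper argues by minimal counterexample, stripping pendant $K_2$'s via Lemma \ref{le:4.4} until only the odd cycle (plus isolated vertices) remains and then invoking Lemma \ref{le:2.5} to pin down its signature, which is exactly your induction read contrapositively. The only difference is that you spell out details the paper leaves implicit (the case where the quasi-pendant vertex $v$ lies on $C_q$, and the tree components reducing to isolated vertices), a matter of bookkeeping rather than of method.
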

\begin{proof} We first prove (a) of this lemma. Suppose that the assertion is false. Let $\widetilde{G}_{0}$ be a counterexample with minimum order. Clearly, $r(\widetilde{G}_{0})= 2m(G_{0})$ and $\widetilde{G}_{0}$ does not satisfy (b) of Theorem \ref{th:1.3}.
If $G_{0}$ has a pendant vertex $x$ with unique neighbor $y$, then by Lemma \ref{le:4.4}, $r(\widetilde{G}_{0}-x-y)= 2m(G_{0}-x-y)$, a contradiction to minimality of order of $\widetilde{G}_{0}$. So $G_{0}$ consists of an odd cycle $C_q$ and some isolated vertices. Since $\widetilde{G}_{0}$ does not satisfy (b) of Theorem \ref{th:1.3}, $\widetilde{C}_q$ is a mixed odd cycle with even signature. So by Lemma \ref{le:2.5}, $r(\widetilde{G}_{0})=2m(G_{0})+1$, a contradiction.

Similarly, we can obtain (b) of this lemma.
\end{proof}

For lower bound of Inequality (1), we have the following lemma.

\noindent\begin{lemma}\label{le:4.13}
Let $\widetilde{G}$ be a mixed graph. If $r(\widetilde{G})=2m(G)-2c(G)$, then  a series of pendant $K_{2}$ deletion operations can switch $\widetilde{G}$ to a crucial subgraph $\widetilde{G}_{1}$, which is disjoint union of $c(G)$ even cycles and some isolated vertices~(possibly non-existent).
\end{lemma}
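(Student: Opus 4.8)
The plan is to carry out the pendant $K_2$ deletions greedily, to check that the hypothesis $r(\widetilde{G})=2m(G)-2c(G)$ survives each deletion, and then to analyse the crucial subgraph that results. For the first part, suppose $\widetilde{G}$ has a pendant vertex $x$ with unique neighbour $y$. By Lemma~\ref{le:2.14}(b) we have $r(\widetilde{G}-x-y)=2m(G-x-y)-2c(G-x-y)$, and by Lemma~\ref{le:2.14}(a) the vertex $y$ lies on no mixed cycle of $\widetilde{G}$; since $x$ is pendant it too lies on no cycle, so removing $\{x,y\}$ leaves the cyclomatic number unchanged, i.e.\ $c(G-x-y)=c(G)$. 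Iterating this (equivalently, by induction on $|V(G)|$), any maximal sequence of pendant $K_2$ deletions turns $\widetilde{G}$ into a crucial subgraph $\widetilde{G}_1$ which has no pendant vertex, satisfies $r(\widetilde{G}_1)=2m(G_1)-2c(G_1)$, and has $c(G_1)=c(G)$. Since a disjoint union of $c(G)$ even cycles together with isolated vertices has cyclomatic number $c(G)$, it remains only to prove that $\widetilde{G}_1$ is of this form.

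By Theorem~\ref{th:1.1} applied to $\widetilde{G}_1$, its cycles are pairwise vertex-disjoint, each cycle $\widetilde{C}_q$ satisfies $\sigma(\widetilde{C}_q)\equiv q~(\mathrm{mod}~4)$, and $m(T_{G_1})=m([T_{G_1}])$. As $T_{G_1}$, $[T_{G_1}]$ and $m$ all decompose over connected components, for every connected component $\widetilde{G}'$ of $\widetilde{G}_1$ (with underlying graph $G'$) we get $m(T_{G'})=m([T_{G'}])$. If $c(G')=0$, then $G'$ is a tree with no leaf, hence a single vertex. Suppose instead $c(G')\ge 1$ and consider the tree $T_{G'}$. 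I claim every leaf of $T_{G'}$ is a cyclic vertex: if $v$ is a non-cyclic vertex of $T_{G'}$, then $v$ has at most one neighbour on any one cycle of $G'$ — two neighbours of $v$ on a common cycle $C$ would produce a cycle through $v$ meeting $C$, contradicting vertex-disjointness — so $\deg_{T_{G'}}(v)=\deg_{G'}(v)\ge 2$ because $G'$ has no pendant vertex, and $v$ is not a leaf.

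Now if $T_{G'}$ had at least two vertices, it would be a tree with at least one edge all of whose leaves lie in the set $W$ of its cyclic vertices; using $r(T)=2m(T)$ for forests (Lemma~\ref{le:2.4}) and the fact that deleting vertices cannot raise the matching number, the contrapositive of Lemma~\ref{le:2.11} gives $m(T_{G'})>m(T_{G'}-W)=m([T_{G'}])$, contradicting $m(T_{G'})=m([T_{G'}])$. Hence $T_{G'}$ is a single cyclic vertex, so $G'$ is a single cycle $C_q$. Then $r(\widetilde{C}_q)=2m(C_q)-2c(C_q)=2\lfloor q/2\rfloor-2$, which equals $q-3$ if $q$ is odd; this is impossible since Lemma~\ref{le:2.5} gives $r(\widetilde{C}_q)\in\{q-1,q\}$ for odd $q$, so $q$ is even. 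Therefore every component of $\widetilde{G}_1$ is an isolated vertex or an even cycle, and since $c(G_1)=c(G)$ exactly $c(G)$ of them are even cycles, which is the assertion.

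The main obstacle is this last structural analysis of $\widetilde{G}_1$ — proving that every component carrying a cycle is a bare cycle, in particular that no component has cyclomatic number $\ge 2$. This is precisely where Theorem~\ref{th:1.1} is used: the absence of pendant vertices forces the leaves of the contracted tree $T_{G'}$ to be exactly its cyclic vertices, and then condition (c) of Theorem~\ref{th:1.1} together with Lemma~\ref{le:2.11} collapses $T_{G'}$ to a single vertex. The remaining ingredients — invariance of the hypothesis under a pendant $K_2$ deletion and the parity of the surviving cycles via Lemma~\ref{le:2.5} — are routine.
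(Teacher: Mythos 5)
Your proof is correct and uses the same key ingredients as the paper's proof (Theorem~\ref{th:1.1}, Lemmas~\ref{le:2.4}, \ref{le:2.11}, \ref{le:2.14} and \ref{le:2.5}), but it arranges them along a genuinely different route. The paper argues by induction on $|V(G)|$: from $m(T_G)=m([T_G])$ and Lemma~\ref{le:2.4} it gets $r(T_G)=r([T_G])$, applies Lemma~\ref{le:2.11} in the positive direction to exhibit a pendant vertex of $G$ itself, deletes that pendant $K_2$ (Lemma~\ref{le:2.14} preserving the hypothesis), and invokes the induction hypothesis, the terminal case being that $T_G$ has no edge. You instead first strip all pendant $K_2$'s, observing via Lemma~\ref{le:2.14} that the hypothesis and the cyclomatic number survive each deletion, and then classify the resulting pendant-free crucial subgraph componentwise: the absence of pendant vertices together with the vertex-disjointness of cycles forces every leaf of $T_{G'}$ to be a cyclic vertex, and the contrapositive of Lemma~\ref{le:2.11} then collapses $T_{G'}$ to a single vertex. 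A merit of your version is that it makes explicit, via Lemma~\ref{le:2.5}, why the cycles left in the crucial subgraph must be even, a point the paper's proof leaves implicit. Two small steps deserve to be spelled out: the equality $r(\widetilde{C}_q)=2m(C_q)-2c(C_q)$ for a single component follows from Lemma~\ref{le:2.3} together with the lower bound of Inequality~(\ref{eq:1}) applied to every component (equality of the sums forces equality summand by summand), and likewise the componentwise deduction of $m(T_{G'})=m([T_{G'}])$ from Theorem~\ref{th:1.1}(c) uses $m(T_{G'})\geq m([T_{G'}])$, i.e.\ monotonicity of the matching number under vertex deletion, not additivity alone. Both are routine and do not affect the correctness of your argument.
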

\begin{proof} 
We apply induction on $|V(G)|$. If $|V(G)|\leq 2$, the assertion  holds trivially. Now assume that $|V(G)|\geq 3$. By Theorem \ref{th:1.1}(a) and (b), any two  cycles (if any) of $\widetilde{G}$ share no common vertices and
  each  cycle $\widetilde{C}_{q}$ of $\widetilde{G}$ satisfies  $\sigma(\widetilde{C}_{q})\equiv q~(\mathrm{mod}~4)$. If $T_G$ is an empty graph, then $G$ consists of disjoint cycles and some isolated vertices. So the assertion  holds. Suppose that $T_G$ has at least an edge. By Theorem \ref{th:1.1}(c), $m(T_G)=m([T_G])$. According to Lemma \ref{le:2.4}, $r(T_G)=r([T_G])$. By Lemma \ref{le:2.11}, $T_G$ has at least a pendant vertex which is not cyclic vertex, say $x$. Then $x$ is a pendant vertex of $G$. Let $y$ be the unique neighbor of $x$ in $G$.  By Lemma \ref{le:2.14}, $y$ lies outside any mixed cycle of $\widetilde{G}$ and
 $r(\widetilde{G}-x-y)=2m(G-x-y)-2c(G-x-y)$. By induction hypothesis,  a series of pendant $K_{2}$ deletion operations can switch $\widetilde{G}-x-y$ to a crucial subgraph $\widetilde{G}_{1}$, which is disjoint union of $c(G-x-y)$ even cycle and some isolated vertices~(possibly non-existent). Note that $c(G)=c(G-x-y)$. Thus  a series of pendant $K_{2}$ deletion operations can switch $\widetilde{G}$ to a crucial subgraph $\widetilde{G}_{1}$, which is disjoint union of $c(G)$ even cycle and some isolated vertices~(possibly non-existent).
\end{proof}

Now, we will prove Theorem \ref{th:1.3}.

\medskip
\noindent\textbf{Proof of Theorem \ref{th:1.3}.} \textbf{Sufficiency:} By conditions (a) and (b) and Lemmas \ref{le:2.5}, \ref{le:4.7}(a), \ref{le:4.8}(a) and \ref{le:4.10}, for each connected component $\widetilde{F}_{i}$ of $\widetilde{G}_{1}$, $r(\widetilde{F}_{i})=2m(F_{i})-2\kappa(F_{i})$. By Lemma \ref{le:4.5}(a), $r(\widetilde{G}_{1})=2m(G_1)-2\kappa(G_1)$. Suppose that switching $\widetilde{G}$ to $\widetilde{G}_{1}$ by $k$-step pendant $K_{2}$ deletion operations. By Lemmas \ref{le:2.2} and \ref{le:2.7}, $r(G)=r(G_{1})+2k=2m(G_1)-2\kappa(G_1)+2k=2m(G)-2\kappa(G)$.

\textbf{Necessity:}
 By Lemma \ref{le:4.5}(a), we can assume that $\widetilde{G}$ is connected. Since $r(\widetilde{G})=2m(G)-2\kappa(G)$, by Lemma \ref{le:4.1}, $\kappa(G-u)=\kappa(G)-1$ for any vertex $u$ on any even cycle of $G$. By Lemma \ref{le:2.10}, assertion (a) holds.

 For assertion (b), we apply induction on the order of $G$. If $|V(G)|\leq 2$, then the results hold trivially.
Now assume that $|V(G)|\geq 3$. If $c(G)=0$, then $G$ is an acyclic graph. By Lemma \ref{le:2.4},  $r(\widetilde{G})=2m(G)$ and we are done.
If $c(G)=1$, then $\kappa(G)=c(G)-1=0$ and $\widetilde{G}$ is a mixed unicyclic  graph with an odd cycle.  By Lemma \ref{le:4.12}(a),  assertion (b) holds.

Suppose that $c(G)\geq 2$. Note that $\mathfrak{G}_{1}\subseteq \mathfrak{G}_{2}$. If $\widetilde{G}$ is a mixed graph in $\mathfrak{G}_{1}$, then we are done.  If not, by Lemma \ref{le:4.11}, $\widetilde{G}$ has at least one pendant vertex, say $x$. Let $y$ be the unique neighbor of $x$ in $G$.
 Let $\widetilde{H}=\widetilde{G}-\{x,y\}$. If  $y$ is on the odd cycle of $G$, then by Lemma \ref{le:4.4},
$r(\widetilde{H})= m(H)-2\kappa(H)= m(H)-2c(H)$. By Theorem \ref{th:1.1} (a) and (b), any two  cycles (if any) of $\widetilde{H}$ share no common vertices and
  each  cycle $\widetilde{C}_{q}$ of $\widetilde{H}$ satisfies  $\sigma(\widetilde{C}_{q})\equiv q~(\mathrm{mod}~4)$. Then by Lemma \ref{le:4.13},  assertion (b) holds.
Suppose that $y$ is not on the odd cycle of $G$. Since $r(\widetilde{G})=m(G)-2\kappa(G)$, by Lemma \ref{le:4.3}, $y$ is not on any even cycle of $G$.
By Lemma \ref{le:4.4}, $r(\widetilde{H})=m(H)-2\kappa(H)$.
Note that $\kappa(G)= \kappa(H)= c(H)-1=c(G)-1$. By induction hypothesis, $\widetilde{H}$ satisfies condition (b). Thus $\widetilde{G}$ satisfies condition (b). ~~~~~~~~~~~~~~~~~~~~~~~~~~~~~~\quad $\square$

\medskip
Now, we give an example that satisfies the equality in Theorem \ref{th:1.3}. Let $\widetilde{G}$ be a  mixed graph with an odd cycle $\widetilde{C}_{3}$ having odd signature and each other cycle  is $\widetilde{C}_{4}$ with $\sigma(\widetilde{C}_{4})\equiv 0~(\mathrm{mod}~ 4)$ shown in Fig. 3.
By Lemmas \ref{le:2.2}, \ref{le:2.5} and \ref{le:2.7}, $r(\widetilde{G})=r(\widetilde{G}-\{u,v\})+2=(4-2)\kappa(G)+3-1+2=2\kappa(G)+4$ and $m(G)=m(G-\{u,v\})+1=2\kappa(G)+1+1=2\kappa(G)+2$.  Thus $r(\widetilde{G})=2m(G)-2\kappa(G)$.

We give the following four lemmas which will be useful in the proof of Theorem \ref{th:1.4}.
\noindent\begin{lemma}\label{le:4.14}
Let $\widetilde{G}$ be a  mixed graph  with pairwise vertex-disjoint  cycles and $\widetilde{G}$ has no pendant vertices. If $\widetilde{G}$ has a pendant even cycle $\widetilde{C}_{q}$ with $\sigma(\widetilde{C}_{q})\equiv q~(\mathrm{mod}~ 4)$ and  $r(\widetilde{G})=2m(G)-2\kappa(G)+1$, then $r(\widetilde{H})=2m(H)-2\kappa(H)+1$, where  $\widetilde{H}=\widetilde{G}-V(C_q)+x$ and $x$ is the unique vertex of degree $3$ in $C_q$.
\end{lemma}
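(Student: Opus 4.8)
The plan is to reduce $\widetilde{G}$ to $\widetilde{H}$ by peeling off the pendant even cycle $\widetilde{C}_q$ and tracking how the three quantities $r(\cdot)$, $m(\cdot)$ and $\kappa(\cdot)$ change. Since $\widetilde{C}_q$ is a pendant even cycle with $\sigma(\widetilde{C}_q)\equiv q\pmod 4$, Lemma~\ref{le:2.8} applies (the case ``$n$ even, $\sigma(\widetilde{C}_n)\equiv n\pmod 4$''), giving $r(\widetilde{G})=r(\widetilde{H})+q-2$, where $\widetilde{H}=\widetilde{G}-V(C_q)+x$. This is the key structural identity. The remaining work is purely combinatorial bookkeeping on the underlying graph $G$.

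Next I would pin down $m(G)$ and $\kappa(G)$ in terms of $H$. For the matching number, the analogue of Lemma~\ref{le:2.15} for a pendant even cycle: if $C_q$ is a pendant even cycle of $G$ with $x$ its unique vertex of degree $3$, then a maximum matching of $G$ can be taken to saturate all of $V(C_q)\setminus\{x\}$ using $(q-2)/2$ edges on $C_q-x$ (a path on $q-1$ vertices), freeing $x$ for use in $H$; conversely any matching of $H$ extends by those $(q-2)/2$ edges. Hence $m(G)=m(H)+\tfrac{q-2}{2}$. For the number of even cycles, since the cycles of $\widetilde{G}$ are pairwise vertex-disjoint and $C_q$ is one even cycle that is absent from $H$ (while contracting it to the vertex $x$ removes exactly that one cycle and creates no new ones), $\kappa(G)=\kappa(H)+1$. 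Putting these together: $2m(G)-2\kappa(G)+1=2m(H)+(q-2)-2\kappa(H)-2+1=2m(H)-2\kappa(H)+1+(q-2)$.

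Now combine with the rank identity. We are given $r(\widetilde{G})=2m(G)-2\kappa(G)+1$, so $r(\widetilde{H})+q-2=2m(H)-2\kappa(H)+1+(q-2)$, and cancelling $q-2$ yields $r(\widetilde{H})=2m(H)-2\kappa(H)+1$, as desired. I should double-check the degenerate case $q=2$ is excluded since $C_q$ is a genuine cycle ($q\ge 3$, in fact $q\ge 4$ as it is even), so $q-2\ge 2$ and all the path-matching counts above are valid; I should also note $\widetilde{H}$ inherits the hypotheses needed later (pairwise vertex-disjoint cycles, etc.), though that is not strictly part of this statement.

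The main obstacle is getting the matching-number identity $m(G)=m(H)+\tfrac{q-2}{2}$ cleanly: unlike the pendant-odd-cycle case in Lemma~\ref{le:2.15}, here $C_q-x$ is an \emph{odd} path (on $q-1$ vertices, $q-1$ odd), which has a near-perfect matching leaving exactly one vertex — a neighbor of $x$ — unsaturated, not $x$ itself; so the argument that $x$ is ``freed'' needs a small alternating-path/augmenting-path adjustment (moving the deficiency around $C_q$ so that it lands on $x$), and one must check no $M$-augmenting path across $x$ into $H$ exists by maximality, invoking Lemma~\ref{le:2.16} (Berge's Theorem) if needed. Once that identity is secured, everything else is substitution.
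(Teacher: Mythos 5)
Your use of Lemma \ref{le:2.8} to get $r(\widetilde{G})=r(\widetilde{H})+q-2$ and the count $\kappa(G)=\kappa(H)+1$ are correct and agree with the paper, but the matching identity you rely on, $m(G)=m(H)+\tfrac{q-2}{2}$, is false under the hypotheses of the lemma, and this is not a repairable technicality. Since $C_q$ is a pendant cycle, a maximum matching of $G$ splits into a matching inside $C_q$ and one in $H$ or $H-x$, so in general $m(G)=\max\{m(H)+\tfrac{q-2}{2},\,m(H-x)+\tfrac{q}{2}\}$, and the rank hypothesis forces the second case. Indeed, the paper takes a neighbour $y$ of $x$ on $C_q$ and applies Lemma \ref{le:4.2} (which uses $r(\widetilde{G})=2m(G)-2\kappa(G)+1$) to get $m(G-y)=m(G)-1$, while pendant $K_2$ deletions along the path $C_q-y$ give $m(G-y)=m(H)+\tfrac{q-2}{2}$; hence $m(G)=m(H)+\tfrac{q}{2}$, exactly one more than your value. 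Your proposed fix in the last paragraph --- shifting the deficiency of the odd path $C_q-x$ around the cycle so that $x$ is ``freed'' into $H$ --- would prove precisely the identity that the hypothesis rules out: if $m(G)=m(H)+\tfrac{q-2}{2}$ then $m(G-y)=m(G)$, contradicting Lemma \ref{le:4.2}. So the obstacle you flag cannot be overcome by a purely combinatorial alternating-path argument; the rank hypothesis must enter at exactly this point.

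The reason your final formula nevertheless comes out right is an arithmetic slip that compensates for the wrong identity: substituting $m(G)=m(H)+\tfrac{q-2}{2}$ and $\kappa(G)=\kappa(H)+1$ gives $2m(G)-2\kappa(G)+1=2m(H)-2\kappa(H)+(q-2)-1$, not $2m(H)-2\kappa(H)+(q-2)+1$ as you wrote. Carried out correctly, your identities would yield $r(\widetilde{H})=2m(H)-2\kappa(H)-1$, which violates the lower bound of Theorem \ref{th:1.2} --- a clear signal that the matching count was off by one. With the correct identity $m(G)=m(H)+\tfrac{q}{2}$, the same substitution into $r(\widetilde{H})=r(\widetilde{G})-(q-2)$ gives the stated conclusion $r(\widetilde{H})=2m(H)-2\kappa(H)+1$, which is how the paper argues.
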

\begin{proof}
 Let  $y$ be a vertex adjacent to $x$ in $V(C_{q})$.
 Since $r(\widetilde{G})=2m(G)-2\kappa(G)+1$, by Lemma \ref{le:4.2},  $m(G-y)=m(G)-1$. By Lemma \ref{le:2.7}, $m(G-y)=m(H)+(q-2)/2$. Hence $m(H)=m(G)-q/2$.
Since $\widetilde{C}_{q}$ is an even cycle with $\sigma(\widetilde{C}_{q})\equiv q~(\mathrm{mod}~ 4)$, by Lemma \ref{le:2.8}, $r(\widetilde{H})+q-2=r(\widetilde{G})=2m(G)-2\kappa(G)+1=2m(H)+q-2\kappa(H)-2+1$. It follows that $r(\widetilde{H})=2m(H)-2\kappa(H)+1$.
\end{proof}

\begin{figure}[htbp]
\centering
 \includegraphics[scale=1.5]{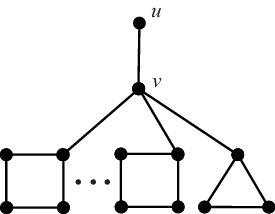}
\caption{An example that satisfies the equality in Theorem \ref{th:1.3} (or \ref{th:1.4}).}
\end{figure}

\noindent\begin{lemma}\label{le:4.15}
Let $\widetilde{G}$ be a  mixed graph  with pairwise vertex-disjoint cycles and $\widetilde{G}$ has   no odd cycles. Then $r(\widetilde{G})\neq 2m(G)-2\kappa(G)+1$.
\end{lemma}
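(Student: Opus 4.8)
\textbf{Proof proposal for Lemma \ref{le:4.15}.}
The plan is to argue by contradiction, assuming that $\widetilde{G}$ is a mixed graph with pairwise vertex-disjoint cycles, no odd cycles, and $r(\widetilde{G})=2m(G)-2\kappa(G)+1$. Since the hypothesis passes to connected components and, by Lemma \ref{le:4.5}(b), exactly one component $\widetilde{G}_{0}$ would satisfy $r(\widetilde{G}_{0})=2m(G_{0})-2\kappa(G_{0})+1$ while all others satisfy the equality $r=2m-2\kappa$, it suffices to derive a contradiction for a connected $\widetilde{G}$ whose cycles are all even and pairwise vertex-disjoint. Here $\kappa(G)=c(G)$, so the claim becomes $r(\widetilde{G})\neq 2m(G)-2c(G)+1$.

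First I would induct on $|V(G)|$, treating the small cases directly: if $c(G)=0$ then $\widetilde{G}$ is a mixed tree and $r(\widetilde{G})=2m(G)$ by Lemma \ref{le:2.4}, which is not of the form $2m(G)-2c(G)+1=2m(G)+1$. For the inductive step, distinguish whether $\widetilde{G}$ has a pendant vertex. If $x$ is a pendant vertex with unique neighbour $y$, then by Lemma \ref{le:4.3} (with $s=1$) the vertex $y$ lies on no even cycle of $G$, hence on no cycle at all since all cycles are even; then Lemma \ref{le:4.4} gives $r(\widetilde{H})=2m(H)-2\kappa(H)+1$ with $\widetilde{H}=\widetilde{G}-\{x,y\}$, and since $\kappa(H)=\kappa(G)=c(H)$ and $|V(H)|<|V(G)|$, the induction hypothesis is contradicted. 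So we may assume $\widetilde{G}$ has no pendant vertices; being connected with only pairwise vertex-disjoint even cycles and no pendant vertices forces $\widetilde{G}$ to be either a single even cycle or to have at least one pendant even cycle (a cycle attached to the rest at a single degree-$3$ vertex). In the first case Lemma \ref{le:2.5} gives $r(\widetilde{C}_{q})\in\{q,q-2\}=\{2m(C_q),2m(C_q)-2\}$, never $2m(C_q)-1$. In the second case, pick a pendant even cycle $\widetilde{C}_{q}$ with degree-$3$ vertex $x$ and set $\widetilde{H}=\widetilde{G}-V(C_q)+x$: by Lemma \ref{le:2.8} the value $r(\widetilde{G})$ equals either $r(\widetilde{H})+q-2$ (when $\sigma(\widetilde{C}_q)\equiv q\pmod 4$) or $r(\widetilde{F})+q$ with $\widetilde{F}=\widetilde{H}-x$ (otherwise), while Lemma \ref{le:2.15}'s even analogue — more precisely Lemma \ref{le:2.7} applied at a neighbour $y$ of $x$ in $C_q$, combined with Lemma \ref{le:4.2} giving $m(G-y)=m(G)-1$ — pins down $m(G)$ in terms of $m(H)$. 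Reducing modulo the appropriate parity of $r-2m+2c$, I would show that $r(\widetilde{G})=2m(G)-2c(G)+1$ forces $r(\widetilde{H})=2m(H)-2c(H)+1$ (in the first subcase, using $\kappa(H)=c(H)=c(G)-1$), contradicting the induction hypothesis; and in the second subcase it forces $r(\widetilde{F})$ to take a value incompatible with Lemma \ref{le:2.12}, $r(\widetilde{F})\neq 2m(F)-2c(F)+1$, after relating $m(F)$ and $c(F)$ to $m(G)$ and $c(G)$.

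The main obstacle I anticipate is the bookkeeping in the pendant-even-cycle case: Lemma \ref{le:2.8} splits according to the signature class of $\widetilde{C}_q$, and in the branch where $r(\widetilde{G})=r(\widetilde{F})+q$ one loses the degree-$3$ vertex $x$, so I need to carefully track how deleting $V(C_q)$ (versus $V(C_q)\setminus\{x\}$) changes the matching number and the cyclomatic number, and to confirm that the reduced graph $\widetilde{F}$ still has pairwise vertex-disjoint cycles (it does, as an induced subgraph). Once the arithmetic is organised so that the residue $r(\widetilde{G})-2m(G)+2c(G)$ is transferred faithfully to the smaller graph, the contradiction follows either from the induction hypothesis or from Lemma \ref{le:2.12}. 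An alternative, possibly cleaner route would be to use the coefficient formula of Lemma \ref{le:2.9} directly: with all cycles even and vertex-disjoint, analyse the parity of $a_{2m}$ and $a_{2m-1}$ to show $a_{2m-1}=0$ always (no basic subgraph of odd order) and that $a_{2m-2}\ne 0$ whenever $a_{2m}=0$, so the rank can only drop by an even amount from $2m$; I would keep this as a backup in case the inductive reduction gets unwieldy.
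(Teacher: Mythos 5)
Your proposal follows essentially the same route as the paper's proof: induction (the paper uses a minimal counterexample) on the order, removal of a pendant vertex with its neighbour via Lemmas \ref{le:4.3} and \ref{le:4.4}, the single even cycle case via Lemma \ref{le:2.5}, and otherwise reduction along a pendant even cycle $\widetilde{C}_q$ via Lemma \ref{le:2.8}, with the matching bookkeeping $m(H)=m(F)=m(G)-q/2$ obtained from Lemmas \ref{le:2.7} and \ref{le:4.2} exactly as the paper does. One correction in the branch where $\sigma(\widetilde{C}_q)$ is odd or $\sigma(\widetilde{C}_q)+q\equiv 2~(\mathrm{mod}~4)$: since $m(F)=m(G)-q/2$ and $\kappa(F)=\kappa(G)-1$, the relation $r(\widetilde{G})=r(\widetilde{F})+q$ forces $r(\widetilde{F})=2m(F)-2\kappa(F)-1$, which is ruled out by the lower bound of Inequality (\ref{eq:2}) in Theorem \ref{th:1.2}, not by Lemma \ref{le:2.12} (that lemma excludes the value $2m-2c+1$, whereas here the offending value is $2m-2\kappa-1$); so the contradiction you need is even more immediate than the one you cite, and this is how the paper concludes that $\sigma(\widetilde{C}_q)\equiv q~(\mathrm{mod}~4)$ before passing to $\widetilde{H}$ and the induction hypothesis. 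A second small wrinkle: after a pendant $K_2$ deletion the graph $\widetilde{G}-\{x,y\}$ may be disconnected, so your reduction-to-the-connected-case induction needs one further application of Lemma \ref{le:4.5}(b) at that step (the paper sidesteps this by taking its minimal counterexample over all, not necessarily connected, graphs and handling the disjoint union of even cycles directly with Lemmas \ref{le:2.3} and \ref{le:2.5}).
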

\begin{proof} Suppose that  the assertion is false. Let $\widetilde{G}_{0}$ be a counterexample with minimum order. Clearly, $r(\widetilde{G}_{0})= 2m(G_{0})-2\kappa(G_{0})+1$. If $c(G_{0})=0$, then $G_{0}$ is an acyclic graph. By Lemma \ref{le:2.4}, $r(\widetilde{G}_{0})= 2m(G_{0})$, a contradiction. Suppose that $c(G_{0})\geq 1$. If $G_0$ has a pendant vertex $u$ with unique neighbor $v$, by Lemmas \ref{le:4.3} and \ref{le:4.4}, $r(\widetilde{G}_{0}-u-v)= 2m(G_{0}-u-v)-2\kappa(G_{0}-u-v)+1$, a contradiction to minimality of order of $\widetilde{G}_0$. Thus $G_0$ has no pendant vertices. Note that $G_{0}$ has no odd cycles. If $G_0$ has no pendant even cycles, then $G_0$ consists of disjoint even cycles and some isolated vertices. By Lemmas \ref{le:2.3} and \ref{le:2.5},
$r(\widetilde{G}_0)\neq 2m(G_{0})-2\kappa(G_{0})+1$, a contradiction.

Now assume that  $\widetilde{G}_{0}$ has a pendant even cycle, say $\widetilde{C}_{q}$. Let $x$ be the unique vertex of degree 3 in $C_q$. Let $y$ be a vertex of $C_q$ adjacent to $x$. Let $\widetilde{F}=\widetilde{G}_{0}-V(C_{q})$ and $\widetilde{H}=\widetilde{F}+x$. Since $r(\widetilde{G}_{0})= 2m(G_{0})-2\kappa(G_{0})+1$, by Lemmas \ref{le:2.7} and \ref{le:4.2}, $m(F)=m(G_{0}-x)-(q-2)/2=m(G_{0})-1-(q-2)/2=m(G_{0})-q/2$ and $m(H)=m(G_{0}-y)-(q-2)/2=m(G_{0})-1-(q-2)/2=m(G_{0})-q/2$.
If $\sigma(\widetilde{C}_{q})$  is odd or $\sigma(\widetilde{C}_{q})+q \equiv 2~(\mathrm{mod}~4)$, then by Lemma \ref{le:2.8}, $r(\widetilde{F})+q =r(\widetilde{G}_{0})=2m(G_{0})-2\kappa(G_{0})+1=2m(F)+q-2\kappa(F)-2+1$. Thus $r(\widetilde{F})=2m(F)-2\kappa(F)-1$, a contradiction to Inequality (2). So $\widetilde{C}_{q}$ satisfies $\sigma(\widetilde{C}_{q})\equiv q~(\mathrm{mod}~4)$.
By Lemma \ref{le:2.8}, $r(\widetilde{H})+q-2=r(\widetilde{G}_{0})=2m(G_{0})-2\kappa(G_{0})+1=2m(H)+q-2\kappa(H)-2+1$. It follows that $r(\widetilde{H})=2m(H)-2\kappa(H)+1$, a contradiction to minimality of order of $\widetilde{G}_0$.
\end{proof}

\noindent\begin{lemma}\label{le:4.16}
Let $\widetilde{G}$ be a connected mixed graph  with $\kappa(G)=c(G)-1~(c(G)\geq 2)$ vertex-disjoint even cycles and $\widetilde{G}$ has no pendant vertices. If $r(\widetilde{G})= 2m(G)-2\kappa(G)+1$, then $\widetilde{G}$ is a connected mixed graph in $\mathfrak{F}_{1}$.
\end{lemma}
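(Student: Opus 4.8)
\medskip
\noindent\textbf{Proof strategy for Lemma~\ref{le:4.16}.}
The plan is to adapt the proof of Lemma~\ref{le:4.11} to the ``$2m-2\kappa+1$'' setting, using Lemmas~\ref{le:4.2}, \ref{le:4.14} and~\ref{le:4.15} in place of Lemmas~\ref{le:4.1}, \ref{le:4.10} and~\ref{le:2.5}. First I split off the easy case: if $\widetilde{G}$ has no pendant even cycle, then since $\widetilde{G}$ has no pendant vertices either, Lemma~\ref{le:4.9}(b) forces $\widetilde{G}$ to be a bicyclic graph of the type in $\mathfrak{Y}_1$ or $\mathfrak{Y}_2$, so $\widetilde{G}\in\mathfrak{F}_1$ and we are done. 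Hence assume $\widetilde{G}$ has a pendant even cycle $\widetilde{C}_q$, with $x$ its unique vertex of degree $3$; the goal becomes to verify that $\widetilde{G}$ satisfies conditions (a)--(c) of the definition of $\mathfrak{F}_0$, of which (a) is part of the hypothesis.

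Next I fix the signature of $\widetilde{C}_q$. Writing $\widetilde{F}=\widetilde{G}-V(C_q)$, a direct matching count (splitting on whether or not the cut edge at $x$ is used) gives $m(G)=m(F)+q/2$, and $\kappa(G)=\kappa(F)+1$ since $C_q$ is a pendant even cycle. If $\sigma(\widetilde{C}_q)\not\equiv q\pmod 4$, then $\sigma(\widetilde{C}_q)$ is odd or $\sigma(\widetilde{C}_q)+q\equiv 2\pmod 4$, so Lemma~\ref{le:2.8} gives $r(\widetilde{G})=r(\widetilde{F})+q$; substituting $r(\widetilde{G})=2m(G)-2\kappa(G)+1$ and the identities above yields $r(\widetilde{F})=2m(F)-2\kappa(F)-1$, contradicting the lower bound in Inequality~(\ref{eq:2}). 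Hence $\sigma(\widetilde{C}_q)\equiv q\pmod 4$ (this is the part of (b) about $\widetilde{C}_q$), and Lemma~\ref{le:4.14} now gives $r(\widetilde{H})=2m(H)-2\kappa(H)+1$, where $\widetilde{H}=\widetilde{G}-V(C_q)+x$, $\kappa(H)=\kappa(G)-1=c(H)-1$, and $x$ is a pendant vertex of $\widetilde{H}$.

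Next I analyse the crucial subgraph $\widetilde{H}_1$ of $\widetilde{H}$. Applying Lemmas~\ref{le:4.3} and~\ref{le:4.4} at each pendant $K_2$ deletion shows that $r-(2m-2\kappa)$ is preserved and $\kappa$ stays fixed (by Lemma~\ref{le:4.3} each deleted quasi-pendant vertex lies off every even cycle), so $r(\widetilde{H}_1)=2m(H_1)-2\kappa(H_1)+1$ with $\kappa(H_1)=\kappa(H)$. By Lemma~\ref{le:4.15}, $\widetilde{H}_1$ must contain an odd cycle, and since neither removing $C_q$ nor a pendant $K_2$ deletion creates a cycle, it contains exactly one, say $\widetilde{C}_t$. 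A case analysis of the components of $\widetilde{H}_1$ (which has no pendant vertices), using Lemmas~\ref{le:4.5}(b), \ref{le:4.6} and~\ref{le:4.9}, rules out components of cyclomatic number at least $2$, so $\widetilde{H}_1$ is a disjoint union of even cycles, the single odd cycle $\widetilde{C}_t$, and possibly isolated vertices. By Lemma~\ref{le:4.5}(b) applied componentwise, $\widetilde{C}_t$ is the component realising the ``$+1$'', whence $r(\widetilde{C}_t)=t$, so by Lemma~\ref{le:2.5} $\widetilde{C}_t$ has even signature; and each even cycle $\widetilde{C}_p$ of $\widetilde{H}_1$ has $r(\widetilde{C}_p)=p-2$, hence $\sigma(\widetilde{C}_p)\equiv p\pmod 4$.

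To finish: $c(H_1)=\kappa(H_1)+1=c(H)=c(G)-1$, so the peeling destroyed no cycle and the cycles of $\widetilde{H}_1$ are precisely the cycles of $\widetilde{G}$ other than $\widetilde{C}_q$; together with $\widetilde{C}_q$ (which meets no other cycle) they are pairwise vertex-disjoint, $\widetilde{C}_t$ is the only odd cycle and has even signature, and every other cycle is even with $\sigma\equiv p\pmod 4$. It then remains to see that $\widetilde{C}_t$ is a \emph{pendant} cycle of $\widetilde{G}$, after which conditions (a), (b), (c) all hold and $\widetilde{G}\in\mathfrak{F}_0\subseteq\mathfrak{F}_1$. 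I expect the main obstacle to be exactly this cluster of structural claims — that $\widetilde{H}_1$ has no component of cyclomatic number at least $2$, that all cycles of $\widetilde{G}$ are genuinely pairwise vertex-disjoint (a $\widetilde{\theta}$-type subgraph has to be excluded by a rank computation using Inequality~(\ref{eq:2})), and that the unique odd cycle is pendant — and I would organise them by an induction on $c(G)$, the base case $c(G)=2$ being handled by Lemmas~\ref{le:4.7}, \ref{le:4.8} and~\ref{le:4.9}, and each inductive step proceeding by deleting the pendant even cycle $\widetilde{C}_q$ and invoking Lemma~\ref{le:4.14}.
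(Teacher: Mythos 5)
Your first half reproduces the paper's argument exactly: the reduction via Lemma \ref{le:4.9}(b) when there is no pendant even cycle, the exclusion of $\sigma(\widetilde{C}_q)$ odd or $\sigma(\widetilde{C}_q)+q\equiv 2\pmod 4$ through Lemma \ref{le:2.8} and the lower bound of Inequality (\ref{eq:2}), the passage to $\widetilde{H}=\widetilde{G}-V(C_q)+x$ (Lemma \ref{le:4.14}), and the transfer of the condition $r=2m-2\kappa+1$ to the crucial subgraph $\widetilde{H}_1$ via Lemmas \ref{le:4.3} and \ref{le:4.4}. The gap is precisely the step you label a ``case analysis of the components of $\widetilde{H}_1$'': the claim that $\widetilde{H}_1$ has no component of cyclomatic number at least $2$ -- equivalently, that the odd cycle survives as a whole component, which is where a $\theta$-type subgraph containing the odd cycles must be excluded (only the \emph{even} cycles are assumed vertex-disjoint in the hypothesis) -- is asserted but never argued, and you yourself defer it, together with the pendant-ness of the odd cycle in $G$, as ``the main obstacle.'' Lemma \ref{le:4.15} only tells you that $\widetilde{H}_1$ contains an odd cycle somewhere; it does not isolate it in its own component, and without that the componentwise use of Lemma \ref{le:4.5}(b) on which your signature conclusions rest cannot start. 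Your proposed substitute, an induction on $c(G)$, is named but not carried out, so the proof is incomplete at its central point.

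The paper closes exactly this gap with a parity device absent from your sketch. Since $r(\widetilde{G})=2m(G)-2\kappa(G)+1$ and $\widetilde{G}$ is connected with $c(G)\geq 2$, Lemma \ref{le:4.6} yields a perfect matching of $G$, so $|V(G)|$ is even and hence $|V(H_1)|$ is odd; applying Lemma \ref{le:4.6} a second time, now to $\widetilde{H}_1$ (no pendant vertices, rank $2m-2\kappa+1$), the impossibility of a perfect matching forces a connected component that \emph{is} an odd cycle $\widetilde{C}_t$. The signature of $\widetilde{C}_t$ is then settled as you indicate (if it were odd, Lemma \ref{le:4.5}(b) would hand the ``$+1$'' to an odd-cycle-free component, contradicting Lemma \ref{le:4.15}), and every remaining component, having rank $2m-2c$, is identified through Theorem \ref{th:1.1} and Lemma \ref{le:4.13} as an even cycle with $\sigma\equiv p\pmod 4$ or an isolated vertex, whence $\widetilde{G}\in\mathfrak{F}_0$. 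To repair your write-up you would need either to import this double use of Lemma \ref{le:4.6} (the parity of $|V(H_1)|$ is the mechanism that kills components of cyclomatic number $\geq 2$ and the intersecting-odd-cycle configurations), or to actually develop the induction on $c(G)$ you gesture at, including an argument for why the unique odd cycle ends up pendant in $G$.
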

\begin{proof} If $\widetilde{G}$ is a connected mixed bicyclic graph in $\mathfrak{F}_{1}\setminus \mathfrak{F}_{0}$, then we are done. If not, since $r(\widetilde{G})= 2m(G)-2\kappa(G)+1$, by Lemma \ref{le:4.9}(b), $\widetilde{G}$ has a pendant even cycle, say $\widetilde{C}_{q}$. Let $x$ be a vertex in $V(C_q)$ with $d_{G}(x)=3$ and $y$ be a vertex adjacent to $x$ in $V(C_q)$. Let $\widetilde{F}=\widetilde{G}-V(C_{q})$ and $\widetilde{H}=\widetilde{F}+x$. By Lemmas \ref{le:2.7} and \ref{le:4.2}, $m(F)=m(G-x)-(q-2)/2=m(G)-q/2$ and $m(H)=m(G-y)-(q-2)/2=m(G)-q/2$.

If $\sigma(\widetilde{C}_{q})$ is odd or $\sigma(\widetilde{C}_{q})+q \equiv 2~(\mathrm{mod}~4)$, then by Lemma \ref{le:2.8}, $r(\widetilde{G})=r(\widetilde{F})+q=2m(G)-2\kappa(G)+1=2m(F)+q-2\kappa(F)-2+1$. Thus $r(\widetilde{F})=2m(F)-2\kappa(F)-1$, a contradiction to Inequality (2). So $\widetilde{C}_{q}$ satisfies $\eta(\widetilde{C}_{q})\equiv q~(\mathrm{mod}~4)$. By Lemma \ref{le:2.8}, $r(\widetilde{G})=r(\widetilde{H})+q-2=2m(G)-2\kappa(G)+1=2m(H)+q-2\kappa(H)-2+1$. It follows that $r(\widetilde{H})=2m(H)-2\kappa(H)+1$. Applying pendant $K_2$ deletion operations on $\widetilde{H}$, we can obtain a crucial subgraph $\widetilde{H}_{1}$ such that $r(\widetilde{H}_{1})=2m(H_1)-2\kappa(H_1)+1$. Since $r(\widetilde{G})= 2m(G)-2\kappa(G)+1$, by Lemma \ref{le:4.6}, $\widetilde{G}$ has a perfect matching. So the order of $H_1$ is odd. By Lemma \ref{le:4.6}, $\widetilde{H}_1$ has a connected component which is an odd cycle, say $\widetilde{C}_{t}$. If $\widetilde{C}_{t}$ has odd signature, then by Lemma \ref{le:2.5}, $r(\widetilde{C}_{t})=2m(C_{t})$. So by Lemma \ref{le:4.5}(b), $\widetilde{H}_{1}$ has a connected component $\widetilde{H}_{2}$~(different from $\widetilde{C}_{t}$) satisfying that $r(\widetilde{H}_{2})=2m(H_{2})-2\kappa(H_{2})+1$. Note that $\widetilde{H}_{2}$ has no odd cycles.  By Lemma \ref{le:4.15}, we obtain a contradiction. So $\widetilde{C}_{t}$ has even signature. Then by Lemma \ref{le:2.5}, $r(\widetilde{C}_{t})=2m(C_{t})+1$. So by Lemma \ref{le:4.5}(b), for each other connected component $\widetilde{H}^{i}$ of $\widetilde{H}_{1}$, $r(\widetilde{H}^{i})=2m(H^{i})-2\kappa(H^{i})=2m(H^{i})-2c(H^{i})$. So each other connected component $\widetilde{H}^{i}$ of $\widetilde{H}_{1}$ satisfies assertions (a) and (b) of Theorem \ref{th:1.1} and the assertion of Lemma \ref{le:4.13}. Then $\widetilde{G}$ is a mixed graph in $\mathfrak{F}_{0}$, as required.
\end{proof}

\noindent\begin{lemma}\label{le:4.17}
Let $\widetilde{G}$ be a connected mixed graph in $\mathfrak{F}_{2}$. Then $r(\widetilde{G})= 2m(G)-2\kappa(G)+1$.
\end{lemma}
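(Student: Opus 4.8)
The plan is to prove this by a case analysis mirroring the structure of the set $\mathfrak{F}_{2} = \mathfrak{F}_{0}\cup \mathfrak{Y}_{1}\cup \mathfrak{Y}_{2}\cup \{\text{mixed odd cycles with even signature}\}$. First I would dispose of the easy base cases: if $\widetilde{G}$ is a mixed odd cycle $\widetilde{C}_q$ with even signature, then $c(G)=1$, $\kappa(G)=0$, $m(G)=\frac{q-1}{2}$, and Lemma \ref{le:2.5} gives $r(\widetilde{G})=q=2m(G)+1=2m(G)-2\kappa(G)+1$. If $\widetilde{G}\in\mathfrak{Y}_{1}$, it is a mixed $\widetilde{\infty}(p,1,q)$ with odd cycle of even signature and even cycle $\widetilde{C}_p$ satisfying $\sigma(\widetilde{C}_p)\equiv p~(\mathrm{mod}~4)$; this is precisely the hypothesis of Lemma \ref{le:4.7}(b), which yields $r(\widetilde{G})=2m(G)-2\kappa(G)+1$. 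Likewise $\widetilde{G}\in\mathfrak{Y}_{2}$ is handled directly by Lemma \ref{le:4.8}(b). So the real content is the case $\widetilde{G}\in\mathfrak{F}_{0}$.

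For $\widetilde{G}\in\mathfrak{F}_{0}$ I would argue as follows. By Remark 2 (and the defining properties of $\mathfrak{F}_{0}$), $\widetilde{G}$ has exactly two pendant cycles and at least one of them is a pendant even cycle $\widetilde{C}_{q}$ with $\sigma(\widetilde{C}_{q})\equiv q~(\mathrm{mod}~4)$. Let $x$ be its unique vertex of degree $3$ and set $\widetilde{H}=\widetilde{G}-V(C_q)+x$, as in the definition. By Lemma \ref{le:2.8} (the case $n$ even, $\sigma(\widetilde{C}_n)\equiv n~(\mathrm{mod}~4)$), we get $r(\widetilde{G})=r(\widetilde{H})+q-2$; and using Lemma \ref{le:2.7} together with the fact that $y$ (a neighbor of $x$ on $C_q$) becomes pendant, $m(G)=m(H)+\frac{q}{2}$, while $\kappa(G)=\kappa(H)+1$ since $C_q$ is an even cycle vertex-disjoint from the others. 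Combining these reduces the claim $r(\widetilde{G})=2m(G)-2\kappa(G)+1$ to the claim $r(\widetilde{H})=2m(H)-2\kappa(H)+1$. By condition (c) in the definition of $\mathfrak{F}_{0}$, a series of pendant $K_2$ deletions turns $\widetilde{H}$ into a crucial subgraph $\widetilde{H}_1$ which is a disjoint union of $c(H)$ even cycles (each, by property (b), satisfying $\sigma\equiv$ order $(\mathrm{mod}~4)$) and one odd cycle with even signature. By Lemma \ref{le:2.5}, each such even cycle $\widetilde{C}_{p}$ contributes rank $p-2=2m(C_p)-2\kappa(C_p)$, and the odd cycle with even signature contributes rank $t=2m(C_t)+1=2m(C_t)-2\kappa(C_t)+1$; by Lemma \ref{le:2.3}(a), $r(\widetilde{H}_1)=2m(H_1)-2\kappa(H_1)+1$. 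Each pendant $K_2$ deletion changes $r$ by $2$, $m$ by $1$, and $\kappa$ by $0$ (Lemmas \ref{le:2.2}, \ref{le:2.7}, and the fact that the deleted vertices lie outside all cycles since the cycles are vertex-disjoint and the deleted edge is a pendant edge), so $r(\widetilde{H})=2m(H)-2\kappa(H)+1$ as well.

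The main obstacle I anticipate is the bookkeeping in the $\mathfrak{F}_{0}$ case — specifically making sure that when we peel off the pendant even cycle $\widetilde{C}_q$ the parameters $m$, $\kappa$, and $c$ transform exactly as claimed, and that the quasi-pendant vertex adjacent to $x$ really does not sit on any cycle of $\widetilde{H}$ (so that $\kappa$ is unaffected by the ensuing pendant $K_2$ deletions). This is where the hypotheses "pairwise vertex-disjoint cycles" and "no pendant vertices in $\widetilde{G}$" are doing their work, and one must invoke them carefully; but once the reductions $r(\widetilde{G})=r(\widetilde{H})+q-2$, $m(G)=m(H)+\frac{q}{2}$, $\kappa(G)=\kappa(H)+1$ are in hand, together with the explicit rank computation for $\widetilde{H}_1$ via Lemmas \ref{le:2.3} and \ref{le:2.5}, the conclusion follows by assembling the pieces. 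I would also double-check the degenerate subcase $c(H)=0$ (so $\widetilde{H}$ reduces to a single odd cycle with even signature plus isolated vertices), which is exactly the odd-cycle base case already treated.
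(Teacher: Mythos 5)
Your handling of the cases $\mathfrak{F}_{2}\setminus\mathfrak{F}_{0}$ (odd cycles with even signature via Lemma \ref{le:2.5}, $\mathfrak{Y}_{1}$ via Lemma \ref{le:4.7}(b), $\mathfrak{Y}_{2}$ via Lemma \ref{le:4.8}(b)) and your rank bookkeeping $r(\widetilde{H})=r(\widetilde{H}_{1})+2k=2m(H)-2\kappa(H)+1$, $r(\widetilde{G})=r(\widetilde{H})+q-2$ coincide with the paper's route. The genuine gap is the step $m(G)=m(H)+\frac{q}{2}$. This does not follow from Lemma \ref{le:2.7} ``together with the fact that $y$ becomes pendant.'' Decomposing any matching of $G$ into its part inside $C_q$ and its part inside $H$ only gives $m(H)+\frac{q}{2}-1\leq m(G)\leq m(H)+\frac{q}{2}$, and the upper value is attained if and only if some maximum matching of $H$ leaves the pendant vertex $x$ unsaturated. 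That is exactly where the hypotheses must be used, not routine bookkeeping: if, say, the part of $H$ around $x$ were an even cycle with a pendant path $x$--$w$ attached (a configuration that still collapses to the cycle under pendant $K_2$ deletions), then every maximum matching of $H$ saturates $x$, $m(G)=m(H)+\frac{q}{2}-1$, and your computation would output $r(\widetilde{G})=2m(G)-2\kappa(G)+3$. So the presence of the odd cycle with even signature in $\widetilde{H}_{1}$ (condition (c)) is indispensable here: one must actually exhibit a maximum matching of $H$ missing $x$, e.g.\ by taking the $k$ deleted pendant edges together with a maximum matching of $\widetilde{H}_{1}$ and rerouting it along an alternating path into the odd cycle, whose attachment vertex can afford to stay unmatched. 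This is precisely what the paper's proof spends its effort on: it constructs a maximum matching $M=M_{1}\cup M_{2}$ of $H$ with $x$ unsaturated and only then concludes $m(G)=m(H)+q/2$. Note also that you cannot shortcut this with Lemma \ref{le:4.2}, since that lemma presupposes the rank identity you are trying to prove.

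A secondary, fixable point: your reason that the $2k$ deleted vertices lie outside all cycles (``the deleted edge is a pendant edge'') is not valid in itself, since a pendant $K_2$ deletion may remove a quasi-pendant vertex that lies on a cycle; the correct justification is condition (c), which forces $\widetilde{H}_{1}$ to retain every cycle of $H$, so no cycle vertex is ever deleted, and hence $\kappa$ is indeed unchanged along the deletions.
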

\begin{proof}If $\widetilde{G}$ is  a connected mixed graph in $\mathfrak{F}_{2}\setminus \mathfrak{F}_{0}$, then by Lemmas \ref{le:2.5}, \ref{le:4.7}(b) and \ref{le:4.8}(b), the result holds. Now assume that $\widetilde{G}$ is a mixed graph in $\mathfrak{F}_{0}$. In property (c) of definition of $\mathfrak{F}_{0}$, assume that switching $\widetilde{H}$ to $\widetilde{H}_{1}$ by $k$-step pendant $K_2$ deletion operations. By Lemma \ref{le:2.7}, $m(H)=m(H_1)+2k$. Let $S$ be the set of deleted vertices by switching $\widetilde{H}$ to $\widetilde{H}_{1}$. Let $F$ be the subgraph of $H$ induced by $S$. Let $M_1$ be a maximum matching of $H_1$.  We can select an appropriate matching $M_2$ of size $k$ from $E(F)$ such that $x$ is $M_2$-unsaturated and edges of $M_1$ and $M_2$ have no common incident vertices. Let $M=M_1\cup M_2$. So $M$ is a maximum matching of $H$. Note that $H=G-V(C_q)+x$. Then $m(G)=m(H)+q/2$ as $x$ is an $M$-unsaturated vertex. By properties (a)-(c) of definition of $\mathfrak{F}_{0}$ and Lemmas \ref{le:2.2}, \ref{le:2.5} and \ref{le:2.7}, $r(H)=2m(H)-2\kappa(H)+1$. Since $\widetilde{C}_{q}$ is an even cycle and $\sigma(\widetilde{C}_{q})\equiv q~(\mathrm{mod}~4)$, by Lemma \ref{le:2.8}, $r(\widetilde{G})=r(\widetilde{H})+q-2=2m(H)-2\kappa(H)+1+q-2=2m(G)-2\kappa(G)+1$.
\end{proof}

\medskip
We are ready to give a proof of Theorem \ref{th:1.4}.

\noindent\textbf{Proof of Theorem \ref{th:1.4}.} \textbf{Sufficiency:} By conditions (a) and (b) and Lemmas \ref{le:2.3}, \ref{le:2.5} and \ref{le:4.17}, $r(\widetilde{G}_{1})=2m(G_1)-2\kappa(G_1)+1$. Assume that switching $\widetilde{G}$ to $\widetilde{G}_{1}$ by $k$-step pendant $K_2$ deletion operations. So by Lemmas \ref{le:2.2} and \ref{le:2.7}, $r(\widetilde{G})=r(\widetilde{G}_{1})+2k=2m(G_1)-2\kappa(G_1)+2k=2m(G)-2\kappa(G)$.

\textbf{Necessity:}
 By Theorem \ref{th:1.3} and Lemma \ref{le:4.5}(b), we can assume that $\widetilde{G}$ is connected. Since $r(\widetilde{G})=2m(G)-2\kappa(G)+1$, by Lemma \ref{le:4.2}, $\kappa(G-u)=\kappa(G)-1$ for any vertex $u$ on any even cycle of $G$. By Lemma \ref{le:2.10},  assertion (a) holds.

 For assertion (b), we apply induction on the order of $G$.  If $c(G)=0$, then by Lemma \ref{le:2.4},  $r(\widetilde{G})=2m(G)$, a contradiction. Thus $c(G)\geq 1$. So  $|V(G)|\geq 3$.  If $|V(G)|= 3$, then  $\widetilde{G}$ is a mixed cycle $\widetilde{C}_{3}$. By Lemma \ref{le:2.5}, $\widetilde{G}$ is a mixed cycle $\widetilde{C}_{3}$ with even signature as $r(\widetilde{G})=2m(G)-2\kappa(G)+1$. Then $\widetilde{G}$ is a mixed graph in $\mathfrak{F}_{2}$. So assertion (b)  holds.
Now assume that $|V(G)|\geq 4$.
If $c(G)=1$, then $\kappa(G)=c(G)-1=0$ and $\widetilde{G}$ is a mixed unicyclic  graph with an
odd cycle.  Since $r(\widetilde{G})=2m(G)-2\kappa(G)+1=2m(G)+1$, by Lemma \ref{le:4.12}(b),  assertion (b) holds.

Suppose that $c(G)\geq 2$. Note that $\mathfrak{F}_{1}\subseteq \mathfrak{F}_{2}$. If $\widetilde{G}$ is a mixed graph in $\mathfrak{F}_{1}$, then we are done.  If not, by Lemma \ref{le:4.16}, $\widetilde{G}$ has at least one pendant vertex, say $x$. Let $y$ be the unique neighbor of $x$ in $G$.
 Let $\widetilde{H}=\widetilde{G}-\{x,y\}$. If  $y$ is on the odd cycle of $G$, then by Lemma \ref{le:4.4},
$r(\widetilde{H})= 2m(H)-2\kappa(H)+1= 2m(H)-2c(H)+1$, a contradiction to Lemma \ref{le:2.12}.
Hence $y$ is not on the odd cycle of $G$. Since $r(\widetilde{G})=2m(G)-2\kappa(G)+1$, by Lemma \ref{le:4.3}, $y$ is not on any even
cycle of $G$. By Lemma \ref{le:4.4}, $r(\widetilde{H})=2m(H)-2\kappa(H)+1$.
Note that $\kappa(G)= \kappa(H)= c(H)-1=c(G)-1$. By induction hypothesis, $\widetilde{H}$ satisfies condition (b). Thus $\widetilde{G}$ satisfies condition (b).  ~~~~~~~~~~~~~~~~~~~~~~~~~~~~~~~~~~~~~~~~~~~~~~~~~~~~~~~~~~~~~~~~~~~~~~~~~~~~\quad $\square$

Now, we give an example that satisfies the equality in Theorem \ref{th:1.4}. Let $\widetilde{G}$ be a  mixed graph with an odd cycle $\widetilde{C}_{3}$ having even signature and each other cycle is $\widetilde{C}_{4}$  with $\sigma(\widetilde{C}_{4})\equiv q~(\mathrm{mod}~ 4)$ shown in Fig. 3.
By Lemmas \ref{le:2.2}, \ref{le:2.5} and \ref{le:2.7}, $r(\widetilde{G})=r(\widetilde{G}-\{u,v\})+2=(4-2)\kappa(G)+3+2=2\kappa(G)+5$ and $m(G)=m(G-\{u,v\})+1=2\kappa(G)+1+1=2\kappa(G)+2$.  Thus $r(\widetilde{G})=2m(G)-2\kappa(G)+1$.

\section{Proof of Theorem \ref{th:1.5}}
\noindent\begin{lemma}\label{le:5.1}
Let $\widetilde{G}$ be a bipartite cycle-disjoint mixed graph of order $n$. If $\widetilde{G}$ is nonsingular, then there exists a perfect matching, for any even cycle $\widetilde{C}_{q}$ with $\sigma(\widetilde{C}_{q})\equiv q~(\mathrm{mod}~4)$, $E_{1}(C_{q}) \cap E_{2}(G)\neq \emptyset$.
\end{lemma}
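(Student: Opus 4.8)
The plan is to prove the statement directly by induction on $n=|V(G)|$, peeling $\widetilde{G}$ down one pendant edge or one ``leaf-block'' cycle at a time and transporting a single perfect matching through each reduction. First I would record a normalisation. Since $G$ is bipartite, Lemma~\ref{le:2.13} gives $m^{\ast}(G)=m(G)$, so the upper bound of Inequality~(\ref{eq:2}) together with $2m^{\ast}(G)\le n$ yields $n=r(\widetilde{G})\le 2m(G)\le n$; hence $m(G)=n/2$, i.e. $n$ is even and $G$ has a perfect matching. Any perfect matching $M$ of $G$ is an optimal fractional matching with $E_{2}(G)=M$, so it suffices to produce a perfect matching $M$ with $M\cap E_{1}(C_{q})\neq\emptyset$ for every even cycle $\widetilde{C}_{q}$ with $\sigma(\widetilde{C}_{q})\equiv q~(\mathrm{mod}~4)$; I will call such a cycle \emph{deficient}, these being exactly the cycles with $r(\widetilde{C}_{q})=q-2$ by Lemma~\ref{le:2.5}. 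For a subgraph $G'\subseteq G$ and a cycle $C$ of $G'$ I write $E_{1}^{G'}(C)$ for the set of edges of $G'$ with exactly one end on $C$, and note the monotonicity $E_{1}^{G'}(C)\subseteq E_{1}(C)$.

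Now the induction, with $n=0$ trivial. If $\widetilde{G}$ is disconnected, each component is nonsingular by Lemma~\ref{le:2.3}, the set $E_{1}(\cdot)$ of a cycle agrees with the one computed in its ambient component, and the union of the matchings given by the induction hypothesis works; so I may assume $\widetilde{G}$ connected. \textbf{Case 1: $\widetilde{G}$ has a pendant vertex.} Let $u$ be pendant with neighbour $v$. By Lemma~\ref{le:2.2}, $r(\widetilde{G}-u-v)=n-2$, so $\widetilde{G}-u-v$ is a smaller nonsingular bipartite cycle-disjoint mixed graph; with $M'$ the matching from induction, set $M=M'\cup\{uv\}$. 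For a deficient cycle $\widetilde{C}_{q}$ of $\widetilde{G}$: if $v\notin V(C_{q})$ then $\widetilde{C}_{q}$ is deficient in $\widetilde{G}-u-v$, so $\emptyset\neq M'\cap E_{1}^{G-u-v}(C_{q})\subseteq M\cap E_{1}(C_{q})$; if $v\in V(C_{q})$ then $uv\in M\cap E_{1}(C_{q})$ because $u\notin V(C_{q})$.

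\textbf{Case 2: $\widetilde{G}$ has no pendant vertex,} hence $\widetilde{G}$ has minimum degree $\ge 2$; since $\widetilde{G}$ is connected and cycle-disjoint, every block is an edge or a cycle and a leaf block that is an edge would contain a pendant vertex, so $\widetilde{G}$ is a single cycle or contains a cycle $\widetilde{C}_{q}$ meeting the rest of $\widetilde{G}$ in exactly one vertex $x$. Here $q$ is even and $G[V(C_{q})]=C_{q}$ (no chords, by cycle-disjointness). If $\widetilde{G}=\widetilde{C}_{q}$, then $r(\widetilde{G})=q$ forces via Lemma~\ref{le:2.5} that $\widetilde{C}_{q}$ is not deficient, so there is no deficient cycle and either perfect matching of the cycle works. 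Otherwise write $\widetilde{H}'=\widetilde{G}-(V(C_{q})\setminus\{x\})$, so that $V(\widetilde{C}_{q})\cap V(\widetilde{H}')=\{x\}$ and $\widetilde{G}$ is their identification at $x$; Lemma~\ref{le:2.8} applies. Were $\widetilde{C}_{q}$ deficient, its second alternative would give $n=r(\widetilde{H}')+q-2$, i.e. $r(\widetilde{H}')=n-q+2=|V(\widetilde{H}')|+1$, impossible. So $\widetilde{C}_{q}$ is not deficient, and the third alternative of Lemma~\ref{le:2.8} gives $n=r(\widetilde{G}-V(C_{q}))+q$, whence $\widetilde{F}:=\widetilde{G}-V(C_{q})$ is nonsingular. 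Taking $M_{F}$ from induction on $\widetilde{F}$ and $M_{C}$ a perfect matching of the even cycle $C_{q}$, the set $M=M_{F}\cup M_{C}$ is a perfect matching of $G$, and every deficient cycle $\widetilde{C}'$ of $\widetilde{G}$ is vertex-disjoint from $C_{q}$, hence deficient in $\widetilde{F}$, so $\emptyset\neq M_{F}\cap E_{1}^{F}(C')\subseteq M\cap E_{1}(C')$. This closes the induction.

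The hard part is really just the dichotomy at a leaf-block cycle $\widetilde{C}_{q}$: a deficient such cycle is incompatible with nonsingularity because the rank identity of Lemma~\ref{le:2.8} forces the auxiliary graph $\widetilde{H}'$ to have rank exceeding its order, whereas a non-deficient such cycle may be deleted in one blow with $\widetilde{G}-V(C_{q})$ staying nonsingular, which strictly shrinks the order while leaving the family of deficient cycles and their external edge sets untouched, so the matching lifts back. The remaining work is bookkeeping: the monotonicity $E_{1}^{G'}(C)\subseteq E_{1}(C)$, and the observation that in Case 1 the newly inserted edge $uv$ itself serves as the required external edge of any deficient cycle through $v$. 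I do not foresee an essential difficulty beyond fitting these pieces together.
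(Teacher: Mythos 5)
Your proof is correct, but it takes a genuinely different route from the paper. The paper argues by contradiction through the coefficient formula of Lemma \ref{le:2.9}: assuming some perfect matching and some even cycle $\widetilde{C}_q$ with $\sigma(\widetilde{C}_q)\equiv q~(\mathrm{mod}~4)$ have $E_1(C_q)\cap E_2(G)=\emptyset$, it groups the spanning basic subgraphs into triples (the cycle $\widetilde{C}_q$ versus its two perfect matchings) whose contributions to $a_{2m}$ cancel, so $a_{2m}=0$, contradicting nonsingularity; in particular the paper's argument is aimed at showing that \emph{every} perfect matching has the stated property. You instead prove the existential form stated in the lemma constructively, by induction on the order: the same normalisation $n=r(\widetilde{G})\le 2m^{\ast}(G)=2m(G)\le n$ via Lemma \ref{le:2.13} and Inequality (\ref{eq:2}), then component reduction via Lemma \ref{le:2.3}, pendant $K_2$ deletion via Lemma \ref{le:2.2}, and at a leaf-block cycle the rank identity of Lemma \ref{le:2.8}, which correctly rules out a ``deficient'' leaf cycle (it would force $r(\widetilde{H}')=|V(H')|+1$) and otherwise transfers nonsingularity to $\widetilde{G}-V(C_q)$ so the matching lifts; your identification of the cycles with $\sigma(\widetilde{C}_q)\equiv q~(\mathrm{mod}~4)$ as exactly those of rank $q-2$ in Lemma \ref{le:2.5} is accurate, and the bookkeeping with $E_1^{G'}(C)\subseteq E_1(C)$ is sound. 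What each approach buys: the paper's cancellation argument is shorter and yields the stronger ``for all perfect matchings'' conclusion in one stroke, while yours avoids the Sachs-coefficient machinery entirely, is in the same structural spirit as the reductions used for Theorems \ref{th:1.3} and \ref{th:1.4}, and gives the extra structural fact that in a nonsingular bipartite cycle-disjoint mixed graph no pendant (leaf-block) cycle can satisfy $\sigma(\widetilde{C}_q)\equiv q~(\mathrm{mod}~4)$. Only trivial loose ends remain (e.g.\ the isolated-vertex case $n=1$, where nonsingularity fails so the claim is vacuous, should be noted before asserting minimum degree at least $2$ in your Case 2), but these do not affect correctness.
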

\begin{proof} Since $\widetilde{G}$ is nonsingular, by Remark 1.8, $2m^{\ast}(G)=n$. Since $G$ is a bipartite graph, by Lemma \ref{le:2.13}, $m^{\ast}(G)=m(G)$. Then $2m(G)= n$. So $G$ has a perfect matching. Suppose that the assertion is false. Then there are a perfect matching and an even cycle $\widetilde{C}_{q}$ with $\sigma(\widetilde{C}_{q})\equiv q~(\mathrm{mod}~4)$ such that $E_{1}(C_{q}) \cap E_{2}(G)= \emptyset$. Let $f(\widetilde{G}, \lambda)=\sum_{i=0}^{n}a_{i}\lambda^{n-i}$ be characteristic polynomial of $\widetilde{G}$.
By Lemma \ref{le:2.9}, $$a_{i}=\sum_{\widetilde{B}} (-1)^{\frac{1}{2}\sigma(\widetilde{B})+\omega(B)}2^{\beta(B)},$$
where the sum runs over all basic subgraphs $\widetilde{B}$ of order $i$ in $\widetilde{G}$.

Let $m=m(G)$. Then $a_{2m}\neq 0$ as $\widetilde{G}$ is nonsingular and $G$ has a perfect matching. Since $E_{1}(C_{q}) \cap E_{2}(G)= \emptyset$, for a given basic subgraph $\widetilde{U}$ of order $2m$, three related basic subgraphs of order $2m$ (including $\widetilde{U}$) emerges at the same time. One of them, say $\widetilde{U}$, contains $\widetilde{C}_{q}$ as a component, and the other two are obtained by replacing $\widetilde{C}_{q}$ with two distinct perfect
matchings of $\widetilde{C}_{q}$. The contributions to $a_{2m}$ of the three basic subgraphs are the following three terms:
$$(-1)^{\frac{1}{2}\sigma(\widetilde{U})+\omega(U)}2^{\beta(U)},~ (-1)^{\frac{1}{2}[\sigma(\widetilde{U})-\sigma(\widetilde{C}_q)]+\omega(U)-1+\frac{q}{2}}2^{\beta(U)-1},~ (-1)^{\frac{1}{2}[\sigma(\widetilde{U})-\sigma(\widetilde{C}_q)]+\omega(U)-1+\frac{q}{2}}2^{\beta(U)-1}.$$
Since $q$ is even and $\sigma(\widetilde{C}_{q})\equiv q~(\mathrm{mod}~4)$, the sum of the above three terms equals zero. Thus $a_{2m}=0$, a contradiction.
\end{proof}

Let $\mathcal{B}_{2m(G)}$ be the set of basic subgraphs of order $2m(G)$ in a mixed graph $\widetilde{G}$.

\noindent\textbf{Proof of Theorem \ref{th:1.5}.} \textbf{Necessity:} Since $\widetilde{G}$ is nonsingular, by Remark 1.8, $m^{\ast}(G)=n/2$. So assertion (a) holds. By Lemma \ref{le:5.1},  assertion (b) holds.

 \textbf{Sufficiency:} We apply induction on the order $n$ of $\widetilde{G}$. If $n=2$, then the result holds trivially.
 Now assume that $n\geq 3$. For any perfect matching of $\widetilde{G}$, if for any even mixed cycle $\widetilde{C}_{q}$ of $\widetilde{G}$,
$E_{1}(C_{q})\cap E_{2}(G)\neq \emptyset$. Let $\alpha(G)$ be the number of perfect matchings of $G$. We claim that $\alpha(G)=1$. If $\alpha(G)\geq 2$, note that
$\widetilde{G}$ is a bipartite cycle-disjoint mixed graph with perfect matching, in this case, there
must exists an even mixed cycle $\widetilde{C}$ of $\widetilde{G}$ and a perfect matching, $E_{1}(C)\cap E_{2}(G)=\emptyset$, a contradiction. By Lemma \ref{le:3.2}, we have $r(\widetilde{G})\geq 2m(G)$. Since $G$ is bipartite, by Lemma \ref{le:2.13}, $m^{\ast}(G)=m(G)$. Then by condition (a) and Inequality (2), $r(\widetilde{G})=2m^{\ast}(G)=n$. So $\widetilde{G}$ is nonsingular.

Otherwise, if there exist an even mixed cycle $\widetilde{C}_{q}$ of $\widetilde{G}$ and a perfect matching of $\widetilde{G}$, $E_{1}(\widetilde{C}_{q})\cap E_{2}(G)=\emptyset$, then by condition (b), either $\sigma(\widetilde{C}_{q})+q\equiv 2~(\mathrm{mod}~4)$ or $\sigma(\widetilde{C}_{q})$ is odd. For any basic subgraph $\widetilde{B}$ of order $2m(G)$ of $\widetilde{G}$, $\widetilde{B}$ contains either $\widetilde{C}_{q}$ or a perfect matching of $\widetilde{C}_{q}$. Let $\widetilde{H}=\widetilde{G}-V(C_q)$.
 By induction hypothesis, $\widetilde{H}$ is nonsingular. So $a_{2m(\widetilde{H})}\neq 0$.   If $\sigma(\widetilde{C}_{q})+q\equiv 2~(\mathrm{mod}~4)$, then
 $$\begin{aligned}
a_{2m(\widetilde{G})}&=\sum_{\widetilde{B}\in \mathcal{B}_{2m(G)}} (-1)^{\frac{1}{2}\sigma(\widetilde{B})+\omega(B)}2^{\beta(B)}\\
&=\sum_{\widetilde{U}\in \mathcal{B}_{2m(H)}} (-1)^{\frac{1}{2}[\sigma(\widetilde{U})+\sigma(\widetilde{C}_q)]+\omega(U)+1}2^{\beta(U)+1}+2(-1)^{\frac{1}{2}\sigma(\widetilde{U})+\omega(U)+\frac{q}{2}}2^{\beta(U)}\\
&=\sum_{\widetilde{U}\in \mathcal{B}_{2m(H)}} (-1)^{\frac{1}{2}\sigma(\widetilde{U})+\omega(U)}2^{\beta(U)}[(-1)^{\frac{1}{2}\sigma(\widetilde{C}_q)+1}\cdot 2+2\cdot (-1)^{\frac{q}{2}}]\\
&=\pm 4a_{2m(\widetilde{H})}\neq 0.
\end{aligned}$$
If $\sigma(\widetilde{C}_{q})$ is odd, then
$$\begin{aligned}
a_{2m(\widetilde{G})}&=\sum_{\widetilde{B}\in \mathcal{B}_{2m(G)}} (-1)^{\frac{1}{2}\sigma(\widetilde{B})+\omega(B)}2^{\beta(B)}\\
&=\sum_{\widetilde{U}\in \mathcal{B}_{2m(H)}} 2(-1)^{\frac{1}{2}\sigma(\widetilde{U})+\omega(U)+\frac{q}{2}}2^{\beta(U)}\\
&=\sum_{\widetilde{U}\in \mathcal{B}_{2m(H)}} (-1)^{\frac{1}{2}\sigma(\widetilde{U})+\omega(U)}2^{\beta(U)}2\cdot (-1)^{\frac{q}{2}}\\
&=\pm 2a_{2m(\widetilde{H})}\neq 0.
\end{aligned}$$
Thus $\widetilde{G}$ is nonsingular.   ~~~~~~~~~~~~~~~~~~~~~~~~~~~~~~~~~~~~~~~~~~~~~~~~~~~~~~~~~~~~~~~~~~~~~~~~~~~~~~~~~~~~~~~~~~~~~~~~~~~~~~~~~~~~\quad $\square$

\begin{figure}[htbp]
\centering
 \includegraphics[scale=1.2]{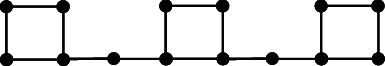}
\caption{An example that satisfies the equality in Theorem \ref{th:1.5}.}
\end{figure}

Now, we give an example that satisfies the conditions in Theorem \ref{th:1.5}. Let $\widetilde{G}$ be a  mixed graph shown in Fig. 4, where  $\widetilde{C}_4$ in the middle satisfies $\sigma(\widetilde{C}_{4})\equiv 0~(\mathrm{mod}~ 4)$ and the remaining two cycles $\widetilde{C}_4$ satisfy $\sigma(\widetilde{C}_{4})\equiv 2~(\mathrm{mod}~ 4)$. Then $\widetilde{G}$ satisfies (a) and (b) of Theorem \ref{th:1.5}.
By Lemmas \ref{le:2.2}, \ref{le:2.5} and \ref{le:2.9}, $r(\widetilde{G})=4+r(\widetilde{G}-V(C_4))=14=|V(G)|$. So $\widetilde{G}$ is nonsingular.

\section{Concluding remarks}

\quad ~In Theorems \ref{th:1.3} and \ref{th:1.4}, we characterize all mixed graphs $\widetilde{G}$~($\kappa(G)=c(G)-1$) with $H$-rank $r(\widetilde{G})=2m(G)-2\kappa(G)$ or $r(\widetilde{G})=2m(G)-2\kappa(G)+1$. When we remove the condition $\kappa(G)=c(G)-1$, these two theorems are incorrect. For example, let $\widetilde{G}$ be a mixed bicyclic graph shown in Fig. 5(a).
Then $\kappa(G)=c(G)-2$ and $\widetilde{G}$ does not satisfy (b) of Theorems \ref{th:1.3} and \ref{th:1.4}. However, $r(\widetilde{G})=2m(G)-2\kappa(G)$ when two cycles $\widetilde{C}_3$ have odd signature and $r(\widetilde{G})=2m(G)-2\kappa(G)+1$ when one cycle $\widetilde{C}_{3}$ has odd signature and the other one has even signature. Note that all mixed graph with $H$-rank $r(\widetilde{G})=2m(G)-2c(G)$ has been identified by Theorem \ref{th:1.1}. In addition, there is no mixed graph $\widetilde{G}$ with $H$-rank $r(\widetilde{G})=2m(G)-2c(G)+1$ by Lemma \ref{le:2.12}. Hence it is interesting to determine all  mixed graphs $\widetilde{G}$ with $H$-rank $r(\widetilde{G})=2m(G)-2\kappa(G)$ or $r(\widetilde{G})=2m(G)-2\kappa(G)+1$ when $0\leq \kappa(G)\leq c(G)-2$.

In Theorem \ref{th:1.5}, our analysis solely focuses on the non-singularity property of all bipartite cycle-disjoint mixed graphs. Nevertheless, conditions (a) and (b) are insufficient to ensure non-singularity for every cycle-disjoint mixed graph. As demonstrated in Fig. 5(b)~($\widetilde{C}_{3}$ on the left has even signature, $\widetilde{C}_{4}$ in the middle satisfies $\sigma(\widetilde{C}_{4})\equiv 0~(\mathrm{mod}~ 4)$ and $\widetilde{C}_{4}$ on the right satisfies $\sigma(\widetilde{C}_{4})\equiv 2~(\mathrm{mod}~ 4)$), the mixed graph $\widetilde{G}$ fulfills conditions (a) and (b); however, its $H$-rank satisfies $r(\widetilde{G})=n-1$. Therefore, the problem concerning the non-singularity of cycle-disjoint mixed graphs deserves further exploration.

\begin{figure}[htbp]
\centering
 \includegraphics[scale=1.2]{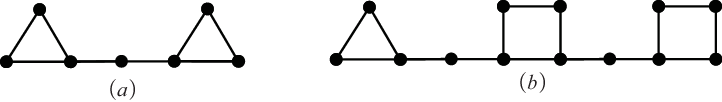}
\caption{Two counterexamples.}
\end{figure}

\medskip
\textbf{Declaration of competing interest}

The authors declare that they have no known competing financial interests or personal relationships that could have appeared to influence the work reported in this paper.

\medskip
\textbf{Date availability}

No date was used for the research described in the article.

\end{document}